\numberwithin{equation}{section}
\newtheorem{theorem}{Theorem}[section]
\newtheorem{lemma}[theorem]{Lemma}
\newtheorem{proposition}[theorem]{Proposition}
\newtheorem{corollary}[theorem]{Corollary}
\newtheorem{question}[theorem]{Question}
\theoremstyle{definition}
\newtheorem{definition}[theorem]{Definition} 
\newtheorem{remark}[theorem]{Remark}
\newtheorem{example}[theorem]{Example}
\newcommand{\K}{\mathbb{K}}
\newcommand{\C}{\mathbb{C}}
\newcommand{\N}{\mathbb{N}}
\newcommand{\R}{\mathbb{R}}
\newcommand{\m}{\mathfrak{m}}
\newcommand{\supp}{\text{supp}}
\newcommand{\tuple}[1]{\langle #1\rangle}
\newcommand{\st}{\colon}
\DeclareMathOperator{\sdefect}{sdefect}
\DeclareMathOperator{\rk}{\text{rank}~}
\DeclareMathOperator{\Ind}{\text{Ind}}
\newcommand{\qand}{\quad \mbox{and} \quad}
\newcommand{\F}{\mathcal{F}}
\newcommand{\Nn}{\mathcal{N}}
\begin{document}
 
\title{Lefschetz properties of squarefree monomial ideals via Rees algebras}

\author{Thiago Holleben}
\address[Thiago Holleben]
{Department of Mathematics \& Statistics,
Dalhousie University,
6316 Coburg Rd.,
PO BOX 15000,
Halifax, NS,
Canada B3H 4R2}
\email{hollebenthiago@dal.ca}

\keywords{Simplicial complexes, Lefschetz properties, Rees algebra, Symbolic powers}
\subjclass[2020]{13E10, 13A30, 13F55, 05E45, 05E40}

 
\begin{abstract}
The theory of Rees algebras of monomial ideals has been extensively studied, and as a consequence, many (sometimes partial) equivalences between algebraic properties of monomial ideals, and combinatorial properties of simplicial complexes and hypergraphs are known. In this paper we show how this theory can be used to find interesting examples in the theory of Lefschetz properties. We explore the consequences of known results from Lefschetz properties to the Rees algebras of squarefree monomial ideals, for example in the calculation of analytic spread. In particular, we show a connection between symbolic powers and $f$-vectors of simplicial complexes. This perspective leads us to a generalization of Postnikov's "mixed Eulerian numbers". We prove the positivity of such numbers in our setting.
\end{abstract}

\maketitle


\section{Introduction}
Rees algebras, a fundamental tool for the algebraic study of singularities, have played a major role in connecting Commutative Algebra to Combinatorics.
When $I$ is a monomial ideal, several combinatorial techniques have been applied to (at least partially) characterize properties of such ideals that are related to their Rees algebras. A common theme in this theory is reducing the problem of checking whether an ideal satisfies a given property to a problem involving linear algebra.   

A place where linear algebra appears prominently is the definition of the weak and strong Lefschetz properties (abbreviated as WLP and SLP).  
The study of Lefschetz properties has several connections to areas such as Geometry and Combinatorics.

In this paper we show how the rich and extensive theory of Rees algebras of (squarefree) monomial ideals manifests itself in the theory of Lefschetz properties.
For a simplicial complex $\Delta$, its Stanley-Reisner ideal $\Nn(\Delta)$, define the artinian reduction
$$ 
    A(\Delta) = \frac{\K[x_1, \dots, x_n]}{\Nn(\Delta) + (x_1^2, \dots, x_n^2)}.
$$
We show the following.

\begin{theorem}\label{t:resultsintro}
    Let $\Delta$ be a pure simplicial complex of dimension $d$, $I$ the facet ideal of $\Delta$ and $A(\Delta)$ as above. Then

    \begin{enumerate}
        \item (\textbf{Linear type and SLP}) If $d = 2$ and $I$ is of linear type, then $A(\Delta)$ has the strong Lefschetz property (SLP) in characteristic zero. If moreover every connected component of $\Delta$ is a $2$-dimensional simplicial tree, then $A(\Delta)$ has the SLP in every odd characteristic.
        \item (\textbf{Symbolic powers and SLP}) If $I$ has at least as many generators as variables (or equivalently, $\Delta$ has at least as many facets as vertices) and $I^{(m)} = I^m$ for every $m \geq 1$, then $A(\Delta)$ fails the SLP in characteristic zero.
        \item (\textbf{Level monomial algebras failing SLP}) If $\Delta$ is the cover complex of a whiskered bipartite graph on $n \geq 5$ vertices, then $A(\Delta)$ fails the SLP in characteristic zero.
    \end{enumerate}
\end{theorem}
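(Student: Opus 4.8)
The unifying idea is to identify the Lefschetz maps on $A(\Delta)$ with incidence maps of $\Delta$ and then to read their ranks off of invariants of the facet ideal $I$. Fix the linear form $L = x_1 + \cdots + x_n$. Because $x_i^2 = 0$ in $A(\Delta)$, multiplication $\times L^k \colon A(\Delta)_i \to A(\Delta)_{i+k}$ is, up to the scalar $k!$, the order-$k$ ``up'' map on the face poset, $x_F \mapsto \sum x_G$ over faces $G \supseteq F$ with $\dim G = \dim F + k$. The decisive instance is $\times L^{d}\colon A(\Delta)_1 \to A(\Delta)_{d+1}$, whose matrix in the vertex and facet bases is $d!$ times the vertex--facet incidence matrix $M$ of $\Delta$; the columns of $M$ are exactly the exponent vectors of the generators of $I$. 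Hence, whenever $d!\neq 0$ in $\K$, $\dim_\K \im(\times L^{d}) = \operatorname{rank}_\K M$, and in characteristic zero $\operatorname{rank}_\K M = \operatorname{rank}_{\mathbb{Q}} M = \dim \mathcal{F}(I) = \ell(I)$, the analytic spread of $I$. Setting up this dictionary precisely --- tracking the factorial scalars, the possible drop of $\operatorname{rank}_\K M$ below $\ell(I)$ in positive characteristic, and the reduction of SLP to finitely many such incidence maps --- is where I expect the main conceptual work to lie.

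For part (2) the dictionary makes the argument immediate. The hypothesis $\mu(I)\ge n$ means $\dim A(\Delta)_{d+1}\ge \dim A(\Delta)_1$, so SLP forces $\times L^{d}\colon A(\Delta)_1\to A(\Delta)_{d+1}$ to be injective, i.e.\ $\operatorname{rank}_\K M = n$. I would contradict this as follows. Since $I$ is squarefree, $I^{(m)}=I^m$ for all $m$ says $I$ is normally torsion-free, so $\operatorname{Ass}(R/I^m)=\operatorname{Min}(I)$ for every $m$; as the minimal primes of $I$ are the minimal vertex covers of the facets (all proper subsets of the variables), $\m\notin\operatorname{Ass}(R/I^m)$ and hence $\depth R/I^m\ge 1$ for all $m$. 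Burch's inequality then gives $\ell(I)\le n-\min_m \depth R/I^m\le n-1$. In characteristic zero this reads $\operatorname{rank}_\K M=\ell(I)\le n-1<n$, so the required injection cannot exist and $A(\Delta)$ fails SLP. The positivity of the generalized mixed Eulerian numbers --- the mixed multiplicities of $\m$ and $I$, equivalently the mixed volumes of the Newton polytope of $I$ --- is the quantitative shadow of this rank computation.

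For part (1), with socle degree $3$, I would reduce SLP to maximal rank of the three incidence maps $A(\Delta)_1\to A(\Delta)_2$ (vertex--edge), $A(\Delta)_2\to A(\Delta)_3$ (edge--facet), and $\times L^2\colon A(\Delta)_1\to A(\Delta)_3$ (vertex--facet). Purity forces each connected component to contain a triangle, so the vertex--edge map is the unsigned incidence matrix of a graph with an odd cycle in every component and therefore has maximal rank in every characteristic $\neq 2$. For the vertex--facet map I invoke the dictionary: linear type gives $\mathcal{R}(I)=\operatorname{Sym}(I)$, hence $\mathcal{F}(I)$ is a polynomial ring and $\ell(I)=\mu(I)$, which yields $\operatorname{rank}_\K M=\mu(I)$ and maximal rank in characteristic zero. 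Positive characteristic is exactly where $\operatorname{rank}_\K M$ may fall below $\ell(I)$; the hypothesis that every component is a $2$-dimensional simplicial tree is what I would use to make $M$ totally unimodular --- building $\Delta$ by gluing facets along free ridges exhibits a triangular $\pm 1$ minor of full size --- so that the rank is unchanged over every odd-characteristic field.

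For part (3) I would exploit the same obstruction on a family engineered to be level. The whiskered graph of a bipartite graph is again bipartite, hence perfect, so its cover ideal --- which is the facet ideal $I$ of the cover complex $\Delta$ --- is normally torsion-free, giving $I^{(m)}=I^m$ for all $m$ and, as in part (2), $\ell(I)\le n-1$; in fact the relations $x_v+x_{w_v}=1$ satisfied by every minimal cover pin $\ell(I)$ well below $n$. The hypothesis $n\ge 5$ is what guarantees that $\min(n,\mu(I))$ strictly exceeds this analytic spread, so that $\times L^{d}$ is not of maximal rank and SLP fails in characteristic zero. The independent point is that $A(\Delta)$ is level: here I would use that whiskering makes the relevant complex vertex-decomposable (so Cohen--Macaulay) and argue that the socle of $A(\Delta)$ concentrates in top degree. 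The delicate step, and the one I expect to absorb most of the effort for this part, is the exact incidence-rank count that turns the qualitative bound on $\ell(I)$ into the sharp threshold $n\ge 5$.
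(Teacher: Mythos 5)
Your dictionary (Lefschetz maps $=$ incidence matrices of $\Delta$, rank in characteristic zero $=$ analytic spread of the facet ideal) is exactly the paper's, and your part (2) is correct and essentially the paper's proof: where the paper cites Morey--Villarreal for the fact that a normally torsion-free squarefree monomial ideal (other than one with $\m$ as a minimal prime) satisfies $\ell(I)\le n-1$, you re-derive it from $\operatorname{Ass}(S/I^m)=\operatorname{Min}(I)$ plus Burch's inequality, which is how that result is proved anyway. The problems are in parts (1) and (3).

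In part (1) you list the three maps to check but never handle the middle one, $\times L\colon A(\Delta)_2\to A(\Delta)_3$, in any characteristic. The paper gets it in characteristic zero from the factorization $M_2M_1=2\log(\F(\Delta))$, so $\rk M_2\ge\rk(M_2M_1)=f_2$ and $f_2\le f_0\le f_1$ forces full rank; your ingredients would support the same argument, but it is absent. In odd characteristic this map is the real issue: the paper must first prove that the edge--facet incidence complex $H_\Delta(1,2)$ of a simplicial forest is again a simplicial forest, and only then apply the unit-minor (gcd of maximal minors $=1$) argument to it. Your triangular $\pm 1$ minor is exhibited only for the vertex--facet matrix, so nothing in your sketch controls this map mod $p$. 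In part (3), the chain ``bipartite, hence perfect, so its cover ideal is normally torsion-free'' is broken: perfectness does \emph{not} imply the cover ideal is normally torsion-free. For $K_3$ (perfect), $J(K_3)=(x_1x_2,x_1x_3,x_2x_3)$ and $x_1x_2x_3\in J^{(2)}\setminus J^2$ --- the paper's own example of symbolic $\neq$ ordinary powers. What the paper uses is that bipartite graphs are unimodular hypergraphs (Berge), and cover ideals of unimodular complexes are normally torsion-free. Moreover, the step you explicitly defer --- a lower bound on the number of generators of $J(w(G))$ so that full rank would mean injectivity --- is precisely where $n\ge 5$ enters: generators of $J(w(G))$ biject with independent sets of $G$, and a bipartite graph on $n$ vertices has at most $n^2/4$ edges, giving at least $1+n+(n-1)=2n$ generators for $n\ge5$. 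Without some such count part (3) is not proved; your alternative bound $\ell(I)\le n+1$ coming from the relations $a_v+a_{w_v}=1$ on exponent vectors is sound and could even lower the threshold, but you carry out neither count. Finally, levelness of $A(\Delta)$ needs only purity of the cover complex, i.e.\ unmixedness of $w(G)$, via Boij's criterion; the detour through vertex decomposability is unnecessary.
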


We note that item $(3)$ in \cref{t:resultsintro} is related to a question of Migliore and Nagel~\cite[Question 4.1]{tour}, which asks which (if any) level monomial algebras fail the WLP or SLP, as it offers a way to construct a large family of level monomial algebras failing the SLP.

Going in the other direction, we study implications of known results from the theory of Lefschetz properties to the theory of (Symbolic) Rees algebras of squarefree monomial ideals. The symbolic defect, defined by Galetto and coauthors~\cite{MR3906569} is a tool to measure the difference between ordinary and symbolic powers of an ideal. 
In~\cite{lefschetzmm} we have shown how the pure skeleta of $\Delta$ affects the SLP of $A(\Delta)$.
In this paper, we develop the notion of symbolic defect polynomials to replicate the role of the symbolic defect, to measure the symbolic defect of a collection of ideals comming from the pure skeleta.
This perspective leads us to a different approach in the study of symbolic powers of squarefree monomial ideals and we show the following connection to the theory of $f$-vectors:
  
\begin{theorem}
    Let $\Delta$ be a $d$-dimensional flag simplicial complex and 
    $$
        \mu_2(\Delta, t) = c_2 t^3 + c_3 t^4 + \dots + c_{d} t^{d + 1}
    $$
    its second symbolic defect polynomial. Then $c_2 = f_2$ and $c_i \geq f_i$ for every $2 \leq i \leq d$, where $f_i$ denotes the number of $i$-faces of $\Delta$. 
    In particular, the coefficients of $\mu_2(\Delta, t)$ form a sequence with no internal zeros.
\end{theorem}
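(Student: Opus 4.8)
The plan is to read each coefficient $c_i$ off the second symbolic power of the facet ideal $I_{i-1} := I(\Delta^{[i-1]})$ of the pure $(i-1)$-skeleton $\Delta^{[i-1]}$ of $\Delta$, and to produce, for every $i$-face $F$ of $\Delta$, a genuinely new minimal generator of $I_{i-1}^{(2)}$ in degree $i+1$. Since $I_{i-1}$ is a squarefree monomial ideal generated in degree $i$, it is radical, so $I_{i-1}^{(2)} = \bigcap_C P_C^2$, where $C$ ranges over the minimal vertex covers of the $(i-1)$-faces of $\Delta$ and $P_C = (x_v : v \in C)$. A squarefree monomial $x_F = \prod_{v \in F} x_v$ therefore lies in $I_{i-1}^{(2)}$ exactly when $|C \cap F| \ge 2$ for every such cover $C$. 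First I would set up this cover criterion and reduce the statement to a count of minimal generators in the single relevant degree $i+1$, which is where the flag hypothesis will let me compare the count against $f_i$.

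Next I would prove the lower bound $c_i \ge f_i$. Fix an $i$-face $F$; its $i+1$ codimension-one subfaces $F \setminus \{v\}$ are $(i-1)$-faces, hence facets of $\Delta^{[i-1]}$. If some minimal cover $C$ met $F$ in at most one vertex, choosing $v \in F$ with $C \cap F \subseteq \{v\}$ would give $C \cap (F \setminus \{v\}) = \emptyset$, contradicting that $C$ covers the facet $F \setminus \{v\}$. Hence $|C \cap F| \ge 2$ for all $C$, so $x_F \in I_{i-1}^{(2)}$, and since $\deg x_F = i+1 < 2i$ it cannot lie in $I_{i-1}^2$. As distinct faces yield distinct squarefree monomials, this exhibits at least $f_i$ new generators in degree $i+1$, giving $c_i \ge f_i$ once their minimality is secured.

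Finally, for the equality $c_2 = f_2$ I would invoke the classical description of the second symbolic power of an edge ideal, $I(G)^{(2)} = I(G)^2 + (x_a x_b x_c : \{a,b,c\} \text{ a triangle of } G)$, applied to $G = \Delta^{[1]}$: its degree-$3$ minimal generators are precisely the triangles of $G$, and the flag hypothesis identifies these with the $2$-faces of $\Delta$, so $c_2 = f_2$ with no over- or under-counting. The ``no internal zeros'' assertion is then immediate: a $d$-dimensional complex has $f_i \ge 1$ for all $0 \le i \le d$ because faces are closed under passing to subsets, so $c_i \ge f_i > 0$ for every $2 \le i \le d$, and in fact no coefficient vanishes at all. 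I expect the main obstacle to be the minimality of the generators $x_F$ for $i \ge 3$ — that is, ruling out that some $x_{F \setminus \{v\}}$ already lies in $I_{i-1}^{(2)}$, which would demote $x_F$ — and, relatedly, pinning down the degree-$3$ part exactly in the base case; the flag condition is precisely what prevents spurious higher cliques or hollow faces from disturbing the count.
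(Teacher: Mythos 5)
Your proposal follows the same route as the paper: for each $i$-face $F$ you show $x_F \in \F(\Delta^{[i-1]})^{(2)} \setminus \F(\Delta^{[i-1]})^2$ via the primary decomposition into cover primes (your vertex-cover criterion is just the complement-side phrasing of the paper's argument with facets of the Stanley--Reisner complex of $\F(\Delta^{[i-1]})$), you use the degree bound $i+1 < 2i$ to exclude membership in the ordinary square, and you combine the edge-ideal description $\F(G)^{(2)} = \F(G)^2 + (x_ax_bx_c : \{a,b,c\}\ \text{a triangle})$ with flagness to get $c_2 = f_2$. All of the steps you actually carry out are correct, including the membership argument via the codimension-one subfaces $F \setminus \{v\}$.

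The step you leave open --- minimality of the $x_F$ as generators of the module $\F(\Delta^{[i-1]})^{(2)}/\F(\Delta^{[i-1]})^2$ --- is, as written, a genuine gap: the symbolic defect is $\mu(I^{(2)}/I^2)$, a count of \emph{minimal} generators, so exhibiting $f_i$ nonzero elements is not yet a lower bound on $c_i$. The paper closes exactly this gap by citing the containment $I^{(2)} \subseteq \m I$ for squarefree monomial ideals (Proposition 3.8 of \cite{MR3589840}): since $\m I$ lives in degrees $\geq i+1$, the ideal $I^{(2)}$ vanishes in degrees $\leq i$, hence $(I^2 + \m I^{(2)})_{i+1} = 0$, and graded Nakayama gives $\mu(I^{(2)}/I^2) \geq \dim_\K (I^{(2)})_{i+1} \geq f_i$. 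Alternatively, you can close it inside your own framework: because $\m I^{(2)}$ is a monomial ideal, $x_F$ fails to be a minimal generator only if some proper monomial divisor of $x_F$ lies in $I^{(2)}$; since $I^{(2)} \subseteq I$ and $I$ is generated in degree $i$, the only candidates are the facets $x_{F\setminus\{v\}}$, and for any facet $G$ and $w \in G$ the set $G\setminus\{w\}$ is independent (the hypergraph is $i$-uniform), so it extends to a maximal independent set $A$ with $w \notin A$, making $C = V \setminus A$ a minimal cover with $C \cap G = \{w\}$ and hence $x_G \notin P_C^2 \supseteq I^{(2)}$. With either patch your argument is complete and coincides with the paper's proof.
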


By applying a result of Hausel~\cite{hausel} on the SLP of level monomial algebras, we are able to show the following:

\begin{theorem}\label{t:analyticspreadintro}
    Let $\Delta$ be a pure simplicial complex of dimension $d$ with $n$ vertices, $S = \K[x_1,\dots, x_n]$
     and $I_i$ the ideal generated by the $i$-faces of $\Delta$, and $\m = (x_1, \dots, x_n)$.
    \begin{enumerate}
        \item For $0 \leq i < d$ the analytic spread of $I_i$ is $n$.
        \item  For every ${\bf a} = (a_0, \dots, a_{d - 1}) \in \N^d$ such that $a_0 + \dots + a_{d - 1} = n - 1$, the mixed multiplicity
        $$
            e_{{\bf a}}(\m | I_1, \dots, I_{d -1})
        $$
        is positive.
    \end{enumerate}
\end{theorem}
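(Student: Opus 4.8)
The plan is to reduce both parts to the Newton-polytope geometry of the face ideals, using that purity makes $A(\Delta)$ level and hence, by Hausel's theorem in characteristic zero, supplies the SLP as a nondegeneracy input. For part (1) I would start from the fact that for a monomial ideal the analytic spread is the Krull dimension of the special fiber ring $\F(I_i) = \bigoplus_m I_i^m/\m I_i^m$, the toric $\K$-algebra generated by the classes of the $i$-face monomials. Since these monomials all have degree $i+1$, their exponent vectors are the $0/1$ indicators of the $i$-faces, all lying on the hyperplane $\{\sum_j x_j = i+1\}$, so $\ell(I_i) = 1 + \dim(\text{affine hull of the } i\text{-face indicators}) \le n$. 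The content is to prove that the affine hull is the whole hyperplane, and this is exactly where purity enters: since $i < d$ each facet has at least $i+2$ vertices and contains all of its $(i+1)$-subsets as faces, so inside a single facet $F$ every $e_a - e_b$ with $a,b \in F$ is a difference of $i$-face indicators; ranging over all facets, and using that each connected component is itself pure of dimension $d$ and so carries an $i$-face, these differences together with one indicator chosen per component span the $(n-1)$-dimensional direction space of the hyperplane. This gives $\ell(I_i) = n$, and I would record the same computation for any product $I_{j_1}\cdots I_{j_s}$, whose Newton polytope is the Minkowski sum and is therefore again full-dimensional in its hyperplane.

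For part (2) the plan is to identify $e_{\bf a}(\m \mid I_1, \dots, I_{d-1})$ with the mixed volume of the Newton polytopes $P_\m, P_1, \dots, P_{d-1}$ taken with multiplicities $a_0, a_1, \dots, a_{d-1}$ summing to $n-1$, where $P_\m$ is the standard simplex and $P_i = \mathrm{conv}\{\text{indicators of } i\text{-faces}\}$; this is the monomial mixed-multiplicity/mixed-volume dictionary of Trung--Verma, and these mixed volumes are precisely the generalized mixed Eulerian numbers of the paper, the $P_i$ being subpolytopes of the hypersimplices. All of these polytopes lie in parallel hyperplanes $\{\sum_j x_j = \mathrm{const}\}$, so I would regard them as bodies in the common $(n-1)$-dimensional direction space $\{\sum_j x_j = 0\}$. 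By part (1) each participating polytope is full-dimensional there, and the mixed volume of $n-1$ full-dimensional bodies in an $(n-1)$-dimensional space is strictly positive; hence the mixed multiplicity is positive. Equivalently, one can run the positivity criterion for mixed multiplicities of non-$\m$-primary ideals in terms of the analytic spreads of sub-products: every $\ell(\prod_{j \in J} I_j) = n$ while $\sum_{j \in J} a_j \le n-1 < n$, so each required inequality is strict and positivity follows.

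The main obstacle I anticipate is the two translations rather than any single hard estimate. The first is the combinatorial claim that purity forces the $i$-face indicators, and the indicators attached to the products $I_{j_1}\cdots I_{j_s}$, to affinely span the full hyperplane, where the disconnected case needs the extra connecting indicator to climb from dimension $n-2$ to $n-1$. The second is correctly invoking the monomial mixed-multiplicity $=$ mixed-volume correspondence for ideals that are \emph{not} $\m$-primary and pinning down the exact positivity criterion, so that the full-dimensionality output of part (1) can be fed in cleanly. I expect Hausel's SLP for the level algebra $A(\Delta)$, which is level because $\Delta$ is pure, to enter here as the characteristic-zero nondegeneracy that licenses reading these mixed multiplicities off the combinatorics, i.e.\ computing them as ranks of products of general forms in $A(\Delta)$ and guaranteeing that the linear systems cutting out the Newton polytopes behave generically. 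That is the step where ``applying Hausel'' does its work, and I would pin it down precisely and confirm that the SLP output is exactly the nondegeneracy the mixed-volume positivity argument consumes.
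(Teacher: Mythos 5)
Your proposal is correct, and for part (1) it takes a genuinely different route from the paper. The paper's proof goes through Lefschetz theory: purity makes $A(\Delta)$ level (\cref{p:purelevel}), Hausel's theorem (\cref{hausel}) gives injectivity of $\times L^{i-1}\colon A(\Delta)_1 \to A(\Delta)_{i}$ in characteristic zero, and \cref{lefschetzmm} together with \cref{rankanalyticspread} converts that injectivity into $\rk \log(\F(\Delta^{(i)})) = n$, i.e.\ $\ell(\F(\Delta^{(i)})) = n$. You instead verify directly that the $i$-face indicator vectors affinely span the hyperplane $\{\sum_j x_j = i+1\}$: inside a single facet every difference $e_a - e_b$ is a difference of two $i$-face indicators (this is exactly where $i<d$ and purity enter), connectivity propagates such differences along edge paths since every edge lies in a facet, and one cross-component difference of indicators per extra component raises the affine dimension to $n-1$. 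This is a complete, elementary replacement for the Hausel step, and it is visibly characteristic-free; what it forgoes is the paper's broader theme that Lefschetz-type injectivity can be \emph{used} to compute analytic spreads. For part (2), your ``equivalently'' argument --- every sub-product of the $I_j$ has analytic spread $n$ (by your Minkowski-sum remark, or by \cref{l:spreadproduct}), so all inequalities in the positivity criterion of \cref{t:positivity} hold since $\sum_{j\in J} a_j \leq n-1$ --- is precisely the paper's proof of \cref{t:posmm}; your mixed-volume alternative via Trung--Verma is also viable for these equigenerated monomial ideals (the paper points to this equivalence in its remarks), using the standard fact that a mixed volume of full-dimensional bodies is positive. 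One correction: your closing paragraph, which expects Hausel's theorem to enter as a ``nondegeneracy'' hypothesis licensing the mixed-multiplicity/mixed-volume dictionary, is a misreading. In the paper Hausel's theorem is used only to prove part (1); neither the Trung--Verma correspondence nor the positivity criterion of \cref{t:positivity} involves any Lefschetz input, and your own part (1) argument makes Hausel unnecessary everywhere --- which is a feature of your approach, not a gap.
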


Item $(2)$ of \cref{t:analyticspreadintro} has natural connections to Combinatorics, and can be seen as a generalization of two out of the $9$ properties shown by Postnikov in~\cite{MR2487491}, on the positivity of "mixed Eulerian numbers". In \cref{s:mvmm} we focus on these connections, and finally in \cref{s:questions} we gather questions that follow from our work.

\section{Preliminaries: Simplicial complexes and hypergraphs}

A \textbf{simplicial complex} $\Delta$ on vertex set $V$ is a collection of subsets of $V$, such that if $\tau \in \Delta$ and $\sigma \subset \tau$, then $\sigma \in \Delta$. Elements in $\Delta$ are called \textbf{faces} and maximal faces are called \textbf{facets}. If the facets of $\Delta$ are $F_1, \dots, F_s$, we write $\Delta = \langle F_1, \dots, F_s \rangle$. The dimension of a face $\tau \in \Delta$ is $\dim \tau = |\tau| - 1$, $0$-dimensional faces are called \textbf{vertices}, $1$-dimensional faces are called \textbf{edges} and $2$-dimensional faces are called \textbf{triangles}. The dimension of $\Delta$ is 
$$
    \dim \Delta = \max (\dim \tau : \tau \in \Delta).
$$
If every facet of $\Delta$ has the same dimension, we say $\Delta$ is a \textbf{pure} simplicial complex. The \textbf{f-vector} of a $d$-dimensional complex $\Delta$ is $(f_{-1}(\Delta), f_0(\Delta), f_1(\Delta), \dots, f_d(\Delta))$, where $f_i(\Delta)$ is the number of $i$-dimensional faces of $\Delta$. We set $f_{-1}(\Delta) = 1$. If there is no confusion, we denote $f_i(\Delta)$ simply by $f_i$.

A specific class of simplicial complexes that was introduced by Faridi in \cite{faridifacet} and will appear in later sections is the following.

\begin{definition}[\textbf{Leaves and simplicial forests}]
    Let $\Delta = \langle F_1, \dots, F_s\rangle$ be a simplicial complex. We say a facet $F$ is a \textbf{leaf} of $\Delta$ if either $F$ is the only facet of $\Delta$, or there exists a facet $G \in \Delta$, $G \neq F$ such that
    $$
        F \cap F' \subseteq F \cap G
    $$
    for every facet $F' \in \Delta$, $F' \neq F$. We say $\Delta$ is a \textbf{simplicial tree} (or a tree) if $\Delta$ is connected and every subcomplex $\Delta' = \langle F_{i_1}, \dots, F_{i_r} \rangle$ has a leaf. If $\Delta$ every connected component of $\Delta$ is a tree, we say $\Delta$ is a \textbf{simplicial forest} (or a forest).
\end{definition}

\begin{center}

\tikzset{every picture/.style={line width=0.75pt}} 

\begin{tikzpicture}[x=0.75pt,y=0.75pt,yscale=-1,xscale=1]

\draw  [fill={rgb, 255:red, 155; green, 155; blue, 155 }  ,fill opacity=1 ] (104.5,130) -- (139.5,162) -- (96,181) -- (66,161) -- (56.5,120) -- (104.5,130) -- (145.5,122) -- (139.5,162) -- cycle ;
\draw    (66,161) -- (104.5,130) ;
\draw    (96,181) -- (104.5,130) ;
\draw  [fill={rgb, 255:red, 155; green, 155; blue, 155 }  ,fill opacity=1 ] (308.5,139) -- (308.5,180) -- (258.5,180) -- (258.5,139) -- cycle ;
\draw    (258.5,139) -- (308.5,180) ;
\draw    (308.5,139) -- (258.5,180) ;
\draw  [fill={rgb, 255:red, 155; green, 155; blue, 155 }  ,fill opacity=1 ] (423.88,148) -- (448.75,179) -- (399,179) -- cycle ;
\draw  [fill={rgb, 255:red, 155; green, 155; blue, 155 }  ,fill opacity=1 ] (448.75,117) -- (473.63,148) -- (423.88,148) -- cycle ;
\draw  [fill={rgb, 255:red, 155; green, 155; blue, 155 }  ,fill opacity=1 ] (473.63,148) -- (498.5,179) -- (448.75,179) -- cycle ;

\draw (53,193) node [anchor=north west][inner sep=0.75pt]   [align=left] {Simplicial tree};
\draw (220,193) node [anchor=north west][inner sep=0.75pt]   [align=left] {Not a simplicial tree};
\draw (385,193) node [anchor=north west][inner sep=0.75pt]   [align=left] {Not a simplicial tree};

\end{tikzpicture}
\end{center}

A (simple) \textbf{$m$-hypergraph} $H = (V, E)$ is a tuple where $V$ is the set of vertices, and $E$ is a collection of subsets of size $m$ of $V$ such that if $e, e' \in E$, then $e \not \subset e'$ and $e' \not \subset e$. Given a pure $d$-dimensional simplicial complex $\Delta = \langle F_1, \dots, F_s \rangle$ on vertex set $V$, we can associate to it the following $d + 1$-hypergraph:
$$
    H = (V, E), \text{ where } E = \{F_1, \dots, F_s\}.
$$
On the other hand, given an $m$-hypergraph $H = (V, \{e_1, \dots, e_r\})$, we can associate to it the simplicial complex $\Delta = \langle e_1, \dots, e_r \rangle$. In particular, we can go back and forth between $m$-hypergraphs and pure simplicial complexes.

\begin{definition}
    Let $H = (V, E)$ be an $m$-hypergraph. The \textbf{incidence matrix} of $H$ is the $|E| \times |V|$ matrix $M(H)$ such that 
    $$
        M(H)_{ij} = \begin{cases}
            1 \ \ \ \text{ if the $i$-th edge of $H$ contains the $j$-th vertex of $H$} \\
            0 \ \ \ \text{ otherwise}
        \end{cases}
    $$
    $H$ is said to be \textbf{unimodular} if every minor of $M(H)$ is $-1, 0$ or $1$.
\end{definition} 

We will need the following particular case of a theorem by Berge:

\begin{definition}
    Let $H = (V, E)$ be an $m$-hypergraph and $r \geq 2$ an integer. A \textbf{cycle} of length $r$ is a sequence $(x_1, E_1, x_2, E_2, x_3, \dots, x_r, E_r, x_1)$ with:
    \begin{enumerate}
        \item $E_1, E_2, \dots, E_r$ are distinct edges of $H$
        \item $x_1, x_2, \dots, x_r$ are distinct vertices of $H$
        \item $x_i, x_{i + 1} \in E_i$ for $i = 1, \dots, r - 1$
        \item $x_r x_1 \in E_r$
    \end{enumerate}
    An \textbf{odd cycle} is a cycle of odd length.
\end{definition}

\begin{theorem}[\cite{MR1013569}]\label{t:unimodular}
    Every $m$-hypergraph without odd cycles is unimodular. In particular, bipartite graphs are unimodular.
\end{theorem}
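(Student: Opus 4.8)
The plan is to show directly that the incidence matrix $M = M(H)$ is totally unimodular, i.e. that every square submatrix $B$ of $M$ has $\det B \in \{-1, 0, 1\}$; this is exactly the definition of unimodularity given above. I would argue by induction on the order $k$ of $B$. For $k = 1$ the entries are $0$ or $1$, so there is nothing to prove. For the inductive step, first dispose of the easy cases: if $B$ has a zero row or zero column then $\det B = 0$, and if some row or some column of $B$ contains exactly one nonzero entry, then cofactor expansion along that line gives $\det B = \pm \det B'$ for a submatrix $B'$ of order $k - 1$, and the inductive hypothesis finishes the job.

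The substantive case is when every row and every column of $B$ has at least two $1$'s. Here I would use the no-odd-cycle hypothesis to produce a linear dependence. Interpreting the rows of $B$ as (restrictions of) edges $E_1, \dots, E_k$ and the columns as vertices $x_1, \dots, x_k$, a walk argument produces a cycle: starting from any vertex, repeatedly pass to a new edge containing it (possible since each column has $\ge 2$ ones) and then to a new vertex of that edge (possible since each row has $\ge 2$ ones), until a vertex repeats. By hypothesis every such cycle has even length, and I would like to convert an even cycle into an explicit vanishing $\pm 1$ combination of the corresponding rows, forcing $\det B = 0$.

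For ordinary graphs this last step is clean and I would present it as the stated special case: when the edges are $2$-element sets, ``at least two $1$'s per row'' forces each chosen edge to have both endpoints among the chosen vertices, so the chosen subgraph has $k$ edges on $k$ vertices with minimum degree $\ge 2$, hence is a disjoint union of cycles; in the bipartite (no odd cycle) case these are all even, the incidence matrix of such a graph has rank strictly less than $k$, and therefore $\det B = 0$. Thus bipartite graphs, having no odd cycle, are unimodular.

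The main obstacle is the hypergraph generalization of this final step: when $m > 2$ an edge may contain several of the chosen vertices, so the clean ``$2$-regular, hence union of even cycles'' reduction breaks down and an alternating combination along a single even cycle need not vanish. To handle this I would instead invoke the Ghouila-Houri criterion, reducing total unimodularity to the existence, for every subcollection of edges, of an equitable $2$-colouring --- a partition into two classes so that at each vertex the two classes differ in incidence by at most one. Constructing such a colouring from the absence of odd cycles, by an alternating assignment propagated along paths and cycles exactly as one $2$-colours a bipartite graph, is the crux of Berge's argument and is where the real work lies; I would expect the verification that the colouring is well defined (independent of the propagation path) to rest precisely on every cycle being even.
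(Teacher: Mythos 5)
The first thing to say is that the paper contains no proof of this statement to compare against: \cref{t:unimodular} is imported wholesale from Berge \cite{MR1013569}, so your proposal can only be judged on its own merits. On those merits, the part you actually complete is correct: for ordinary graphs, in a square submatrix of order $k$ whose rows and columns all contain at least two $1$'s, every selected edge must have both endpoints among the selected vertices, the degree count $2k = \sum \deg \geq 2k$ forces the selected subgraph to be $2$-regular, hence a disjoint union of cycles, all even by the no-odd-cycle hypothesis; the alternating $\pm 1$ combination of the rows around each even cycle vanishes, so the determinant is $0$. This cleanly proves that bipartite graphs are unimodular, and it is worth noting that this special case is the only instance of the theorem the paper ever invokes: in \cref{l:dualntf} the unimodular hypergraph fed into \cref{p:dualntf} is the bipartite graph itself.

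For the theorem as actually stated, about $m$-hypergraphs, there is a genuine gap, and you have located it yourself: after passing to the Ghouila--Houri criterion, the entire content of Berge's theorem is the construction of the required equitable $2$-colourings, and your proposal stops exactly there. Moreover, the analogy you offer for that step --- ``an alternating assignment propagated along paths and cycles exactly as one $2$-colours a bipartite graph'' --- does not transfer. An equitable bicolouring is not a proper $2$-colouring of any evident auxiliary graph: a single hyperedge of size $\geq 3$ already makes the co-occurrence graph on its vertices a triangle, so there is no bipartite structure along which to propagate, and the condition to be met (each edge split as evenly as possible) is strictly stronger than the non-monochromatic condition that such propagation arguments naturally deliver. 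This distinction is not cosmetic: absence of odd cycles does give proper $2$-colourings (this is essentially balancedness of the incidence matrix), but balancedness is well known to be strictly weaker than total unimodularity --- for instance the matrix with rows $(1,1,1,1)$, $(1,1,0,0)$, $(1,0,1,0)$, $(1,0,0,1)$ is balanced yet has determinant $\pm 2$ --- so any argument that only controls monochromatic edges cannot prove the theorem. A correct proof must exploit the hypothesis more finely, e.g.\ by choosing inside each hyperedge a linking structure (a path through its vertices) so that the union of these structures is bipartite, and showing such a choice exists is precisely where the absence of odd cycles does its work; note also that distinct edges may still share two or more vertices (a length-$2$ cycle is even), so the induced subhypergraphs this must handle are genuinely entangled. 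In short: your argument proves the bipartite-graph case, which suffices for the paper's applications, but it does not prove the stated theorem.
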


The property of unimodular graphs that will become useful in later sections is the fact that when $H$ is a unimodular hypergraph, the greatest common divisor of every maximal minor of the incidence matrix of $H$ is either $0$ or $1$. Before moving on, we prove that simplicial forests also have this property.

\begin{lemma}\label{l:lessfacetsforests}
    If $\Delta = \tuple{F_1, \dots, F_s}$ is a $d$-dimensional simplicial forest, then $f_0 \geq f_d$.
\end{lemma}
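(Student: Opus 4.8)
The plan is to induct on the number $s$ of facets of $\Delta$, using the key structural fact that a leaf always carries a \emph{free vertex}, i.e.\ a vertex belonging to no other facet. First I would isolate this free-vertex property. If $F$ is a leaf of $\Delta$ with joint $G$, so that $F \cap F' \subseteq F \cap G$ for every facet $F' \neq F$, then any $v \in F \setminus G$ lies in no other facet: otherwise $v \in F'$ would give $v \in F \cap F' \subseteq G$, a contradiction. Moreover $F \setminus G \neq \emptyset$, since $F$ and $G$ are distinct facets and hence incomparable. If instead $F$ is an isolated facet (a one-facet connected component), then every one of its vertices is free. Thus every leaf of a forest contains at least one free vertex. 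I would also note that the $d$-faces of $\Delta$ are exactly its top-dimensional facets.

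Next I would verify that forests are closed under deleting a facet: if $\Delta' = \tuple{F_j : j \neq i}$, then every subcomplex of $\Delta'$ generated by a subset of facets is also such a subcomplex of $\Delta$, hence has a leaf, so each connected component of $\Delta'$ is again a tree. With this, the induction proceeds as follows. The base case $s = 1$ is immediate: one $d$-face gives $f_0 = d + 1 \geq 1 = f_d$. For $s \geq 2$, pick a leaf $F$ (every forest has one inside some tree component, and a leaf of a component is a leaf of $\Delta$ because cross-component intersections are empty) and set $\Delta' = \tuple{F_j : j \neq i}$. If $F$ is the unique top-dimensional facet, then $f_d = 1 \leq d + 1 \leq f_0$ directly. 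Otherwise deletion of $F$ removes exactly one $d$-face and at least one free vertex, so $f_d(\Delta') = f_d(\Delta) - 1$ while $f_0(\Delta) \geq f_0(\Delta') + 1$; combining with the inductive bound $f_0(\Delta') \geq f_d(\Delta')$ yields $f_0(\Delta) \geq f_d(\Delta)$. (When the chosen leaf has dimension less than $d$, the count $f_d$ is unchanged by the deletion and the inequality passes through even more easily, so one only ever needs the free vertex in the top-dimensional case.)

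The steps carrying the real weight are the free-vertex property of leaves and the closure of forests under facet deletion; the arithmetic of the induction is then routine bookkeeping. I expect the free-vertex lemma to be the conceptual crux, since it is precisely what lets each deletion step "pay for" the facet it removes with a vertex that disappears at the same time, and it is the ingredient that fails for general simplicial complexes (where no leaf need exist). The closure property is standard once one observes that a facet-generated subcomplex of a facet-generated subcomplex is again one of $\Delta$.
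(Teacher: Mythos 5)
Your proof is correct and follows essentially the same route as the paper: delete a leaf, use its free vertex to trade one top-dimensional facet for at least one vertex, and induct down to a single simplex. The only differences are organizational: you prove the free-vertex property of leaves inline (the paper cites Faridi's result) and you absorb the non-pure case into the induction via the observation that deleting a lower-dimensional leaf leaves $f_d$ unchanged, whereas the paper reduces to the pure case by passing to the subcomplex generated by the $d$-faces.
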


\begin{proof}
    We will first show that the lemma holds when $\Delta$ is pure. Every leaf $F_i$ of a forest has a free vertex (see for example~\cite{faridiforest}), that is, a vertex $v$ such that the only facet of $\Delta$ that contains $v$ is $F_i$.
    By removing the facet $F_i$, we have a new simplicial forest $\Delta \setminus F_i = \tuple{F_1, \dots, F_{i - 1}, F_{i + 1}, \dots, F_s}$ where $f_0(\Delta \setminus F_i) \leq f_0(\Delta) - 1$ and $f_d(\Delta \setminus F_i) = f_d(\Delta) - 1$. Repeating this process until the complex is a single simplex $\Delta'$, we see that 
    $$
        d + 1 = f_0(\Delta') \geq f_d(\Delta') = 1
    $$
    and since at each step the number of vertices decreased by at least one, while the number of facets decreased by exactly one, the result follows.

    If $\Delta$ is not pure, then consider the subcomplex $\Delta' = \tuple{G_1, \dots, G_r}$ of $\Delta$, where $G_1, \dots, G_r$ are exactly the $d$-faces of $\Delta$. By the arguments above, and since $\Delta'$ is a pure simplicial forest, we conclude $f_0(\Delta') \geq f_d(\Delta')$. The result then follows by noticing that $f_0(\Delta) \geq f_0(\Delta') \geq f_d(\Delta') = f_d(\Delta)$.
\end{proof}

\begin{proposition}[\textbf{Greatest common divisor of minors of incidence matrix of simplicial forests}]\label{p:forestgcdminors}
    Let $\Delta$ be a simplicial forest. The greatest common divisor of the maximal minors of the incidence matrix of $\Delta$ is $1$.
\end{proposition}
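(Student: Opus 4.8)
The plan is to prove a sharper, more concrete statement: I will exhibit a single maximal minor of $M(\Delta)$ equal to $\pm 1$. Since the greatest common divisor of all maximal minors divides each of them individually, producing one minor equal to $\pm 1$ forces the gcd to be $1$ (and in particular shows the minors are not all zero). Throughout I read $M(\Delta)$ as the facet–vertex incidence matrix, with rows indexed by the facets $F_1,\dots,F_s$ and columns by the vertices, so that a maximal minor is the determinant of the $s\times s$ submatrix cut out by a choice of $s$ columns. This presupposes $s\le f_0$, an inequality that the induction below reproves along the way, so I need not invoke purity or \cref{l:lessfacetsforests} separately.

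The argument is an induction on the number of facets $s$, carried out by peeling leaves. In the base case $s=1$ the complex $\Delta$ is a single simplex, $M(\Delta)$ is one row of $1$'s, and any $1\times 1$ minor equals $1$. For the inductive step I would use that a simplicial forest has a leaf $F_s$, and that every leaf of a forest has a free vertex $v$ (as recalled in the proof of \cref{l:lessfacetsforests}), i.e. a vertex lying in $F_s$ and in no other facet. Removing $F_s$ leaves the complex $\Delta' = \langle F_1,\dots,F_{s-1}\rangle$, whose facet set is exactly $\{F_1,\dots,F_{s-1}\}$ because facets are pairwise incomparable and no new maximal faces can appear. By the inductive hypothesis there is a set $S'$ of $s-1$ vertices of $\Delta'$ for which the corresponding $(s-1)\times(s-1)$ submatrix $A'$ of $M(\Delta')$ satisfies $\det A' = \pm 1$.

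Now I would select in $M(\Delta)$ the columns $S = S' \cup \{v\}$. Because $v$ is free it lies in no facet other than $F_s$, hence $v\notin V(\Delta')$, so $v\notin S'$, $|S|=s$, and the $v$-column has a single nonzero entry: a $1$ in the row of $F_s$ and $0$ in every other row. Since the incidences among $F_1,\dots,F_{s-1}$ and the vertices of $S'$ are identical in $\Delta$ and $\Delta'$, the resulting $s\times s$ submatrix has the block form $\left(\begin{smallmatrix} A' & 0 \\ * & 1 \end{smallmatrix}\right)$; expanding along the last column gives determinant $\pm\det A' = \pm 1$, which completes the induction and hence the proof.

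The proof is essentially structural, with no heavy computation, so the main points requiring care are organizational rather than technical: (i) confirming that $\Delta\setminus F_s$ is again a forest so that the inductive hypothesis applies — this holds because every subcomplex of $\Delta\setminus F_s$ is a subcomplex of $\Delta$ and therefore has a leaf, so each connected component is a tree; and (ii) ensuring that the free vertex $v$ contributes a genuinely new column, disjoint from $S'$ and carrying the clean single-$1$ pattern, which is precisely the defining property of a free vertex of a leaf. I expect (i) to be the only place where one must be slightly attentive to the definition of simplicial forest, while the determinant computation in the inductive step is immediate from the block-triangular shape.
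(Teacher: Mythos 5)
Your proof is correct and is essentially the paper's own argument: the paper fixes a leaf ordering $F_1,\dots,F_s$ of the forest (each $F_i$ a leaf of $\langle F_i,\dots,F_s\rangle$), takes the free vertex $v_i$ of each $F_i$, and observes that the resulting $s\times s$ submatrix of the incidence matrix is triangular with $1$'s on the diagonal --- exactly the data your leaf-peeling induction constructs recursively. The only cosmetic difference is that you obtain the unit minor by a block-determinant expansion at each inductive step instead of exhibiting the full triangular submatrix at once.
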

\begin{proof}
    Let $\Delta = \tuple{F_1, \dots, F_s}$. Since every subcomplex of $\Delta$ of the form $\tuple{F_{i_1},\dots, F_{i_r}}$ is a simplicial forest, we may assume the facets of $\Delta$ are ordered such that $F_i$ is a leaf of $\tuple{F_i, F_{i + 1}, \dots, F_s}$. In particular, for every $i$, $F_i$ contains a vertex $v_i$ such that if $v_i \in F_j$, then $j \leq i$. 

    By \cref{l:lessfacetsforests}, a maximal minor of the incidence matrix of $\Delta$ corresponds to picking a set of $s$ vertices of $\Delta$. Consider the maximal minor where the selected vertices are exactly $v_1, \dots, v_s$. Then by the definition of $v_1, \dots, v_s$, after reordering the columns, this submatrix is an upper triangular matrix where every diagonal entry is $1$, in particular the incidence matrix of $\Delta$ has at least one maximal minor equal to $1$, so the gcd of maximal minors is also $1$.
\end{proof}

Given a simplicial complex $\Delta$ on vertex set $[n] = \{1,\dots, n\}$, we can associate to it two different ideals.

\begin{definition}[\textbf{Stanley-Reisner and Facet ideals}] 
    Let $\Delta = \langle F_1, \dots, F_s\rangle$ be a simplicial complex on vertex set $[n]$, and $S = \K[x_1, \dots, x_n]$. For $\tau \subset [n]$, define $x_\tau = \prod_{j \in \tau} x_j$.

    \begin{enumerate}
        \item The \textbf{facet ideal} of $\Delta$ is the ideal $\F(\Delta) = (x_{F_1}, \dots, x_{F_s}) \subset S$. Given a squarefree monomial ideal $I \subset S$, the simplicial complex $\Delta$ such that $I = \F(\Delta)$ is called the \textbf{facet complex} of $I$.
        \item The \textbf{Stanley-Reisner ideal} of $\Delta$ is the ideal $\Nn(\Delta) = (x_\tau : \tau \not \in \Delta) \subset S$. Given a squarefree monomial ideal $I \subset S$, the simplicial complex $\Delta$ such that $I = \Nn(\Delta)$ is called the \textbf{Stanley-Reisner complex} of $I$.
    \end{enumerate}
\end{definition}

\begin{example}\label{e:stanleyfacet}
    Let $I = (ab, bc, ad, be, cf) \subset \K[a,b,c,d,e,f]$. Then    
\begin{center}
    
\tikzset{every picture/.style={line width=0.75pt}} 

\begin{tikzpicture}[x=0.75pt,y=0.75pt,yscale=-1,xscale=1]

\draw    (266.5,88) -- (314.5,129) ;
\draw [shift={(314.5,129)}, rotate = 40.5] [color={rgb, 255:red, 0; green, 0; blue, 0 }  ][fill={rgb, 255:red, 0; green, 0; blue, 0 }  ][line width=0.75]      (0, 0) circle [x radius= 3.35, y radius= 3.35]   ;
\draw [shift={(266.5,88)}, rotate = 40.5] [color={rgb, 255:red, 0; green, 0; blue, 0 }  ][fill={rgb, 255:red, 0; green, 0; blue, 0 }  ][line width=0.75]      (0, 0) circle [x radius= 3.35, y radius= 3.35]   ;
\draw    (314.5,129) -- (357.5,87) ;
\draw [shift={(357.5,87)}, rotate = 315.67] [color={rgb, 255:red, 0; green, 0; blue, 0 }  ][fill={rgb, 255:red, 0; green, 0; blue, 0 }  ][line width=0.75]      (0, 0) circle [x radius= 3.35, y radius= 3.35]   ;
\draw [shift={(314.5,129)}, rotate = 315.67] [color={rgb, 255:red, 0; green, 0; blue, 0 }  ][fill={rgb, 255:red, 0; green, 0; blue, 0 }  ][line width=0.75]      (0, 0) circle [x radius= 3.35, y radius= 3.35]   ;
\draw    (266.5,88) -- (267.5,152) ;
\draw [shift={(267.5,152)}, rotate = 89.1] [color={rgb, 255:red, 0; green, 0; blue, 0 }  ][fill={rgb, 255:red, 0; green, 0; blue, 0 }  ][line width=0.75]      (0, 0) circle [x radius= 3.35, y radius= 3.35]   ;
\draw [shift={(266.5,88)}, rotate = 89.1] [color={rgb, 255:red, 0; green, 0; blue, 0 }  ][fill={rgb, 255:red, 0; green, 0; blue, 0 }  ][line width=0.75]      (0, 0) circle [x radius= 3.35, y radius= 3.35]   ;
\draw    (357.5,154) -- (357.5,87) ;
\draw [shift={(357.5,87)}, rotate = 270] [color={rgb, 255:red, 0; green, 0; blue, 0 }  ][fill={rgb, 255:red, 0; green, 0; blue, 0 }  ][line width=0.75]      (0, 0) circle [x radius= 3.35, y radius= 3.35]   ;
\draw [shift={(357.5,154)}, rotate = 270] [color={rgb, 255:red, 0; green, 0; blue, 0 }  ][fill={rgb, 255:red, 0; green, 0; blue, 0 }  ][line width=0.75]      (0, 0) circle [x radius= 3.35, y radius= 3.35]   ;
\draw    (314.5,129) -- (315.5,179) ;
\draw [shift={(315.5,179)}, rotate = 88.85] [color={rgb, 255:red, 0; green, 0; blue, 0 }  ][fill={rgb, 255:red, 0; green, 0; blue, 0 }  ][line width=0.75]      (0, 0) circle [x radius= 3.35, y radius= 3.35]   ;
\draw [shift={(314.5,129)}, rotate = 88.85] [color={rgb, 255:red, 0; green, 0; blue, 0 }  ][fill={rgb, 255:red, 0; green, 0; blue, 0 }  ][line width=0.75]      (0, 0) circle [x radius= 3.35, y radius= 3.35]   ;
\draw  [fill={rgb, 255:red, 155; green, 155; blue, 155 }  ,fill opacity=1 ] (86,119) -- (171,119) -- (171,204) -- (86,204) -- cycle ;
\draw  [fill={rgb, 255:red, 155; green, 155; blue, 155 }  ,fill opacity=1 ] (128.75,63) -- (171.5,119) -- (86,119) -- cycle ;
\draw    (86,119) -- (171,204) ;
\draw    (171,119) -- (86,204) ;

\draw (123,42) node [anchor=north west][inner sep=0.75pt]   [align=left] {$\displaystyle b$};
\draw (131,155) node [anchor=north west][inner sep=0.75pt]   [align=left] {$\displaystyle e$};
\draw (173,99) node [anchor=north west][inner sep=0.75pt]   [align=left] {$\displaystyle d$};
\draw (73,102) node [anchor=north west][inner sep=0.75pt]   [align=left] {$\displaystyle f$};
\draw (73,202) node [anchor=north west][inner sep=0.75pt]   [align=left] {$\displaystyle a$};
\draw (173,200) node [anchor=north west][inner sep=0.75pt]   [align=left] {$\displaystyle c$};
\draw (253,65) node [anchor=north west][inner sep=0.75pt]   [align=left] {$\displaystyle a$};
\draw (310,106) node [anchor=north west][inner sep=0.75pt]   [align=left] {$\displaystyle b$};
\draw (365,69) node [anchor=north west][inner sep=0.75pt]   [align=left] {$\displaystyle c$};
\draw (261,157) node [anchor=north west][inner sep=0.75pt]   [align=left] {$\displaystyle d$};
\draw (311,185) node [anchor=north west][inner sep=0.75pt]   [align=left] {$\displaystyle e$};
\draw (351,160) node [anchor=north west][inner sep=0.75pt]   [align=left] {$\displaystyle f$};
\draw (51,230) node [anchor=north west][inner sep=0.75pt]   [align=left] {\begin{minipage}[lt]{98.54pt}\setlength\topsep{0pt}
\begin{center}
The Stanley-Reisner \\complex of $\displaystyle I$
\end{center}

\end{minipage}};
\draw (250,230) node [anchor=north west][inner sep=0.75pt]   [align=left] {\begin{minipage}[lt]{104.08pt}\setlength\topsep{0pt}
\begin{center}
The facet complex of $\displaystyle I$
\end{center}

\end{minipage}};

\end{tikzpicture}
\end{center}
\end{example}
 
Let $I$ be an ideal of $S = \K[x_1, \dots, x_n]$. The ring $S[It] = \oplus_{i \in \N} I^i t^i \subset S[t]$ is called the \textbf{Rees ring} of $I$. Let $\m$ be the maximal homogeneous ideal of $S$. The Krull dimension of $S[It]/\m S[It]$, denoted by $\ell(I)$, is called the \textbf{analytic spread} of $I$. 

Given a monomial ideal $I = (x_1^{a_{1,1}}\dots x_n^{a_{1,n}}, \dots, x_1^{a_{s, 1}} \dots x_n^{a_{s, n}})$ generated by monomials of degree $t$, the \textbf{log-matrix} of $I$ is 
$$
    \log(I) = \begin{pmatrix}
        a_{1,1} & \dots &  a_{1,n} \\
        \vdots  & \ddots & \vdots \\
        a_{s, 1} & \dots & a_{s, n}
    \end{pmatrix}
$$
that is, rows of $\log(I)$ are the exponents of the generators of $I$. The following standard result relates the rank of $\log(I)$ and $\ell(I)$.

\begin{theorem}[\cite{rankanalyticspread}]\label{rankanalyticspread}
    Let $I \subset S$ be a monomial ideal generated by monomials of degree $d$. Then 
    $$
        \ell(I) = \rk \log(I).
    $$
\end{theorem}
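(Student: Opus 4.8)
The plan is to identify the special fiber ring $\mathcal{F}(I) := S[It]/\m S[It]$, whose Krull dimension is by definition $\ell(I)$, with the monomial subalgebra $\K[m_1,\dots,m_s] \subseteq S$ generated by the minimal monomial generators $m_1,\dots,m_s$ of $I$ (all of degree $d$), and then to compute the dimension of that subalgebra through the rank of its exponent matrix.

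First I would write $\mathcal{F}(I) = \bigoplus_{i\geq 0} I^i/\m I^i$ and consider the $\K$-algebra surjection $\K[y_1,\dots,y_s] \twoheadrightarrow \mathcal{F}(I)$ sending $y_j$ to the class of $m_j t$; it factors through the subalgebra $\K[m_1,\dots,m_s]$. The decisive step, and the only place the equal-degree hypothesis is used, is showing this map is an isomorphism. Since $I^i$ is generated in degree $id$, every monomial of $I^i$ has degree at least $id$, so every monomial of $\m I^i$ has degree at least $id+1$. A product $m_{j_1}\cdots m_{j_i}$ has degree exactly $id$, hence cannot lie in $\m I^i$ and has nonzero image in $I^i/\m I^i$. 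As distinct monomials are $\K$-linearly independent modulo $\m I^i$, the degree-$i$ piece of $\mathcal{F}(I)$ has as a basis the distinct monomials arising as products of $i$ generators, which is exactly the degree-$i$ piece of $\K[m_1,\dots,m_s]$; thus $\mathcal{F}(I) \cong \K[m_1,\dots,m_s]$.

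Next I would compute $\dim \K[m_1,\dots,m_s]$. Being a subring of the domain $S$ that is finitely generated over $\K$, its Krull dimension equals $\operatorname{trdeg}_\K \K(m_1,\dots,m_s)$. I would then invoke the standard fact that monomials $x^{a_1},\dots,x^{a_s}$ are algebraically independent over $\K$ if and only if the exponent vectors $a_1,\dots,a_s$ are $\mathbb{Q}$-linearly independent. Consequently the transcendence degree equals the maximal number of $\mathbb{Q}$-linearly independent exponent vectors, i.e.\ $\dim_{\mathbb{Q}} \operatorname{span}_{\mathbb{Q}}(a_1,\dots,a_s)$. Since the rows of $\log(I)$ are precisely $a_1,\dots,a_s$, this quantity is $\rk \log(I)$, and combining these equalities gives $\ell(I) = \dim \mathcal{F}(I) = \rk \log(I)$.

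The main obstacle is the isomorphism $\mathcal{F}(I)\cong \K[m_1,\dots,m_s]$ in the second paragraph---more precisely, the verification that no product of $i$ generators lands in $\m I^i$, which fails without the hypothesis that all generators share a common degree $d$ and is exactly what forces the two gradings to match. The algebraic-independence $\Longleftrightarrow$ linear-independence lemma used in the dimension count is classical (provable via the logarithmic derivative / Jacobian criterion, or through the structure of Laurent monomial group rings), so I would cite it rather than reprove it.
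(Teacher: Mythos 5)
The paper offers no proof of this theorem at all --- it is imported verbatim from \cite{rankanalyticspread} --- so there is no internal argument to compare yours against; judged on its own terms, your proposal is correct, and it is in essence the standard proof of this fact. Both halves check out: since $I$ is generated in the single degree $d$, every product $m_{j_1}\cdots m_{j_i}$ has degree exactly $id$ while every monomial of $\m I^i$ has degree at least $id+1$, and since the monomials of $I^i$ not lying in $\m I^i$ form a $\K$-basis of $I^i/\m I^i$, that basis is precisely the set of distinct products of $i$ generators; this identifies the special fiber $S[It]/\m S[It]$ with the semigroup ring $\K[m_1,\dots,m_s]$, whose dimension is then the transcendence degree, computed via the classical criterion that monomials are algebraically independent exactly when their exponent vectors are $\mathbb{Q}$-linearly independent. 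One presentational remark: rather than asserting that $\K[y_1,\dots,y_s]\twoheadrightarrow S[It]/\m S[It]$ ``factors through'' $\K[m_1,\dots,m_s]$ (a claim that itself already requires the equal-degree hypothesis to verify), it is cleaner to note that the kernels of the two surjections out of $\K[y_1,\dots,y_s]$ coincide: in either quotient, an element $\sum_a c_a y^a$ maps to zero exactly when, for each monomial $u \in S$, the coefficients over the fiber $\{a : m^a = u\}$ sum to zero --- your basis computation is exactly the content of this statement. A feature of your argument worth making explicit is that it shows $\ell(I)$ is independent of the characteristic of $\K$ and equals the rank of $\log(I)$ taken over $\mathbb{Q}$; this is the reading the paper depends on, for instance in \cref{lefschetzmm}, where the equivalence between full rank of an incidence matrix over the base field and an analytic spread condition is asserted only in characteristic zero.
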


\Cref{rankanalyticspread} gives us a way of studying the rank of matrices using Rees algebras and vice-versa. The following family of ideals will play an important role in future sections.

\begin{definition}
    An ideal $I \subset S$ is said to be of \textbf{linear type} if the Rees algebra $S[It]$ of $I$ is isomorphic to the symmetric algebra $\text{Sym}(I)$ of $I$, or equivalently, $S[It]$ is defined as an $S[w_0,\dots,w_s]$-algebra by linear polynomials on $w_0,\dots, w_s$. 
\end{definition}

\begin{lemma}\label{l:lineartype}
    If an ideal $I \subset S$ is of linear type, and is minimally generated by $s$ elements, then $\ell(I) = s$.
\end{lemma}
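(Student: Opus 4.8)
The plan is to compute $\ell(I)$ straight from its definition as the Krull dimension of the special fiber ring $F(I) = S[It]/\m S[It]$, using the linear type hypothesis to swap the Rees ring for the symmetric algebra, which is much easier to base change along $S \to S/\m = \K$.

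First I would record that, since $I$ is of linear type, the canonical surjection $\text{Sym}(I) \twoheadrightarrow S[It]$ is an isomorphism, so computing $F(I)$ is the same as computing $\text{Sym}_S(I) \otimes_S \K$. Fixing a minimal generating set $f_1, \dots, f_s$ of $I$ yields a presentation $\text{Sym}(I) = S[w_1, \dots, w_s]/J$, where $J$ is generated by the linear forms $\sum_i g_i w_i$ coming from the syzygies $\sum_i g_i f_i = 0$ of the $f_i$ — these are exactly the "linear polynomials in the $w_i$" from the definition of linear type. The key step is then the reduction modulo $\m$: because $f_1, \dots, f_s$ is a \emph{minimal} generating set, (graded) Nakayama forces every syzygy to have all its coefficients $g_i$ in $\m$, so each generator of $J$ lies in $\m\,S[w_1, \dots, w_s]$ and dies after reducing modulo $\m$, giving $\bar{J} = 0$. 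Equivalently, and more conceptually, the symmetric algebra commutes with base change, so
$$
F(I) \cong \text{Sym}_S(I) \otimes_S \K \cong \text{Sym}_\K(I/\m I) \cong \K[w_1, \dots, w_s],
$$
where the last isomorphism uses that $I/\m I$ is an $s$-dimensional $\K$-vector space, again by minimality. Since $\K[w_1, \dots, w_s]$ has Krull dimension $s$, I conclude $\ell(I) = s$.

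The only point needing genuine care is the vanishing $\bar{J} = 0$, equivalently that $\dim_\K I/\m I = s$: this is precisely where the word "minimally" in the hypothesis is essential, since without it one would only obtain $\ell(I) \le s$. Everything else — the identification of $\ell(I)$ with $\dim F(I)$, the compatibility of $\text{Sym}$ with base change, and the dimension of a polynomial ring — is formal, so I expect this minimality bookkeeping to be the one place that must be stated explicitly rather than waved through.
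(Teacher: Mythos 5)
Your proof is correct. It is worth noting that the paper does not actually prove this lemma --- it is stated without proof immediately after the definition of linear type, as a known fact --- so there is no argument in the paper to compare yours against; what you give is the standard justification, and it is complete: linear type identifies the special fiber $S[It]/\m S[It]$ with $\text{Sym}_S(I)\otimes_S \K \cong \text{Sym}_\K(I/\m I)$, minimality gives $\dim_\K I/\m I = s$, so the fiber is a polynomial ring in $s$ variables and $\ell(I) = s$.

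One point you should state explicitly rather than leave inside the parenthetical ``(graded)'': the Nakayama step needs $I$ to be homogeneous, so that syzygies of a minimal generating set have all coefficients in the maximal homogeneous ideal $\m$. This is harmless here, since the paper only ever applies the lemma to monomial ideals, but the lemma as literally written (``an ideal $I \subset S$'') fails without homogeneity. For instance, $I = (x+1,\, y+1) \subset \K[x,y]$ is generated by a regular sequence, hence is of linear type and minimally $2$-generated, yet its Koszul syzygy gives the defining relation $(y+1)w_1 - (x+1)w_2$ of $\text{Sym}(I)$, which survives reduction modulo $\m = (x,y)$ as $w_1 - w_2$; the special fiber is then $\K[w_1,w_2]/(w_1 - w_2)$, of dimension $1 \neq 2$. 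So the hypotheses doing real work are minimality \emph{and} homogeneity --- your appeal to graded Nakayama uses both, and only the first is flagged in your closing discussion.
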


\begin{example}\label{e:notlineartype}
    Let $I = (aec, cde, def, aef, bdf) \subset S = \K[a,b,c,d,e,f]$ be the facet ideal of the complex in \cref{e:stanleyfacet}. The Rees algebra of $I$ is isomorphic to 
    $$
        S[It] \cong \frac{S[w_0,\dots, w_4]}{(aw_3 - dw_4, cw_1 - fw_4, cw_0 - fw_3, bw_0 - e w_2, a w_0 - dw_1, w_1 w_3 - w_0 w_4)}.
    $$
    In particular, $w_1 w_3 - w_0 w_4$ is not linear on $w_0,\dots, w_4$ and therefore $I$ is not of linear type. As we will see in~\cref{e:wslp}, $\ell(I) < 5$.
\end{example}

\section{Lefschetz properties}

Let $\K$ be an infinite field, $S = \K[x_1, \dots, x_n]$ and $I$ a homogeneous ideal of $S$ such that $A = S/I$ is an artinian graded algebra. The set $\text{soc}(A) = \{f : f A_1 = 0\}$ is called the \textbf{socle} of $A$. If $\text{soc}(A) = A_d$, where $d = \max\{i : A_i \neq 0\}$, then $A$ is said to be \textbf{level} and the integer $d$ is called the \textbf{socle degree} of $A$.

\begin{definition}
    Let $L \in S_1$ be a general linear form. The algebra $A$ is said to satisfy the \textbf{weak Lefschetz property} (WLP) if the multiplication maps $\times L: A_i \to A_{i + 1}$ have maximal rank for every $i = 0, \dots, d - 1$. 
    
    We say $A$ satisfies the \textbf{strong Lefschetz Property} (SLP) if the maps $\times L^j: A_i \to A_{i + j}$ have full rank for every $i, j$.
\end{definition}

When the algebra $A$ is defined by a monomial ideal $I \subset S$, we have the following very useful lemma.

\begin{lemma}[\cite{wlpmon}, Proposition 2.3]
    Let $I \subset S = \K[x_1, \dots, x_n]$ be a monomial ideal such that $A = S/I$ is an artinian algebra and $\K$ an infinite field. Then $A$ has the WLP (resp. SLP) if and only if the multiplication maps by $L = x_1 + \dots + x_n$ (resp. powers of $L$) have full rank.
\end{lemma}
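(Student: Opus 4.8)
The plan is to exploit the torus action on $S$ that stabilizes every monomial ideal, reducing the generic linear form in the definition to the specific form $L = x_1 + \cdots + x_n$. For each $t = (t_1,\dots,t_n) \in (\K^*)^n$, the assignment $x_i \mapsto t_i x_i$ defines a graded algebra automorphism $\phi_t$ of $S$, and since $I$ is monomial it is fixed by $\phi_t$; hence $\phi_t$ descends to a graded automorphism of $A = S/I$. The first step is to record the resulting relation on multiplication maps: for a linear form $\ell = \sum_i a_i x_i$ one has $\phi_t(\ell \cdot f) = \phi_t(\ell)\,\phi_t(f)$ with $\phi_t(\ell) = \sum_i t_i a_i x_i$, so the square relating $\times \ell$ and $\times \phi_t(\ell)$ through the isomorphisms $\phi_t \colon A_i \to A_i$ and $\phi_t \colon A_{i+1} \to A_{i+1}$ commutes. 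Consequently the maps $\times \ell \colon A_i \to A_{i+1}$ and $\times \phi_t(\ell) \colon A_i \to A_{i+1}$ have equal rank, and the same computation applied to $\ell^{\,j}$ (using $\phi_t(\ell^{\,j}) = \phi_t(\ell)^j$) handles the strong property simultaneously for all $j$.

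From this I conclude that the rank of each multiplication map depends only on the torus orbit of the coefficient vector $(a_1,\dots,a_n)$. In particular, any linear form all of whose coefficients are nonzero lies in the orbit of $(1,\dots,1)$ (take $t_i = a_i$), so it yields exactly the same ranks as $L = x_1 + \cdots + x_n$ for every pair $(i,j)$ at once.

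Next I would interpret \emph{general} in the standard way: the locus of coefficient vectors for which every relevant map $\times \ell$ (resp.\ $\times \ell^{\,j}$) attains its maximal possible rank is the complement of the vanishing locus of certain minors, hence a Zariski-open subset $U \subseteq \K^n$. By definition $A$ has the WLP (resp.\ SLP) precisely when $U$ is nonempty. Since $\K$ is infinite, $\K^n$ is irreducible and the set $(\K^*)^n$ of vectors with all coordinates nonzero is a nonempty Zariski-open (hence dense) subset; therefore a nonempty $U$ must meet $(\K^*)^n$. Picking $a \in U \cap (\K^*)^n$ and applying the orbit invariance above, $(1,\dots,1) \in U$, i.e.\ $L = x_1 + \cdots + x_n$ realizes all the maximal ranks. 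The converse is immediate: if $L = x_1 + \cdots + x_n$ already gives maximal rank then $(1,\dots,1) \in U$, so $U$ is nonempty and a general linear form works.

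The main obstacle is bookkeeping rather than a deep difficulty: one must make precise that ``maximal rank for a general $L$'' is equivalent to nonemptiness of the single open set $U$, and verify the commuting-square identity for all $i$ (and all $j$ in the SLP case) at once, so that one coefficient vector witnesses maximality of every map simultaneously. The hypotheses enter only here, in that $I$ is monomial (for torus invariance of $A$) and $\K$ is infinite (so that the dense open set $(\K^*)^n$ meets the maximal-rank locus).
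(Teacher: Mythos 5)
The paper does not actually prove this lemma; it imports it as a citation of \cite{wlpmon}, Proposition 2.3, and your argument is essentially the standard proof given in that reference: the torus rescaling $x_i \mapsto t_i x_i$ fixes any monomial ideal and conjugates $\times \ell$ into $\times L$ for any $\ell$ with all coefficients nonzero, and over an infinite field the (Zariski-open) full-rank locus, if nonempty, must meet the dense open set $(\K^*)^n$. Your proof is correct, including the careful points about openness of the full-rank locus and handling all pairs $(i,j)$ simultaneously.
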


Given a simplicial complex $\Delta$ on vertex set $[n]$, we set 
$$
    A(\Delta) := \frac{S}{(\Nn(\Delta), x_1^2, \dots, x_n^2)}.
$$
The study of Lefschetz properties of algebras of the form $A(\Delta)$ has recently attracted attention from researchers (see for example~\cite{lefschetzmm,cooper2023weak,kling2023strong,nguyen2023weak}). One motivation for this is the fact that the algebra $A(\Delta)$ contains all the combinatorial information of $\Delta$.

\begin{proposition}[\cite{daonair, mns}]
    Let $\Delta$ be a simplicial complex. The monomials of $A(\Delta)$ of degree $i$ are in bijection with the $i-1$-dimensional faces of $\Delta$. The Hilbert series of $A(\Delta)$ is given by:
    $$
        HS_{A(\Delta)}(t) = \sum_{i \geq 0} f_{i - 1}t^i.
    $$
    In particular, the $h$-vector of $A(\Delta)$ is the $f$-vector of $\Delta$.
\end{proposition}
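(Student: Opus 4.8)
The plan is to exhibit an explicit monomial $\K$-basis of $A(\Delta)$ and to read off all three assertions from it. Both $\Nn(\Delta)$ and $(x_1^2,\dots,x_n^2)$ are monomial ideals, hence so is their sum $J := \Nn(\Delta) + (x_1^2,\dots,x_n^2)$; therefore the classes of the standard monomials (those not lying in $J$) form a $\K$-basis of $A(\Delta) = S/J$, and the first step is simply to identify them. A monomial fails to be divisible by any $x_i^2$ precisely when it is squarefree, and squarefree monomials are in bijection with subsets $\tau \subseteq [n]$ via $\tau \mapsto x_\tau$.

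The second step, which is the only slightly delicate one, is to decide which squarefree $x_\tau$ also avoid $\Nn(\Delta)$. I would show that $x_\tau \in \Nn(\Delta)$ if and only if $\tau \notin \Delta$: the monomial $x_\tau$ lies in $\Nn(\Delta)$ exactly when it is divisible by some generator $x_\sigma$ with $\sigma \notin \Delta$, i.e. when $\tau$ contains a nonface. If $\tau \notin \Delta$ one may take $\sigma = \tau$; conversely, if $\tau$ contains a nonface $\sigma$, then $\tau \notin \Delta$ because $\Delta$ is closed under taking subsets. Hence the standard monomials are exactly $\{x_\tau : \tau \in \Delta\}$. Grading by degree and using $\deg x_\tau = |\tau| = \dim \tau + 1$, the degree-$i$ standard monomials correspond bijectively to the faces $\tau$ with $|\tau| = i$, that is, to the $(i-1)$-dimensional faces of $\Delta$, which is the claimed bijection.

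The remaining statements then follow immediately. Counting basis elements in each degree gives $\dim_\K A(\Delta)_i = f_{i-1}(\Delta)$ for all $i \geq 0$ (with $f_{-1} = 1$ matching $A(\Delta)_0 = \K$), so
$$
    HS_{A(\Delta)}(t) = \sum_{i \geq 0} \dim_\K A(\Delta)_i \, t^i = \sum_{i \geq 0} f_{i-1}(\Delta)\, t^i .
$$
Finally, since $A(\Delta)$ is artinian its Hilbert series is a polynomial, and the $h$-vector of a standard graded artinian algebra is by definition the sequence of its coefficients; thus the $h$-vector of $A(\Delta)$ equals $(f_{-1}, f_0, \dots, f_d)$, the $f$-vector of $\Delta$. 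The main (and really the only) obstacle is the monomial-basis reduction underlying the first two steps: the fact that a quotient by a sum of monomial ideals admits a standard-monomial basis, so that no cancellation occurs beyond killing the monomials of $J$ themselves.
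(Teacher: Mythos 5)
Your proof is correct, and there is nothing in the paper to compare it against: the proposition is quoted from \cite{daonair, mns} without proof. Your argument—reduce to the standard-monomial basis of the monomial quotient, identify the standard monomials as the squarefree $x_\tau$ with $\tau \in \Delta$ (using that $\Delta$ is closed under subsets for the "only if" direction), and then read off the Hilbert series and, via artinianness, the $h$-vector—is the standard proof of this fact and handles the only delicate points (the divisibility criterion for membership in $\Nn(\Delta)$ and the degree shift $\deg x_\tau = \dim\tau + 1$) correctly.
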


\begin{proposition}[\cite{boijthesis}]\label{p:purelevel}
    Let $\Delta$ be a simplicial complex. The algebra $A(\Delta)$ is level if and only if $\Delta$ is pure.
\end{proposition}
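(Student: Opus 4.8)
The plan is to describe the socle of $A(\Delta)$ completely explicitly in combinatorial terms, and then simply read off levelness from that description. Since $A(\Delta)$ is the quotient of $S$ by the monomial ideal $(\Nn(\Delta), x_1^2, \dots, x_n^2)$, it is a monomial algebra whose $\K$-basis in degree $i$ consists of the (images of the) squarefree monomials $x_\tau$ with $\tau$ an $(i-1)$-face of $\Delta$, by the preceding proposition. Multiplication by any variable $x_j$ carries each basis monomial either to another basis monomial or to $0$, so I would first invoke the standard fact that under these circumstances $\text{soc}(A(\Delta))$ is spanned by monomials: a $\K$-linear combination is annihilated by all of $A(\Delta)_1$ if and only if each of its monomial terms is.

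Next I would compute exactly which monomials lie in the socle. Fix a face $\tau \in \Delta$ and a vertex $j$. If $j \in \tau$, then $x_j x_\tau$ contains the factor $x_j^2$, which is $0$ in $A(\Delta)$; if $j \notin \tau$, then $x_j x_\tau = x_{\tau \cup \{j\}}$, and this vanishes in $A(\Delta)$ precisely when $\tau \cup \{j\} \notin \Delta$. Hence $x_\tau \in \text{soc}(A(\Delta))$ if and only if no vertex can be adjoined to $\tau$ while staying inside $\Delta$, i.e.\ if and only if $\tau$ is a facet of $\Delta$. This yields the clean description
$$
    \text{soc}(A(\Delta)) = \bigoplus_{\tau \text{ a facet of } \Delta} \K\, x_\tau.
$$

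Finally I would translate this into a statement about degrees. A facet $\tau$ of dimension $e$ contributes a socle element in degree $e+1$, and the top nonzero degree of $A(\Delta)$ is $d = \max\{i : A(\Delta)_i \neq 0\} = \dim\Delta + 1$, realized by the facets of maximal dimension. Comparing with the definition of level, $\text{soc}(A(\Delta)) = A(\Delta)_d$ holds if and only if every facet sits in degree $d$, that is, every facet has dimension $d - 1 = \dim\Delta$; this is exactly the condition that $\Delta$ be pure. Reading the equivalence in both directions finishes the argument. I do not expect any genuine obstacle here: the only points needing care are the justification that the socle of a monomial algebra is monomial (standard, and easy from the fact that each $\times x_j$ permutes-or-kills basis monomials) and the consistent bookkeeping of the shift between the dimension of a face and the degree of its monomial.
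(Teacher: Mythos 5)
Your proof is correct. Note that the paper itself gives no argument for this statement at all: it is quoted as a known result with a citation to Boij's thesis, so there is no in-paper proof to compare against. Your route---observing that the socle of a monomial artinian algebra is spanned by monomials, computing that $x_\tau$ lies in the socle precisely when $\tau$ is a facet (since $x_j x_\tau = 0$ either because $j \in \tau$ and $x_j^2 = 0$, or because $\tau \cup \{j\} \notin \Delta$), and then matching degrees---is the standard direct argument, and all the steps you flag as needing care do go through: multiplication by $x_j$ is injective on the basis monomials it does not kill, which justifies the monomial-socle reduction, and the inclusion $A(\Delta)_d \subseteq \mathrm{soc}(A(\Delta))$ is automatic since $A(\Delta)_{d+1} = 0$, so levelness is exactly the statement that no facet of submaximal dimension exists, i.e.\ purity. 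In effect you have supplied a self-contained proof of a result the paper only cites.
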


\begin{example}\label{e:wslp}
    Let $\Delta$ be the Stanley-Reisner complex of the ideal $I$ from example \cref{e:stanleyfacet}. The results in~\cite{cooper2023weak} imply $A(\Delta)$ has the WLP when the base field has any odd characteristic. The matrix that represents the map $\times L^2: A(\Delta)_1 \to A(\Delta)_3$, where $L = a + b + c + d + e + f$ is
    $$
    M = 
    \begin{blockarray}{ccccccc}
        & a & b & c & d & e & f \\
      \begin{block}{c(cccccc)}
        aef & 2 & 0 & 0 & 0 & 2 & 2 \\
        ace & 2 & 0 & 2 & 0 & 2 & 0 \\
        ecd & 0 & 0 & 2 & 2 & 2 & 0 \\
        edf & 0 & 0 & 0 & 2 & 2 & 2 \\
        bdf & 0 & 2 & 0 & 2 & 0 & 2 \\
      \end{block}
    \end{blockarray}.
    $$
    In particular, $M$ is not surjective since its transpose is not injective
    $$
        M^T \begin{pmatrix}
            1 \\
            -1 \\
            1 \\
            -1 \\
            0
        \end{pmatrix} = 0
    $$
    and so $A(\Delta)$ fails the SLP. Note that the element in the kernel above comes from the cycle of even length of $\Delta$, namely $e, \{a,e,f\}, a, \{a,c,e\}, c, \{e,c,d\}, d, \{e,d,f\}, e$.
\end{example}
  
Next we define incidence ideals of a simplicial complex $\Delta$, which were first introduced in \cite{lefschetzmm} to get information on the failure of the WLP  of $A(\Delta)$ in positive characteristics.

\begin{definition}
    The \textbf{incidence ring} of a simplicial complex $\Delta$ is the ring:
    $$
        R_\Delta = \C[t_\sigma : \sigma \in \Delta].
    $$
    Moreover, we set $R_{\Delta, i} = R_\Delta/(t_\sigma : |\sigma| \neq i)$. In other words, $R_\Delta$ is a polynomial ring where the variables are the faces of $\Delta$, and $R_{\Delta, i}$ is the polynomial ring where the variables are the $i-1$-dimensional faces of $\Delta$.

    Let $\sigma_1, \dots, \sigma_{f_j}$ be the $j$-faces of $\Delta$. The \textbf{$(i, j)$-th incidence ideal of $\Delta$} is the ideal
    $$
        I_\Delta(i, j) = \Bigg{(}\prod_{\tau \subset \sigma_1, |\tau| = i + 1} t_\tau, \dots, \prod_{\tau \subset \sigma_{f_{j}}, |\tau| = i + 1} t_\tau\Bigg{)} \subset R_{\Delta, i + 1}.
    $$
    That is, the generators of $I_\Delta(i, j)$ are the products of sets of variables in $R_{\Delta, i + 1}$ corresponding to the $i$-faces of a $j$-face of $\Delta$.
\end{definition}

\begin{definition}
    Let $\Delta$ be a simplicial complex, the $i$-th \textbf{skeleton} of $\Delta$, denoted by $\Delta^{(i)}$, is the simplicial complex where the faces of $\Delta^{(i)}$ are the faces of $\Delta$ that have dimension at most $i$.

    We define the \textbf{$(i, j)$-incidence hypergraph} of $\Delta$, denoted by $H_{\Delta}(i, j)$, as the hypergraph with vertex set the $i$-dimensional faces of $\Delta$ and a set $E$ of $i$-faces forms an edge if and only if $E$ is the set of $i$-faces of a $j$-face of $\Delta$. Note that $H_\Delta(i, j)$ is also a simplicial complex (the edges of the hypergraph being the facets), which we will call the \textbf{$(i,j)$-incidence complex} of $\Delta$.
\end{definition}

\begin{example}\label{e:incidencecomplexes}
    Let $\Delta$ be the Stanley-Reisner complex of $I$ from \cref{e:stanleyfacet}. Then the complexes below are the incidence complexes of $\Delta$.
    \begin{center}

\tikzset{every picture/.style={line width=0.75pt}} 

\begin{tikzpicture}[x=0.75pt,y=0.75pt,yscale=-1,xscale=1]

\draw  [fill={rgb, 255:red, 155; green, 155; blue, 155 }  ,fill opacity=1 ] (472,104) -- (557,104) -- (557,189) -- (472,189) -- cycle ;
\draw  [fill={rgb, 255:red, 155; green, 155; blue, 155 }  ,fill opacity=1 ] (514.75,48) -- (557.5,104) -- (472,104) -- cycle ;
\draw    (472,104) -- (557,189) ;
\draw    (557,104) -- (472,189) ;
\draw    (82,104) -- (167,189) ;
\draw    (167,104) -- (82,189) ;
\draw    (82,104) -- (82,189) ;
\draw    (167,189) -- (82,189) ;
\draw    (124.5,146.5) -- (82,189) ;
\draw [shift={(82,189)}, rotate = 135] [color={rgb, 255:red, 0; green, 0; blue, 0 }  ][fill={rgb, 255:red, 0; green, 0; blue, 0 }  ][line width=0.75]      (0, 0) circle [x radius= 3.35, y radius= 3.35]   ;
\draw [shift={(124.5,146.5)}, rotate = 135] [color={rgb, 255:red, 0; green, 0; blue, 0 }  ][fill={rgb, 255:red, 0; green, 0; blue, 0 }  ][line width=0.75]      (0, 0) circle [x radius= 3.35, y radius= 3.35]   ;
\draw    (82,104) -- (124.5,146.5) ;
\draw [shift={(124.5,146.5)}, rotate = 45] [color={rgb, 255:red, 0; green, 0; blue, 0 }  ][fill={rgb, 255:red, 0; green, 0; blue, 0 }  ][line width=0.75]      (0, 0) circle [x radius= 3.35, y radius= 3.35]   ;
\draw [shift={(82,104)}, rotate = 45] [color={rgb, 255:red, 0; green, 0; blue, 0 }  ][fill={rgb, 255:red, 0; green, 0; blue, 0 }  ][line width=0.75]      (0, 0) circle [x radius= 3.35, y radius= 3.35]   ;
\draw    (167,104) -- (124.5,146.5) ;
\draw    (167,104) -- (167,189) ;
\draw [shift={(167,189)}, rotate = 90] [color={rgb, 255:red, 0; green, 0; blue, 0 }  ][fill={rgb, 255:red, 0; green, 0; blue, 0 }  ][line width=0.75]      (0, 0) circle [x radius= 3.35, y radius= 3.35]   ;
\draw [shift={(167,104)}, rotate = 90] [color={rgb, 255:red, 0; green, 0; blue, 0 }  ][fill={rgb, 255:red, 0; green, 0; blue, 0 }  ][line width=0.75]      (0, 0) circle [x radius= 3.35, y radius= 3.35]   ;
\draw    (124.75,48) -- (82,104) ;
\draw    (124.75,48) -- (167,104) ;
\draw [shift={(167,104)}, rotate = 52.97] [color={rgb, 255:red, 0; green, 0; blue, 0 }  ][fill={rgb, 255:red, 0; green, 0; blue, 0 }  ][line width=0.75]      (0, 0) circle [x radius= 3.35, y radius= 3.35]   ;
\draw [shift={(124.75,48)}, rotate = 52.97] [color={rgb, 255:red, 0; green, 0; blue, 0 }  ][fill={rgb, 255:red, 0; green, 0; blue, 0 }  ][line width=0.75]      (0, 0) circle [x radius= 3.35, y radius= 3.35]   ;
\draw    (82,104) -- (167,104) ;
\draw  [fill={rgb, 255:red, 155; green, 155; blue, 155 }  ,fill opacity=1 ] (350,73) -- (385.5,113) -- (314.5,113) -- cycle ;
\draw  [fill={rgb, 255:red, 155; green, 155; blue, 155 }  ,fill opacity=1 ] (425.35,146.43) -- (385.2,179.5) -- (385.5,113) -- cycle ;
\draw  [fill={rgb, 255:red, 155; green, 155; blue, 155 }  ,fill opacity=1 ] (348.24,218.84) -- (312.71,178.21) -- (385.2,179.5) -- cycle ;
\draw  [fill={rgb, 255:red, 155; green, 155; blue, 155 }  ,fill opacity=1 ] (274.16,143.29) -- (315.53,111.77) -- (312.71,178.21) -- cycle ;
\draw  [fill={rgb, 255:red, 155; green, 155; blue, 155 }  ,fill opacity=1 ] (274.16,143.29) -- (234.54,176.99) -- (233.78,110.5) -- cycle ;

\draw (509,27) node [anchor=north west][inner sep=0.75pt]   [align=left] {$\displaystyle b$};
\draw (521,140) node [anchor=north west][inner sep=0.75pt]   [align=left] {$\displaystyle e$};
\draw (559,84) node [anchor=north west][inner sep=0.75pt]   [align=left] {$\displaystyle d$};
\draw (459,87) node [anchor=north west][inner sep=0.75pt]   [align=left] {$\displaystyle f$};
\draw (459,187) node [anchor=north west][inner sep=0.75pt]   [align=left] {$\displaystyle a$};
\draw (559,185) node [anchor=north west][inner sep=0.75pt]   [align=left] {$\displaystyle c$};
\draw (119,27) node [anchor=north west][inner sep=0.75pt]   [align=left] {$\displaystyle b$};
\draw (131,140) node [anchor=north west][inner sep=0.75pt]   [align=left] {$\displaystyle e$};
\draw (169,84) node [anchor=north west][inner sep=0.75pt]   [align=left] {$\displaystyle d$};
\draw (69,87) node [anchor=north west][inner sep=0.75pt]   [align=left] {$\displaystyle f$};
\draw (65,187) node [anchor=north west][inner sep=0.75pt]   [align=left] {$\displaystyle a$};
\draw (174,187) node [anchor=north west][inner sep=0.75pt]   [align=left] {$\displaystyle c$};
\draw (217,178) node [anchor=north west][inner sep=0.75pt]   [align=left] {$\displaystyle bf$};
\draw (219,91) node [anchor=north west][inner sep=0.75pt]   [align=left] {$\displaystyle bd$};
\draw (261.16,147.52) node [anchor=north west][inner sep=0.75pt]   [align=left] {$\displaystyle fd$};
\draw (295.16,179.52) node [anchor=north west][inner sep=0.75pt]   [align=left] {$\displaystyle ef$};
\draw (298.16,92.52) node [anchor=north west][inner sep=0.75pt]   [align=left] {$\displaystyle de$};
\draw (339.16,52.52) node [anchor=north west][inner sep=0.75pt]   [align=left] {$\displaystyle cd$};
\draw (387.16,93.52) node [anchor=north west][inner sep=0.75pt]   [align=left] {$\displaystyle ce$};
\draw (427.16,137.52) node [anchor=north west][inner sep=0.75pt]   [align=left] {$\displaystyle ac$};
\draw (384.2,176.5) node [anchor=north west][inner sep=0.75pt]   [align=left] {$\displaystyle ae$};
\draw (339.16,218.52) node [anchor=north west][inner sep=0.75pt]   [align=left] {$\displaystyle af$};
\draw (93,243) node [anchor=north west][inner sep=0.75pt]   [align=left] {$\displaystyle H_{\Delta }( 0,1)$};
\draw (306,243) node [anchor=north west][inner sep=0.75pt]   [align=left] {$\displaystyle H_{\Delta }( 1,2)$};
\draw (484,243) node [anchor=north west][inner sep=0.75pt]   [align=left] {$\displaystyle H_{\Delta }( 0,2)$};

\end{tikzpicture}
    \end{center}
    Moreover, the incidence ideals are
    \begin{enumerate}
        \item $I_\Delta(0,1) = (t_a t_e, t_a t_f, t_a t_c, t_e t_c, t_e t_d, t_c t_d, t_e t_f, t_f t_d, t_b t_f, t_b t_d) \subset R_{\Delta, 1}$
        \item $I_\Delta(1, 2) = (t_{bf}t_{bd}t_{fd}, t_{fd} t_{de} t_{ef}, t_{ef} t_{af} t_{ae}, t_{ae} t_{ac} t_{ce}, t_{ce} t_{cd} t_{de}) \subset R_{\Delta, 2}$
        \item $I_\Delta(0,2) = (t_a t_e t_c, t_a t_e t_f, t_e t_c t_d, t_e t_d t_f, t_b t_d t_f) \subset R_{\Delta, 1}$.
    \end{enumerate}
\end{example}

\begin{remark}
    In \cite{lefschetzmm} incidence ideals were only defined for maps going from degree $i$ to $i + 1$, here we define them for maps going from degree $i$ to $i + j$. In the notation from \cite{lefschetzmm} we have $I_\Delta(i) = I_\Delta(i - 1, i)$.
\end{remark}

\begin{proposition}\cite{lefschetzmm}\label{lefschetzmm}
    Let $\Delta$ be a simplicial complex on vertex set $[n]$, $A(\Delta)$ the respective algebra and $L = x_1 + \dots + x_n$. Then up to a constant, the multiplication map $\times L^j: A(\Delta)_i \to A(\Delta)_{i + j}$ is represented by the incidence matrix of $H_\Delta(i - 1, i + j - 1)$, where the entries are taken in the base field of $A(\Delta)$. In particular, the map $\times L^j: A(\Delta)_i \to A(\Delta)_{i + j}$ has full rank in characteristic zero if and only if 
    $$
        \ell(\F(H_\Delta(i - 1, i + j - 1))) = \min (\dim A(\Delta)_i, \dim A(\Delta)_{i + j}).
    $$
\end{proposition}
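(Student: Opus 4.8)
The plan is to compute the matrix of $\times L^j$ explicitly in the natural monomial bases and recognize it as a scalar multiple of the incidence matrix, then feed this into \cref{rankanalyticspread}. By the cited description of the monomials of $A(\Delta)$, a basis of $A(\Delta)_i$ is given by the squarefree monomials $x_\sigma$ with $\sigma$ an $(i-1)$-face of $\Delta$, and likewise a basis of $A(\Delta)_{i+j}$ is indexed by the $(i+j-1)$-faces. So it suffices to expand $L^j x_\sigma$ for each such $\sigma$ and read off the coefficients against the target basis.

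For the computation I would write $L^j = \big(\sum_k x_k\big)^j = \sum_{f \colon [j] \to [n]} x_{f(1)} \cdots x_{f(j)}$ and multiply by $x_\sigma$. In $A(\Delta)$ the relations $x_k^2 = 0$ kill every term in which some $f(\ell)$ is repeated or already lies in $\sigma$, and the Stanley--Reisner relations $\Nn(\Delta)$ kill every term for which $\sigma \cup \{f(1), \dots, f(j)\}$ is a non-face. What survives is a sum over $(i+j-1)$-faces $\tau \supseteq \sigma$, and for each such $\tau$ the functions $f$ whose image is the $j$-element set $\tau \setminus \sigma$ are precisely its $j!$ orderings. Hence
$$
    L^j x_\sigma = j! \sum_{\substack{\tau \in \Delta,\ |\tau| = i + j \\ \sigma \subset \tau}} x_\tau,
$$
so the $(\tau, \sigma)$ entry of the matrix of $\times L^j$ equals $j!$ when $\sigma \subset \tau$ and $0$ otherwise. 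Comparing with the definition of $H_\Delta(i-1, i+j-1)$, whose vertices are the $(i-1)$-faces and whose edge attached to an $(i+j-1)$-face $\tau$ consists of the $(i-1)$-faces contained in $\tau$, this matrix is exactly $j!$ times the incidence matrix of $H_\Delta(i-1, i+j-1)$, which establishes the first assertion with constant $j!$.

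For the "in particular" statement, note that $H_\Delta(i-1, i+j-1)$ is a uniform hypergraph, each edge having $\binom{i+j}{i}$ vertices, so its facet ideal $\F(H_\Delta(i-1, i+j-1))$ is generated by monomials all of degree $\binom{i+j}{i}$, and its log-matrix coincides with the incidence matrix $M(H_\Delta(i-1,i+j-1))$ (the exponent row of the generator attached to $\tau$ records which $(i-1)$-faces lie in $\tau$). By \cref{rankanalyticspread} we then get $\ell(\F(H_\Delta(i-1,i+j-1))) = \rk \log(\F(H_\Delta(i-1,i+j-1))) = \rk M(H_\Delta(i-1,i+j-1))$. In characteristic zero $j!$ is invertible, so $\times L^j$ has the same rank as the incidence matrix, and "full rank" means this rank equals $\min(\dim A(\Delta)_i, \dim A(\Delta)_{i+j})$; combining the two equalities yields the stated criterion. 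I expect the only delicate point to be the bookkeeping in the multiplication step, namely verifying that repeated variables and non-faces account for \emph{all} vanishing terms and that each surviving target appears with multiplicity exactly $j!$; the passage to the analytic spread is then a direct application of \cref{rankanalyticspread}.
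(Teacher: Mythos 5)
Your proposal is correct and follows essentially the same route as the paper: expand $L^j x_\sigma$ in the monomial bases, observe that squarefree and Stanley--Reisner relations leave exactly the $(i+j-1)$-faces containing $\sigma$ each with coefficient $j!$, identify the resulting matrix as $j!$ times the incidence matrix of $H_\Delta(i-1,i+j-1)$, and then apply \cref{rankanalyticspread} after noting that this hypergraph is uniform so its facet ideal is generated in a single degree. The only cosmetic difference is that you expand $L^j$ as a sum over functions $[j]\to[n]$ and count orderings, while the paper writes the same count more compactly; both arguments are otherwise identical.
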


\begin{proof}
        Note that for any nonzero monomial $m = x_{r_1} \dots x_{r_{i}} \in A(\Delta)_{i}$ we have
        $$
            L^j m = j! m\sum_{\{s_1, \dots, s_j\} \in \Delta} x_{s_1} \dots x_{s_j} = j! \sum_{\substack{\{s_1, \dots, s_{i + j}\} \in \Delta\\
            \{r_1, \dots, r_{i}\} \subset \{s_1, \dots, s_{i + j}\}}} x_{s_1} \dots x_{s_{i + j}}.
        $$
        In other words, after fixing an order for the monomial basis of $A(\Delta)_{i}$ and $A(\Delta)_{i + j}$, the matrix that represents the linear transformation $\times L^j: A(\Delta)_{i} \to A(\Delta)_{i + j}$ is given by
        $$
        [\times L^j]_{kl} = \begin{cases}
            j! \quad \text{if the $k$-th $i + j - 1$-face of $\Delta$ contains the $l$-th $i-1$-face of $\Delta$} \\
            0 \quad \text{otherwise},
        \end{cases}
        $$
        so $[\times L^j]$ is $j!$ times the incidence matrix of $H_\Delta(i - 1, i + j - 1)$. The last part of the statement follows since the rows of the matrix defined above all add up to $\binom{i + j}{i}$, which is the number of $i-1$-faces of a $i + j - 1$-face. So $H_\Delta(i - 1, i + j - 1)$ is a pure simplicial complex and its facet ideal is generated in one degree. By \cref{rankanalyticspread} we conclude 
        $$
            \ell(\F(H_\Delta(i - 1, i + j - 1))) = \rk [\times L^j] \leq \min(\dim A(\Delta)_i, \dim A(\Delta)_{i + j}),
        $$
        and so the result follows.
\end{proof}

The following criteria for the first multiplication map of $A(\Delta)$ to have full rank was first proved in \cite{daonair} in characteristic zero and then extended in \cite{lefschetzmm} to arbitrary monomial artinian algebras in odd characteristics.

\begin{theorem}[\cite{daonair,lefschetzmm}]\label{t:daonair}
    Let $\Delta$ be a simplicial complex on vertex set $[n]$ with at least $n$ edges and $L = x_1 + \dots + x_n$ a linear form in $S = \K[x_1, \dots, x_n]$. The multiplication map $\times L: A(\Delta)_1 \to A(\Delta)_2$ has full rank if and only if the characteristic of the base field is not $2$ and every connected component of $\Delta^{(1)}$ is not bipartite.
\end{theorem}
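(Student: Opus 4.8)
The plan is to translate the full-rank condition into a statement about the kernel of a vertex-edge incidence matrix over $\K$, and then to compute that kernel by a case analysis on $\operatorname{char}\K$.

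First I would apply \cref{lefschetzmm} with $i = j = 1$. It tells us that the map $\times L\colon A(\Delta)_1 \to A(\Delta)_2$ is represented, up to the nonzero scalar $1! = 1$, by the incidence matrix $M$ of $H_\Delta(0,1)$. Unwinding the definition, $H_\Delta(0,1)$ is the graph whose vertices are the $0$-faces of $\Delta$ and whose edges are the $1$-faces of $\Delta$, so $M$ is the $f_1 \times f_0$ matrix with rows indexed by edges, columns by vertices, and a $1$ in position $(e, v)$ exactly when $v \in e$; this is precisely the vertex-edge incidence matrix of the graph $\Delta^{(1)}$. Because $\Delta$ has at least $n = f_0$ edges, we have $f_1 \geq f_0$, so $\min(\dim_\K A(\Delta)_1, \dim_\K A(\Delta)_2) = n$ and the map has full rank if and only if $M$ has full column rank, i.e. if and only if $\ker M = \{v \in \K^n : Mv = 0\} = 0$. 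The equations defining this kernel are $v_i + v_j = 0$ for every edge $\{i,j\}$ of $\Delta^{(1)}$, so everything reduces to computing $\dim_\K \ker M$, and this can be done one connected component at a time since $M$ is block diagonal after reordering vertices and edges by component.

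Next I would treat the two characteristics separately. If $\operatorname{char}\K = 2$, the relation $v_i + v_j = 0$ says $v_i = v_j$, so any kernel vector is constant on each connected component; since $\Delta^{(1)}$ has at least one edge this produces a nonzero kernel element, and the map fails to have full rank, matching the requirement $\operatorname{char}\K \neq 2$. If $\operatorname{char}\K \neq 2$, fix a component $C$, a base vertex $r \in C$, and set $v_r = a$; the relations $v_i = -v_j$ force $v_u = (-1)^{k} a$ for any walk of length $k$ from $r$ to $u$. This assignment is consistent precisely when every closed walk in $C$ has even length, i.e. when $C$ is bipartite, in which case it gives a one-dimensional space of kernel vectors on $C$. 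If $C$ contains an odd cycle (equivalently, $C$ is not bipartite), traversing that cycle forces $v_u = -v_u$, hence $2v_u = 0$, hence $v_u = 0$; by connectedness $v$ vanishes on all of $C$. Therefore $\dim_\K \ker M$ equals the number $b$ of bipartite connected components of $\Delta^{(1)}$, and $\ker M = 0$ if and only if $b = 0$, i.e. every connected component of $\Delta^{(1)}$ is non-bipartite.

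Combining the two cases yields the claim: the map has full rank exactly when $\operatorname{char}\K \neq 2$ and every connected component of $\Delta^{(1)}$ is non-bipartite. The only delicate point, and the step I expect to require the most care, is the odd-cycle argument when $\operatorname{char}\K \neq 2$: one must verify both that an odd cycle kills the kernel on its component (which is exactly where $\operatorname{char}\K \neq 2$ is used, since it turns $2v_u = 0$ into $v_u = 0$) and that a bipartite component contributes exactly a one-dimensional piece, so that the kernel dimension is counted correctly. Everything else is the standard reduction via \cref{lefschetzmm} together with elementary linear algebra.
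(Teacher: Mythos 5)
Your proof is correct. Note that the paper does not actually prove \cref{t:daonair}: it is quoted as a known result from \cite{daonair} (characteristic zero) and \cite{lefschetzmm} (odd characteristic), so there is no internal proof to compare against. Your argument is exactly the natural route those sources take and that the surrounding text suggests: use \cref{lefschetzmm} with $i=j=1$ to identify the map $\times L\colon A(\Delta)_1 \to A(\Delta)_2$ with the vertex--edge incidence matrix $M$ of $\Delta^{(1)}$, observe that $f_1 \geq n$ reduces full rank to $\ker M = 0$, and then invoke the classical linear-algebra fact that $\dim_\K \ker M$ equals the number of connected components in characteristic $2$ and the number of bipartite components in characteristic $\neq 2$. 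Your case analysis is complete on both directions: the all-ones vector (or a component-constant vector) witnesses failure in characteristic $2$, the $\pm 1$ bipartition vector witnesses failure when some component is bipartite, and the odd-cycle argument $2v_u = 0 \Rightarrow v_u = 0$ propagated by connectedness gives injectivity otherwise; this last step is indeed the only place characteristic $\neq 2$ enters. One small remark: isolated vertices of $\Delta^{(1)}$ (which are bipartite components and contribute free kernel coordinates) are silently but correctly handled by your walk-based parametrization, since a component consisting of a single vertex gives a one-dimensional kernel contribution, consistent with the theorem's exclusion of all bipartite components.
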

When dealing with results regarding Lefschetz properties, we will often assume our simplicial complex has at least as many vertices as faces of a specific dimension. This assumption means the result on Lefschetz properties implies injectivity. Some results, such as \cref{t:daonair}, can be modified to also include the surjective case, but in some cases, the behaviour of the injective and surjective cases are completely different (see for example~\cref{r:atleast}).

Lastly, we mention a result by Hausel \cite{hausel}, that shows the injectivity of multiplication maps in many degrees for monomial level artinian algebras. This result will be used in \cref{s:symbolic}.

\begin{theorem}[Theorem 6.3~\cite{hausel}]\label{hausel}
    Let $A = S/I$ be an artinian level $\K$-algebra of socle degree $d$ defined by a monomial ideal $I$, and $\K$ a field of characteristic zero. Let $L = x_1 + \dots + x_n \in S_1$. The following maps are injective
    $$
        \times L^{d - 2k}: A_k \to A_{d - k} \quad \text{for $k < d/2$.}
    $$
\end{theorem}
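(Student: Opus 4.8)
The plan is to reduce the injectivity of $\times L^{d-2k}\colon A_k \to A_{d-k}$ to the strong Lefschetz property of \emph{monomial complete intersections}, using the combinatorial structure that levelness forces on the standard monomials. Write $A = S/I$ with $I$ monomial; the standard monomials (those $x^b \notin I$) form a basis of $A$ and are closed under division. The essential translation of the level hypothesis is the statement that every standard monomial divides a standard monomial of degree $d$: the socle of $A$ has a monomial basis consisting of the standard monomials $x^b$ with $x_i x^b \in I$ for all $i$, i.e. those maximal under divisibility, and levelness says all of these lie in degree $d$, while repeatedly multiplying any standard monomial by variables without leaving $A$ reaches such a maximal one.

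First I would attach a complete intersection to each socle monomial. Fix a standard monomial $x^a$ of degree $d$ that is maximal under divisibility, so $\sum a_i = d$. Since the standard monomials are closed under division, every $x^c$ with $c \le a$ coordinatewise is standard; hence no monomial of $I$ divides $x^a$, so each minimal generator of $I$ has some exponent exceeding $a$ and $I \subseteq (x_1^{a_1+1}, \dots, x_n^{a_n+1})$. This yields a graded ring surjection $\pi_a\colon A \twoheadrightarrow A_a := S/(x_1^{a_1+1}, \dots, x_n^{a_n+1})$ fixing each $x_i$, and in particular fixing $L$. The algebra $A_a$ is a monomial complete intersection of socle degree $\sum a_i = d$.

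Next I would invoke the classical fact that a monomial complete intersection has the SLP in characteristic zero (Stanley, Watanabe, Reid--Roberts--Roitman): as $A_a$ is Gorenstein with symmetric Hilbert function and $k \le d/2$, the map $\times L^{d-2k}\colon (A_a)_k \to (A_a)_{d-k}$ has maximal rank between spaces of equal dimension, hence is injective. Now suppose $\eta \in A_k$ satisfies $L^{d-2k}\eta = 0$ in $A_{d-k}$. Applying each $\pi_a$ and using $\pi_a(L) = L$ gives $L^{d-2k}\pi_a(\eta) = 0$ in $(A_a)_{d-k}$, so $\pi_a(\eta) = 0$ for every socle monomial $x^a$. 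Finally, the product map $A_k \to \prod_a (A_a)_k$ is injective: if $\eta = \sum_b c_b x^b \ne 0$, choose $b_0$ with $c_{b_0} \ne 0$ and, by the level characterization, a socle monomial $x^a$ with $b_0 \le a$; in the component $A_a$ the monomials $x^b$ with $b \le a$ remain distinct basis elements, so $\pi_a(\eta)$ still carries the nonzero coefficient $c_{b_0}$. Thus $\pi_a(\eta) = 0$ for all $a$ forces $\eta = 0$, giving the desired injectivity.

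The conceptual content sits entirely in this reduction; the one genuinely hard ingredient, and the only place the hypothesis on $\K$ is used, is the strong Lefschetz property of monomial complete intersections, which is exactly the statement that can fail in positive characteristic. If a fully self-contained argument were wanted, this is the step one would have to reprove, e.g. through Watanabe's $\mathfrak{sl}_2$-representation (Hard Lefschetz) argument; taking it as known, the remaining work is the combinatorial characterization of levelness and the elementary injectivity of $A_k \hookrightarrow \prod_a (A_a)_k$.
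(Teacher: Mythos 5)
Your proof is correct, but there is nothing in the paper to compare it against: the paper states this result verbatim as Theorem 6.3 of Hausel~\cite{hausel} and uses it as a black box (in \cref{c:maxanalyticspread}, \cref{s:symbolic} and \cref{t:posmm}); no proof is given there. Checking your argument on its own merits, every step holds: the socle of an artinian monomial algebra is spanned by the standard monomials maximal under divisibility, levelness places all of these in degree $d$, and artinianness lets you climb from any standard monomial to such a maximal one, so every standard monomial divides a degree-$d$ socle monomial; for each socle monomial $x^a$ the containment $I \subseteq (x_1^{a_1+1},\dots,x_n^{a_n+1})$ is forced because a generator of $I$ dividing $x^a$ would put $x^a$ in $I$; the diagonal map $A_k \to \prod_a (A_a)_k$ is injective exactly by the divisibility statement; and since each $A_a$ is Gorenstein with symmetric Hilbert function and socle degree $d$, the SLP of monomial complete intersections in characteristic zero (Stanley, Watanabe, Reid--Roberts--Roitman, together with the reduction to $L = x_1 + \cdots + x_n$ for monomial ideals as in \cite{wlpmon}) makes $\times L^{d-2k}\colon (A_a)_k \to (A_a)_{d-k}$ bijective for $k \le d/2$, and the commutation $\pi_a \circ (\times L^{d-2k}) = (\times L^{d-2k}) \circ \pi_a$ finishes the argument. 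You have also correctly isolated where characteristic zero enters. This reduction --- embedding the level algebra into a product of its Gorenstein (equivalently, monomial complete intersection) quotients indexed by the socle and invoking hard Lefschetz for each factor, which is the cohomology of a product of projective spaces --- is in the same spirit as Hausel's original argument, so what you have produced is a legitimate self-contained reconstruction of the cited theorem rather than an alternative to anything proved in this paper.
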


\begin{corollary}\label{c:maxanalyticspread}
    Let $\Delta$ be a pure simplicial complex of dimension $d$ on vertex set $[n]$. Then for every $1 \leq i < d$, the ideals $\F(\Delta^{(i)}) \subset \K[x_1, \dots, x_n]$ satisfy
    $$
        \ell(\F(\Delta^{(i)})) = n.
    $$
    In other words, facet ideals of skeletons of pure simplicial complexes have maximal analytic spread. 
\end{corollary}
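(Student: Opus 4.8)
The plan is to reduce the statement to the injectivity of a single multiplication map on $A(\Delta)$ and then propagate that injectivity to all smaller powers by a short factorization argument.

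First I would identify the analytic spread with the rank of a multiplication map. Since $\Delta$ is pure of dimension $d \geq i+1$, every $i$-face of $\Delta$ is a facet of the skeleton $\Delta^{(i)}$, so $\F(\Delta^{(i)})$ is generated by the squarefree monomials $x_\sigma$ of degree $i+1$, where $\sigma$ ranges over the $i$-faces of $\Delta$. Its log-matrix is then exactly the incidence matrix of $H_\Delta(0,i)$, whence $\ell(\F(\Delta^{(i)})) = \rk M(H_\Delta(0,i))$ by \cref{rankanalyticspread}. By \cref{lefschetzmm} this incidence matrix represents (up to the nonzero scalar $i!$, which is harmless in characteristic zero) the map $\times L^i \colon A(\Delta)_1 \to A(\Delta)_{1+i}$. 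As $\dim_\K A(\Delta)_1 = f_0 = n$, the equality $\ell(\F(\Delta^{(i)})) = n$ is equivalent to the injectivity of $\times L^i \colon A(\Delta)_1 \to A(\Delta)_{1+i}$.

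Next I would bring in Hausel's theorem. Because $\Delta$ is pure, \cref{p:purelevel} shows $A(\Delta)$ is level, and its Hilbert series $\sum_{i} f_{i-1} t^i$ shows its socle degree is $D = d+1$. The range $1 \leq i < d$ is nonempty only when $d \geq 2$, in which case $1 < (d+1)/2 = D/2$, so \cref{hausel} applies with $k = 1$ and yields the injectivity of $\times L^{D-2} = \times L^{d-1} \colon A(\Delta)_1 \to A(\Delta)_d$.

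Finally I would descend from this one distinguished map to all the required powers. For any $1 \leq i \leq d-1$ the map factors as
$$
    A(\Delta)_1 \xrightarrow{\ \times L^i\ } A(\Delta)_{1+i} \xrightarrow{\ \times L^{d-1-i}\ } A(\Delta)_d,
$$
and since the composite $\times L^{d-1}$ is injective, the first factor $\times L^i$ must be injective as well. Combined with the first paragraph, this gives $\ell(\F(\Delta^{(i)})) = n$ for every $1 \leq i < d$. The only delicate point is this factorization: \cref{hausel} supplies injectivity only for the symmetric power $L^{d-1}$, and the real content is the observation that injectivity of a composite forces injectivity of its first map, so a single invocation of Hausel's theorem covers the entire range of skeleta at once. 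I expect the main obstacle to be conceptual rather than computational, namely recognizing that this downward propagation is precisely what links Hausel's one-degree statement to the full family of ideals $\F(\Delta^{(i)})$.
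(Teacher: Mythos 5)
Your proof is correct and takes essentially the same route as the paper's: purity gives levelness via \cref{p:purelevel}, Hausel's theorem (\cref{hausel}) supplies injectivity of a multiplication map out of $A(\Delta)_1$, and \cref{lefschetzmm} together with \cref{rankanalyticspread} converts injectivity into the statement $\ell(\F(\Delta^{(i)})) = n$. Your only addition is the explicit factorization $\times L^{d-1} = \times L^{d-1-i}\circ \times L^{i}$ to descend from Hausel's complementary-degree map $A(\Delta)_1 \to A(\Delta)_d$ to all intermediate powers, a step the paper's proof uses implicitly when it asserts injectivity of $\times L^{i}$ for the whole range directly from \cref{hausel}.
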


\begin{proof}
    By \cref{p:purelevel}, the algebra $A(\Delta)$ is level. If we take the base field of $A(\Delta)$ to be of characteristic zero, \cref{hausel} implies the multiplication maps $\times L^{i - 1}: A(\Delta)_1 \to A(\Delta)_{i}$ are injective for every $i \leq d - 1$, where $L$ is the linear form given by the sum of variables. \cref{lefschetzmm} then implies the log matrix of $I_\Delta(0, i - 1) = \F(\Delta^{(i)})$ has rank $\dim A(\Delta)_1 = n$. \cref{rankanalyticspread} then implies $\ell(\F(\Delta^{(i)})) = n$ and $\ell(\F(\Delta^{(i)}))$ does not depend on the base field.
\end{proof}

\cref{c:maxanalyticspread} gives us the analytic spread of a large class of squarefree monomial ideals. Note that it is not possible to improve~\cref{c:maxanalyticspread} to also include the case $i = d$. In fact, if $I$ is the facet ideal of a pure simplicial complex with less facets than vertices (i.e $I$ has less generators than variables), the analytic spread is at most the number of generators of $I$, which is less than the number of variables. Due to several results relating analytic spread of an ideal and other algebraic invariants, \cref{c:maxanalyticspread} has consequences to the study of many algebraic invariants of $I$. We will explore such consequences in later sections.
 
\section{Linear type, SLP and the special case of simplicial forests}

In this section, we study the SLP of $A(\Delta)$ in characteristic zero, where  $\Delta$ is a $2$-dimensional complex. The results in this section use properties of Rees algebras of facet ideals, to show the Strong Lefschetz property of specific artinian reductions of Stanley-Reisner ideals.

\begin{proposition}\label{p:slp2dim}
    Let $\Delta$ be a pure $2$-dimensional simplicial complex with $f_0$ vertices, $f_2$ facets and assume the base field of $S$ has characteristic zero. If $f_2 \leq f_0$, then $A(\Delta)$ has the SLP if and only if $\ell(\F(\Delta)) = f_2$.
\end{proposition}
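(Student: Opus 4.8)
The plan is to read off the Hilbert function of $A(\Delta)$ and reduce the strong Lefschetz property to a single nontrivial multiplication map. Write $A = A(\Delta)$. Since $\Delta$ is pure of dimension $2$, every face lies in a triangle, so $A$ has socle degree $3$ and Hilbert function $(1, f_0, f_1, f_2)$. Two numerical facts will be used repeatedly. First, purity forces every vertex to lie in a triangle, hence to have degree at least $2$ in the graph $\Delta^{(1)}$; by the handshake identity $2f_1 = \sum_v \deg(v) \geq 2f_0$, so $f_1 \geq f_0$, and combined with the hypothesis $f_2 \leq f_0$ this gives $f_1 \geq f_0 \geq f_2$. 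Second, to verify SLP I must check that every map $\times L^j \colon A_i \to A_{i+j}$ has maximal rank. The maps out of $A_0$ send $1$ to $j!\sum_{\dim\sigma = j-1} x_\sigma$, which is nonzero for $j = 1,2,3$ (there is a vertex, an edge and a triangle), so they are injective of rank $1 = \min$; maps into a zero graded piece are vacuous. Thus only $\times L\colon A_1 \to A_2$, $\times L\colon A_2 \to A_3$ and $\times L^2\colon A_1 \to A_3$ remain to be analysed.

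The key map is $\times L^2\colon A_1 \to A_3$. By \cref{lefschetzmm} it is represented, up to the scalar $2$, by the incidence matrix of $H_\Delta(0,2)$, whose facets are exactly the vertex sets of the triangles of $\Delta$; hence $\F(H_\Delta(0,2)) = \F(\Delta)$, and \cref{lefschetzmm} (together with \cref{rankanalyticspread}) gives that this map has full rank if and only if $\ell(\F(\Delta)) = \min(f_0,f_2) = f_2$, the last equality by $f_2 \leq f_0$. This already yields the forward implication: if $A$ has the SLP then in particular $\times L^2\colon A_1 \to A_3$ has maximal rank, so $\ell(\F(\Delta)) = f_2$.

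For the converse I assume $\ell(\F(\Delta)) = f_2$ and show the two remaining maps have maximal rank. For $\times L\colon A_1 \to A_2$ I claim full rank holds unconditionally for a pure $2$-dimensional complex in characteristic zero: every connected component of $\Delta^{(1)}$ contains a triangle, hence an odd cycle, so no component is bipartite, and since $f_1 \geq f_0 = n$ the complex has at least $n$ edges; \cref{t:daonair} then gives that $\times L\colon A_1 \to A_2$ has full rank (equivalently one may cite \cref{c:maxanalyticspread}, which gives $\ell(\F(\Delta^{(1)})) = f_0 = \min(f_0,f_1)$). Finally, $\ell(\F(\Delta)) = f_2$ says the $f_2 \times f_0$ matrix $\log\F(\Delta)$ has full row rank, i.e. $\times L^2\colon A_1 \to A_3$ is surjective; factoring $\times L^2 = (\times L)\circ(\times L)$ through $A_2$ forces $\times L\colon A_2 \to A_3$ to be surjective, and since $f_1 \geq f_2$ surjectivity coincides with maximal rank. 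All three relevant maps then have maximal rank, so $A$ has the SLP.

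The main obstacle is the unconditional full rank of $\times L\colon A_1 \to A_2$, since this is the only step requiring genuine combinatorial input: purity of a $2$-dimensional complex must be turned into both the minimum-degree bound $f_1 \geq f_0$ and the non-bipartiteness of every component of $\Delta^{(1)}$, so that \cref{t:daonair} applies. Everything else is bookkeeping — translating \say{maximal rank} into an analytic-spread statement through \cref{lefschetzmm} and \cref{rankanalyticspread}, and the elementary remark that a surjection onto $A_3$ factoring through $A_2$ makes its second factor surjective.
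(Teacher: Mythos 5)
Your proof is correct and follows essentially the same route as the paper: identify $\times L^2\colon A_1 \to A_3$ with $\log \F(\Delta)$ via \cref{lefschetzmm} and \cref{rankanalyticspread}, handle $\times L\colon A_1 \to A_2$ with \cref{t:daonair} using non-bipartiteness of the components of $\Delta^{(1)}$, and deduce the rank of $\times L\colon A_2 \to A_3$ from the factorization $\times L^2 = (\times L)\circ(\times L)$ together with $f_2 \leq f_0 \leq f_1$. The only (harmless) difference is that you explicitly dispose of the maps out of $A_0$, which the paper leaves implicit.
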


\begin{proof}
    Let $L = x_1 + \dots + x_n \in S_1$. By \Cref{lefschetzmm}, the multiplication map $\times L^2: A(\Delta)_1 \to A(\Delta)_3$ is represented by a constant multiple of $\log(\F(\Delta))$. Since $\Delta$ is pure and $2$-dimensional, every connected component of $\Delta(1)$ contains at least one triangle. We may then conclude $f_2 \leq f_0 \leq f_1$, where the second inequality follows since the $1$-skeleton of every connected component of $\Delta$ is a graph with at least one cycle (a triangle).
    
    If $A(\Delta)$ has the SLP, then the rank of $\log(\F(\Delta))$ is $f_2$ and by \cref{rankanalyticspread} we have 
    $$
        \ell(\F(\Delta)) = \rk \log(\F(\Delta)) = f_2.
    $$
    Conversely, if $\ell(\F(\Delta)) = f_2$, then $\rk \log(\F(\Delta)) = f_2$. Let $M_1, M_2$ be the matrices representing the maps $\times L: A(\Delta)_1 \to A(\Delta)_2$, $\times L: A(\Delta)_2 \to A(\Delta)_3$ respectively. Since  $2\log(\F(\Delta)) = M_2 M_1$, we conclude $f_2 = \rk \log(\F(\Delta)) \leq \min(\rk M_1, \rk M_2)$. By \Cref{t:daonair} and since every connected component of $\Delta(1)$ contains at least one triangle, we get $\rk M_1 = f_0$ and 
    $$
        f_2 = \rk M_2 M_1 \leq \min(\rk M_1, \rk M_2) = \min (f_0, \rk M_2) = \rk M_2,
    $$
    and in particular $f_2 \leq \rk M_2 \leq \min (f_1, f_2) = f_2$, which shows the SLP of $A(\Delta)$ in characteristic zero.
\end{proof}

As a direct corollary of \cref{p:slp2dim,l:lineartype} we get the following:

\begin{corollary}[\textbf{Linear type and SLP}]\label{c:lineartypeslp}
    Let $\Delta$ be a pure $2$-dimensional simplicial complex. If $\F(\Delta)$ is of linear type, then $A(\Delta)$ has the SLP in characteristic zero.    
\end{corollary}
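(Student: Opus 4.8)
The plan is to read off the two hypotheses of \cref{p:slp2dim} directly from the assumption that $\F(\Delta)$ is of linear type, and then invoke the ``if'' direction of that proposition. Write $\Delta = \tuple{F_1, \dots, F_s}$. Since $\Delta$ is pure and $2$-dimensional, its facets are exactly its $f_2$ triangles, and $\F(\Delta) = (x_{F_1}, \dots, x_{F_s})$ is generated by these $f_2$ distinct squarefree cubics. Because the facets are incomparable and all the generators have the same degree, no one of the $x_{F_i}$ divides another, so they form a minimal generating set of size $f_2$. Applying \cref{l:lineartype} to the linear-type ideal $\F(\Delta)$ then yields immediately
$$
    \ell(\F(\Delta)) = f_2,
$$
which is precisely the numerical equality appearing in the equivalence of \cref{p:slp2dim}.

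\textbf{Verifying the standing hypothesis $f_2 \leq f_0$.} The only remaining point — and the one place where a small argument is needed rather than a direct substitution — is to check the hypothesis $f_2 \leq f_0$ of \cref{p:slp2dim}. First I would observe that, since $\Delta$ is pure of dimension $2$, every vertex lies in some triangle, so every variable of $S$ occurs among the generators and the number $n$ of variables equals $f_0$. By \cref{rankanalyticspread}, $\ell(\F(\Delta)) = \rk \log(\F(\Delta))$, and $\log(\F(\Delta))$ is an $f_2 \times f_0$ matrix, whence its rank is at most $f_0$. Combining this with the previous paragraph gives
$$
    f_2 = \ell(\F(\Delta)) = \rk \log(\F(\Delta)) \leq f_0,
$$
which is exactly the required inequality.

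\textbf{Conclusion and expected difficulty.} With both hypotheses of \cref{p:slp2dim} now in hand — namely $f_2 \leq f_0$ together with $\ell(\F(\Delta)) = f_2$ — the ``if'' direction of that proposition delivers at once that $A(\Delta)$ has the SLP in characteristic zero, completing the argument. I do not anticipate any genuine obstacle: the mathematical content is entirely carried by \cref{p:slp2dim,l:lineartype}, and the only step that requires any thought is the bound $f_2 \leq f_0$. This follows from the universal inequality $\ell(I) \leq n$ for an ideal in $n$ variables, equivalently from the trivial bound on the rank of the log-matrix, and it is what makes the hypothesis of \cref{p:slp2dim} automatic in the linear-type setting.
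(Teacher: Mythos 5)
Your proof is correct and follows the paper's own route: the paper likewise obtains \cref{c:lineartypeslp} by combining \cref{l:lineartype} (linear type gives $\ell(\F(\Delta)) = f_2$, the number of minimal generators) with the ``if'' direction of \cref{p:slp2dim}. The only difference is that you explicitly verify the standing hypothesis $f_2 \leq f_0$ of \cref{p:slp2dim} via $f_2 = \ell(\F(\Delta)) = \rk \log(\F(\Delta)) \leq f_0$ (using \cref{rankanalyticspread}), a step the paper treats as immediate and leaves implicit.
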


Note that \Cref{c:lineartypeslp} gives us a connection between different properties of $\F(\Delta)$ and $\Nn(\Delta)$, where $\Delta$ is a $2$-dimensional complex. Namely information on the Rees algebra of $\F(\Delta) \subset S = \K[x_1, \dots, x_n]$ can be translated into information on whether the algebra 
$$
    S/(\Nn(\Delta), x_1^2, \dots, x_n^2)
$$
has the SLP.

In \cite{alisara}, the authors studied squarefree monomial ideals of linear type via facet ideals. By using \cref{c:lineartypeslp} and the main results from \cite{alisara}, we are able to find examples of $2$-dimensional simplicial complexes $\Delta$ such that $A(\Delta)$ satisfies the SLP. 

\begin{definition}
    Let $\Delta$ be a simplicial complex. We denote by L$(\Delta)$ the graph with vertices the facets of $\Delta$, and two facets $F_1, F_2$ are connected if $F_1 \cap F_2 \neq \emptyset$.
\end{definition}

\begin{theorem}[\cite{alisara}]\label{t:lineartype}
    Let $\Delta$ be a simplicial complex. If $L(\Delta)$ does not have even cycles, then $\F(\Delta)$ is of linear type. In particular, when $\Delta$ is a simplicial forest, $\F(\Delta)$ is of linear type.
\end{theorem}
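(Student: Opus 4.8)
The plan is to work with the defining ideal of the Rees algebra and to show that, under the hypothesis on $L(\Delta)$, it has no essential generators of high degree. Write $\F(\Delta) = (x_{F_1}, \dots, x_{F_s})$ and present the Rees ring as $S[w_1, \dots, w_s]/J$, where the surjection sends $w_i \mapsto x_{F_i}t$ and $J$ is its kernel. By definition $\F(\Delta)$ is of linear type exactly when the natural surjection $\text{Sym}(\F(\Delta)) \to S[\F(\Delta)t]$ is an isomorphism, equivalently when $J$ is generated by its part $J_1$ of degree one in the $w_i$; this $J_1$ consists of the Taylor syzygies $\frac{x_{F_i \cup F_j}}{x_{F_i}} w_i - \frac{x_{F_i \cup F_j}}{x_{F_j}} w_j$. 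Since $\F(\Delta)$ is a monomial ideal, $J$ is the toric ideal of the configuration $\{e_v\}_{v} \cup \{(\mathbf{1}_{F_i}, 1)\}_i$ and is generated by binomials $x^a w^\alpha - x^b w^\beta$ with $|\alpha| = |\beta|$ and coprime terms. So it suffices to prove that every minimal binomial generator of $J$ has $w$-degree one.

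First I would translate the existence of a minimal generator of $w$-degree $d \geq 2$ into combinatorics. Such a binomial records a monomial identity $x^a \prod_i x_{F_i}^{\alpha_i} = x^b \prod_j x_{F_j}^{\beta_j}$, that is, a vertex-by-vertex balancing $a_v + \sum_{i \colon v \in F_i} \alpha_i = b_v + \sum_{j \colon v \in F_j}\beta_j$, where coprimality forces $\gcd(x^a, x^b)=1$ and $\supp(\alpha) \cap \supp(\beta) = \emptyset$. The goal is to extract from this data an even cycle of $L(\Delta)$ whose facets alternate between $\supp(\alpha)$ and $\supp(\beta)$. Concretely, I would start at a facet $F_{i_0}$ with $\alpha_{i_0}>0$, locate a vertex of $F_{i_0}$ whose balance is met by a $\beta$-facet $F_{j_1}$ (so $F_{i_0}$ and $F_{j_1}$ are adjacent in $L(\Delta)$), then continue through an $\alpha$-facet $F_{i_1}$ adjacent to $F_{j_1}$, and so on. Finiteness closes the alternating walk; since it is properly two-colored by the two disjoint supports, any cycle it contains has even length, and from a shortest such walk I would read off a genuine even cycle of $L(\Delta)$. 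If $L(\Delta)$ has no even cycle this is impossible, so $d=2$ cannot occur and, inductively, neither can any $d \geq 2$; hence $J = (J_1)$ and $\F(\Delta)$ is of linear type.

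The step I expect to be the main obstacle is exactly this extraction: producing a \emph{simple, even} cycle of $L(\Delta)$ rather than a mere closed alternating walk, while correctly bookkeeping the vertex variables. One must use coprimality and $\supp(\alpha) \cap \supp(\beta) = \emptyset$ to guarantee that the walk never stalls and alternates strictly between the two sides, and one must argue that the shortest walk produced has distinct facets and edges in $L(\Delta)$, so that it is an honest cycle. Handling the direct occurrences of the $x_v$ (the exponents $a_v, b_v$) without destroying the alternation---for instance, reducing to the case where as many $a_v, b_v$ as possible vanish---is where the real care is needed, and checking that $d=2$ already forces a $4$-cycle is the base case that anchors the induction.

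Finally, the ``in particular'' statement needs a separate argument, since a simplicial forest may well have even cycles in $L(\Delta)$ (for example several triangles sharing one common edge give $L(\Delta) = K_4$, yet the facet ideal is still of linear type). Here I would use the leaf order from the proof of \cref{p:forestgcdminors}: arrange the facets so each $F_i$ is a leaf of $\tuple{F_i, \dots, F_s}$ carrying a free vertex $v_i$ contained in no later facet. The plan is to show this ordering exhibits $x_{F_1}, \dots, x_{F_s}$ as a $d$-sequence and then invoke Huneke's criterion that ideals generated by a $d$-sequence are of linear type; the free vertices are precisely what make the relevant colon ideals $(x_{F_1}, \dots, x_{F_{k-1}}) \colon x_{F_k}$ collapse, since a vertex lying in a single facet cannot be balanced against any other generator. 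This keeps the forest case elementary and independent of the even-cycle analysis above.
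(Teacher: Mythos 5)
The first thing to note is that the paper itself gives no proof of this theorem: it is imported wholesale from \cite{alisara}, whose actual main theorem is finer than the statement quoted here. In \cite{alisara} the obstruction to linear type is a \emph{simplicial even walk} (a closed alternating sequence of facets in which repetitions are allowed, subject to extra intersection conditions), and both sentences of the present theorem are deduced from that result; they are parallel corollaries, not one a special case of the other. Your observation that the ``in particular'' is \emph{not} a formal consequence of the first sentence --- since a simplicial forest can have $L(\Delta)=K_4$, e.g.\ four triangles sharing a common edge --- is correct and perceptive, and it is exactly why \cite{alisara} works with even walks rather than even cycles of $L(\Delta)$. Your plan for the first sentence follows the same general route as \cite{alisara} (binomial generators of the toric Rees ideal give a vertex-by-vertex balancing, from which one extracts an alternating structure), but the step you yourself flag as the main obstacle is the entire content of the theorem: a closed walk whose edges all join $\supp(\alpha)$-facets to $\supp(\beta)$-facets need not contain \emph{any} cycle of $L(\Delta)$, because it can backtrack ($F \to G \to F$ traverses a single edge and its edge set is a tree). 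Ruling out this degeneracy is precisely what the non-degeneracy conditions in the definition of an even walk accomplish, and ``finiteness closes the walk'' plus ``a shortest walk gives an honest cycle'' does not supply that argument. Moreover, the claimed induction on the $w$-degree $d$ is never set up: nothing in your sketch about excluding $d=2$ feeds into excluding $d=3$.

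The second half contains a concrete error: the leaf order does \emph{not} in general exhibit the facet generators as a $d$-sequence. Take the simplicial tree $\Delta = \tuple{\{a,b,c\},\{b,c,d\},\{c,d,e\}}$ with $f_1 = abc$, $f_2 = bcd$, $f_3 = cde$. This ordering satisfies your prescription (each $F_i$ is a leaf of $\tuple{F_i,\dots,F_3}$, with free vertices $a$, then $b$, lying in no later facet), yet
\[
(f_1) : f_2 f_3 \;=\; (abc) : bc^2d^2e \;=\; (a) \;\neq\; (ab) \;=\; (abc) : cde \;=\; (f_1) : f_3,
\]
so the $d$-sequence condition fails at $i=1$, $k=3$; the reverse ordering fails symmetrically, since $(cde):(bcd)(abc) = (e) \neq (de) = (cde):(abc)$. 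A suitable ordering does exist here --- $abc,\, cde,\, bcd$ is a $d$-sequence --- but that ordering is \emph{also} a leaf order with free vertices in your sense, so your prescription cannot distinguish the good orders from the bad ones, and you would still owe a proof that a good order exists for every simplicial forest. As it stands, both halves of the proposal have genuine gaps: the crux of the first is acknowledged but left open, and the key claim of the second is contradicted by a three-facet tree.
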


\begin{remark}
    Note that in \cref{e:wslp}, the simplicial complex $\Delta$ is such that $L(\Delta)$ has an even cycle, and $A(\Delta)$ does not have SLP. In particular, as we saw in~\cref{e:notlineartype}, $\F(\Delta)$ is not of linear type.
\end{remark}

A direct consequence of \cref{t:lineartype,c:lineartypeslp} is that pure $2$-dimensional simplicial forests have the SLP. We can in fact prove a stronger statement. In order to show this result, we first show the structure of a few incidence ideals of simplicial forests.

\begin{proposition}\label{p:incidenceforest}
    Let $\Delta = \tuple{F_1, \dots, F_s}$ be a $d$ dimensional pure simplicial forest. Then the $(i, d)$-incidence complex of $\Delta$ is a simplicial forest for every $i < d$. 
\end{proposition}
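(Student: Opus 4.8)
The plan is to prove the stronger combinatorial statement that \emph{every} subcomplex of $H_\Delta(i,d)$ (generated by a subset of its facets) has a leaf; this suffices, because a complex in which every subcomplex has a leaf has each of its connected components a tree, and hence is a forest. First I would record the facet structure of the incidence complex. Since $\Delta$ is pure of dimension $d$, its facets are exactly the $d$-faces $F_1, \dots, F_s$, so by definition the facets of $H_\Delta(i,d)$ are the sets $G_k = \{\sigma : \sigma \text{ an } i\text{-face},\ \sigma \subseteq F_k\}$, one for each $F_k$. The assignment $F_k \mapsto G_k$ is injective (the union of the $(i+1)$-subsets of $F_k$ recovers $F_k$) and no $G_k$ is contained in another, so these are genuinely the facets and they biject with the facets of $\Delta$. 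Consequently a subcomplex $\tuple{G_{k_1}, \dots, G_{k_r}}$ of $H_\Delta(i,d)$ is precisely the incidence complex $H_{\Delta'}(i,d)$ of the subcomplex $\Delta' = \tuple{F_{k_1}, \dots, F_{k_r}}$ of $\Delta$. Since subcomplexes of forests are forests, $\Delta'$ is again a pure simplicial forest, and in particular it has a leaf.

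The heart of the argument is transferring a leaf of $\Delta'$ to a leaf of $H_{\Delta'}(i,d)$. Suppose $F$ is a leaf of $\Delta'$ with companion $G$ (or $F$ is the only facet, in which case $G_F$ is trivially a leaf). For any facet $F' \neq F$ of $\Delta'$, an $i$-face lies in both $F$ and $F'$ exactly when it is contained in $F \cap F'$, so
$$
    G_F \cap G_{F'} = \{\sigma : \sigma \text{ an } i\text{-face},\ \sigma \subseteq F \cap F'\}.
$$
The leaf condition $F \cap F' \subseteq F \cap G$ then immediately gives $G_F \cap G_{F'} \subseteq G_F \cap G_G$, and since $G \neq F$ forces $G_G \neq G_F$ by injectivity, the facet $G_F$ is a leaf of $H_{\Delta'}(i,d)$ with companion $G_G$. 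This shows that every subcomplex of $H_\Delta(i,d)$ has a leaf.

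To finish, I would observe that each connected component $C$ of $H_\Delta(i,d)$ is connected by definition, and every subcomplex of $C$ is a subcomplex of $H_\Delta(i,d)$, hence has a leaf; therefore $C$ is a tree and $H_\Delta(i,d)$ is a forest. I expect the only real subtlety to be bookkeeping rather than difficulty: one must check that the companion $G_G$ remains a distinct facet after passing to the incidence complex, and that the degenerate cases (a single facet, or an empty intersection $F \cap G$ giving $G_F \cap G_G = \emptyset$) are handled by the same inclusion. The genuine content is the monotonicity of $F \cap F' \mapsto \{i\text{-faces} \subseteq F \cap F'\}$, which is exactly what lets the leaf structure of $\Delta$ survive in $H_\Delta(i,d)$; everything else is translation of definitions.
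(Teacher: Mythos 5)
Your proposal is correct and follows essentially the same route as the paper's proof: transfer a leaf of $\Delta$ (or of a subcomplex $\Delta'$) to a leaf of the incidence complex via the observation that $\sigma \mapsto \{\text{$i$-faces of }\sigma\}$ preserves the inclusions in the leaf condition, then use the correspondence between subcomplexes of $H_\Delta(i,d)$ and incidence complexes of subcomplexes of $\Delta$ to conclude every subcomplex has a leaf. The only cosmetic difference is that you handle the degenerate case $F\cap F' $ too small to contain an $i$-face uniformly through the inclusion $G_F \cap G_{F'} \subseteq G_F \cap G_G$, where the paper treats the isolated-simplex case separately, and you make explicit the injectivity check that the paper leaves implicit.
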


\begin{proof}
    We will first show that a leaf of $\Delta$ corresponds to a leaf of $H_{\Delta}(i, d)$. Assume without loss of generality that $F_1$ is a leaf of $\Delta$ and $G \neq F_1$ is a facet of $\Delta$ such that $F_1 \cap F \subseteq F_1 \cap G$ for every facet $F \in \Delta$, $F \neq F_1$. By the definition of incidence ideals, for every facet $F$ of $\Delta$, there exists a unique facet $F'$ of $H_\Delta(i, d)$ such that the vertices of $F'$ are the $i$-faces of $F$. 
    
    If $|F_1 \cap F| < i + 1$ for every facet $F$ of $\Delta$, then $F_1'$ is an isolated simplex of $H_\Delta(i,d)$. If there exists a facet $F$ of $\Delta$ such that $|F_1 \cap F| \geq i + 1$, then since $F_1 \cap F \subseteq F_1 \cap G$, we conclude every subset of size $i + 1$ of $F_1 \cap F$ is also a subset of size $i + 1$ of $G$. Subsets of size $i + 1$ are exactly the $i$-faces of a simplex, in particular 
    $$
        F_1' \cap F' \subseteq F_1' \cap G'
    $$
    and hence $F_1'$ is a leaf of $H_\Delta(i, d)$. 

    Next, consider the complex $\Delta \setminus F_j = \tuple{F_1, \dots, F_{j - 1}, F_{j + 1}, \dots, F_s}$. The $(i, d)$-incidence complex of $\Delta \setminus F_j$ is $H_{\Delta \setminus F_j}(i, d) = \tuple{F_1', \dots, F_{j - 1}', F_{j + 1}', \dots, F_s'}$, so every connected component of every subcomplex $H_\Delta(i, d)' = \tuple{F_{j_1}', \dots, F_{j_r}'}$ of $H_\Delta(i, d)$ has a leaf, which implies $H_\Delta(i, d)$ is a simplicial forest. 
\end{proof}

\begin{theorem}[\textbf{SLP in positive characteristics of $2$-dimensional simplicial forests}]\label{t:forestslp}
    Let $\Delta$ be a simplicial forest where every connected component of $\Delta$ is a $2$-dimensional simplicial tree. Then $A(\Delta)$ has the SLP in every characteristic $\neq 2$.
\end{theorem}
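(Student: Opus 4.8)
The plan is to reduce the SLP of $A(\Delta)$ to the full rank of a short list of multiplication maps, to identify each of these with an incidence matrix by way of \cref{lefschetzmm}, and then to use the forest hypothesis to control the ranks of these matrices in every characteristic different from $2$. Since each connected component of $\Delta$ is a $2$-dimensional tree, the complex $\Delta$ is pure of dimension $2$ and every component contains a triangle; hence $A(\Delta)$ has socle degree $3$ with $\dim A(\Delta)_i = f_{i-1}$. Exactly as in the proof of \cref{p:slp2dim}, this yields $f_2 \le f_0 \le f_1$ and reduces the problem to showing that the three maps $\times L\colon A(\Delta)_1 \to A(\Delta)_2$, $\times L\colon A(\Delta)_2 \to A(\Delta)_3$ and $\times L^2\colon A(\Delta)_1 \to A(\Delta)_3$ have full rank, where $L = x_1 + \dots + x_n$. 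By \cref{lefschetzmm} these coincide, up to the scalars $1$, $1$ and $2$, with the incidence matrices of $H_\Delta(0,1)$ (the $1$-skeleton of $\Delta$), of $H_\Delta(1,2)$, and of $H_\Delta(0,2) = \Delta$; the scalar $2$ is invertible precisely because the characteristic is not $2$.

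For the first map I would invoke \cref{t:daonair}. The inequality $f_0 \le f_1$ provides the hypothesis on the number of edges, and since every connected component of $\Delta^{(1)}$ contains a triangle it contains an odd cycle and is therefore not bipartite. As the characteristic is different from $2$, \cref{t:daonair} shows that $\times L\colon A(\Delta)_1 \to A(\Delta)_2$ is injective, hence of full rank.

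For the remaining two maps I would first apply \cref{p:incidenceforest}, which guarantees that $H_\Delta(1,2)$ and $H_\Delta(0,2) = \Delta$ are themselves simplicial forests. By \cref{l:lessfacetsforests} each of these forests has at least as many vertices as facets, so its incidence matrix has at least as many columns as rows, and having full rank amounts to the rows being independent. The essential point, which I expect to be the main obstacle, is to rule out a drop in rank in positive characteristic: the characteristic-zero reasoning behind \cref{p:slp2dim}, which passes through the analytic spread, no longer controls the rank once the base field is changed. This is resolved by \cref{p:forestgcdminors}: the greatest common divisor of the maximal minors of the incidence matrix of a simplicial forest equals $1$, so some maximal minor is $\pm 1$ and hence nonzero over every field. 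Therefore the incidence matrices of $H_\Delta(1,2)$ and of $\Delta$ have full row rank in all characteristics, so that $\times L\colon A(\Delta)_2 \to A(\Delta)_3$ is surjective and, after clearing the unit $2$, so is $\times L^2\colon A(\Delta)_1 \to A(\Delta)_3$. The three maps together exhaust the conditions from the first paragraph, so $A(\Delta)$ has the SLP in every characteristic different from $2$.
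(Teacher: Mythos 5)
Your argument is correct for \emph{pure} complexes, and on that case it follows the paper's proof closely: \cref{t:daonair} for the first map, and \cref{p:incidenceforest}, \cref{l:lessfacetsforests} and \cref{p:forestgcdminors} for the other two. (The paper first gets characteristic zero via linear type, i.e.\ \cref{c:lineartypeslp} and \cref{t:lineartype}, and only then invokes \cref{p:forestgcdminors}; your shortcut of going directly through the gcd-of-minors statement is legitimate and slightly cleaner.) The genuine gap is your opening claim that the hypothesis forces $\Delta$ to be pure. It does not: the paper's definition of a simplicial tree imposes no purity, and a $2$-dimensional simplicial tree can have facets of dimension $1$ --- for instance $\langle \{1,2,3\}, \{3,4\}\rangle$ is a connected $2$-dimensional simplicial tree that is not pure. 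This is exactly why the second half of the paper's proof is devoted to the non-pure case, which your proof silently omits.

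The omission is not cosmetic, because two of your identifications fail without purity. First, $H_\Delta(0,2) \neq \Delta$ in general: the matrix representing $\times L^2\colon A(\Delta)_1 \to A(\Delta)_3$ has one row per \emph{triangle} of $\Delta$, not one row per facet, so applying \cref{p:forestgcdminors} to ``the incidence matrix of $\Delta$'' (which would also have rows for edge-facets) addresses the wrong matrix. Second, \cref{p:incidenceforest} is stated for pure forests, so it cannot be applied to $\Delta$ itself. The paper's repair is to pass to the subcomplex $\bar\Delta$ generated by the triangles of $\Delta$: this is a pure $2$-dimensional forest, the matrices for $\times L\colon A(\Delta)_2 \to A(\Delta)_3$ and $\times L^2\colon A(\Delta)_1 \to A(\Delta)_3$ differ from those of $\bar\Delta$ only by zero columns (coming from vertices and edges lying in no triangle), and one then checks via \cref{l:lessfacetsforests} that $\dim A(\Delta)_1 \geq \dim A(\Delta)_3$ and $\dim A(\Delta)_2 \geq \dim A(\Delta)_3$, so that full rank still means full row rank and the zero columns are harmless. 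Incorporating this reduction would complete your proof. (One further omission --- the maps $\times L^j\colon A(\Delta)_0 \to A(\Delta)_j$ are never examined --- is shared with the paper's own proof, so I do not count it against you.)
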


\begin{proof}
    Since every connected component is $2$-dimensional, every connected component of $\Delta(1)$ contains a triangle. By \cref{t:daonair}, the map $\times L: A(\Delta)_1 \to A(\Delta)_2$ has full rank in every odd characteristic, where $L = x_1 + \dots + x_n$. When $\Delta$ is pure, \cref{c:lineartypeslp,t:lineartype} imply the maps $\times L: A(\Delta)_2 \to A(\Delta)_3$ and $\times L^2: A(\Delta)_1 \to A(\Delta)_3$ have full rank in characteristic zero. \cref{p:forestgcdminors} then imply the maps have full rank in every characteristic. When $\Delta$ is not pure, let $\bar \Delta = \tuple{T_1, \dots, T_s}$ where $T_i$ are exactly the facets of $\Delta$ that are triangles. The maps $\times L^2: A(\Delta)_1 \to A(\Delta)_3$ and $\times L^2: A(\bar \Delta)_1 \to A(\bar \Delta)_3$, where $L$ is taken to be the sum of the variables on the corresponding polynomial ring, are represented by the matrices $M_1, M_2$ respectively. The only difference between $M_1$ and $M_2$ is that $M_1$ might have extra columns where every entry is zero (if such a column exists, it corresponds to a vertex of $\Delta$ that is not in any triangle of $\Delta$), and so they have the same rank by \cref{l:lessfacetsforests}. Since $\bar \Delta$ is a pure simplicial forest, we conclude $\times L^2: A(\bar \Delta)_1 \to A(\bar \Delta)_3$ has full rank in every characteristic by the arguments above. From the fact that $\bar \Delta$ is a simplicial forest, we know 
    $$
        \dim A(\Delta)_1 \geq \dim A(\bar \Delta)_1 \geq \dim A(\bar \Delta)_3 = \dim A(\Delta)_3 = \rk M_1 = \rk M_2
    $$
    and so $M_1$ also has full rank. Similarly, the matrix that represents the map $\times L: A(\Delta)_2 \to A(\Delta)_3$ is the same matrix that represents the map $\times L: A(\bar \Delta)_2 \to A(\bar \Delta)_3$, except it has one zero column for each facet of $\Delta$ that is an edge. In particular, since $H_\Delta(1,2)$ is a simplicial forest where each facet is either an isolated vertex, or a triangle, we conclude $\dim A(\Delta)_2 \geq \dim A(\Delta)_3$ and so adding the extra zero columns in $\times L: A(\Delta)_2 \to A(\Delta)_3$ does not change the rank in any characteristic.
\end{proof}

\begin{remark}
    Let $X = \{i + 1 \st \dim A(\Delta)_i \leq \dim A(\Delta)_{i + 1}\}$, where $\Delta$ is a simplicial complex. \cref{t:forestslp} gives us a family of complexes $\Delta$ such that $A(\Delta)$ satisfies the SLP in every characteristic $p$ such that $p$ does not divide the least common multiple of $X$. Since the facet ideals of the complexes in this family are of linear type, \cref{t:forestslp} gives a partial answer to \cite[Question 62]{lefschetzmm}.
\end{remark}

\section{Symbolic powers and Lefschetz properties}\label{s:symbolic}

We now turn our attention to the study of symbolic powers and Lefschetz properties of squarefree monomial ideals.
In this section, we focus on the connections between the Lefschetz properties of $A(\Delta)$ and the symbolic powers of $\F(\Delta^{(i)})$. Throughout this section, $S$ will always be a polynomial ring over a field $\K$ of characteristic zero.

\begin{definition}
    Let $I$ be an ideal of $S$. The \textbf{$m$-th symbolic power} of $I$, denoted $I^{(m)}$, is defined as 
    $$
        I^{(m)} = \cap_{P \in \text{Ass(I)}} (I^m S_P \cap S),
    $$
    where $S_P$ denotes localization at the prime $P$.

    When $I$ is a radical ideal (in particular, when $I$ is a squarefree monomial ideal), the following holds for every $m$
    $$
    I^{(m)} = \cap_{P \in \text{Ass}(I)} P^m. 
    $$    
\end{definition}

Although in general $I^{(m)} \supset I^m$ always holds, the converse fails for example when 
$$
    I = (xy, xz, yz) \subset \K[x,y,z],
$$
since $xyz \in I^{(2)}$, but $xyz \not \in I^2$.

When a squarefree monomial ideal satisfies $I^{(m)} = I^m$ for every $m$, we say the ideal is \textbf{normally torsion-free}. The class of normally torsion-free squarefree monomial ideals can be characterized in terms of hypergraph and integer programming properties. Finding classes of squarefree monomial ideals that are normally torsion-free is an important problem in Combinatorial Commutative Algebra. Such classes include for example edge ideals of bipartite graphs and facet ideals of simplicial forests. We refer the reader to \cite{MR2932582} for more details. One particular result that we will need is the following:

\begin{proposition}\label{p:ntfanalyticspread}\cite{MR2932582}
    If $I \subset S$ is a squarefree monomial ideal such that $\ell(I) = \dim S$, then $I$ is not normally torsion-free.
\end{proposition}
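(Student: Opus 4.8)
The plan is to prove the contrapositive: assuming $I$ is normally torsion-free, I will deduce that its analytic spread is strictly smaller than $\dim S = n$. The starting observation is purely combinatorial. Since $I$ is a squarefree (hence radical) monomial ideal, its associated primes $\operatorname{Ass}(S/I)$ coincide with its minimal primes, and these are the monomial primes $P_C = (x_i : i \in C)$ attached to the minimal vertex covers $C$ of the associated clutter. Unless $I = \m$ (the degenerate case where $I$ is generated by variables, which does not occur for facet ideals of complexes of positive dimension), none of these minimal primes equals the graded maximal ideal $\m = (x_1, \dots, x_n)$. By definition, normal torsion-freeness means $\operatorname{Ass}(S/I^m) = \operatorname{Ass}(S/I)$ for all $m \ge 1$, so in particular $\m \notin \operatorname{Ass}(S/I^m)$ for every $m$.

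The main tool I would invoke is McAdam's theory of asymptotic prime divisors. By Brodmann's theorem the sets $\operatorname{Ass}(S/I^m)$ stabilize for $m \gg 0$ to a set $A^*(I)$, and similarly the associated primes of the integral closures $\overline{I^m}$ stabilize to a set $\overline{A}^*(I)$; one always has the containment $\overline{A}^*(I) \subseteq A^*(I)$. McAdam's theorem characterizes the integral-closure asymptotic primes geometrically: for a prime $P$ one has $P \in \overline{A}^*(I)$ if and only if $\ell(I_P) = \operatorname{ht} P$. Applying this at $P = \m$, and using that for a homogeneous ideal the analytic spread is computed at the irrelevant maximal ideal (so $\ell(I_\m) = \ell(I)$ while $\operatorname{ht} \m = \dim S = n$), the hypothesis $\ell(I) = n$ forces $\m \in \overline{A}^*(I)$.

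Combining the two steps finishes the argument: from $\m \in \overline{A}^*(I) \subseteq A^*(I)$ we get $\m \in \operatorname{Ass}(S/I^m)$ for all large $m$, contradicting the conclusion of the first paragraph that $\m$ is never an associated prime of a power of a normally torsion-free $I$. I expect the only delicate points, and hence where I would spend the most care, to be the citation-level justifications that (i) the graded analytic spread used in the paper agrees with the local analytic spread of $I_\m$ appearing in McAdam's statement, and (ii) the containment $\overline{A}^*(I) \subseteq A^*(I)$, which reflects the heuristic that passing to integral closures removes embedded associated primes rather than creating new ones. One should also record the harmless degenerate exception $I = \m$, where the statement fails but which is excluded in every application, since facet ideals of simplicial complexes of dimension at least one are generated in degree at least two.
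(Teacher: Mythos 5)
The paper never proves this proposition --- it is quoted directly from Morey--Villarreal \cite{MR2932582} --- so your argument can only be measured against the standard proof in that reference, and it essentially \emph{is} that proof: your two tools, McAdam's characterization ($P \in \overline{A}^*(I)$ if and only if $\ell(I_P) = \operatorname{ht} P$, valid here because a polynomial ring over a field is locally quasi-unmixed) and the containment $\overline{A}^*(I) \subseteq A^*(I)$, combined with the observation that normal torsion-freeness forces $\operatorname{Ass}(S/I^m) = \operatorname{Min}(I)$ for all $m$, give a complete and correct argument. One packaging variant worth knowing: for a squarefree monomial ideal, each symbolic power $I^{(m)} = \bigcap_{P \in \operatorname{Min}(I)} P^m$ is an intersection of integrally closed ideals, hence integrally closed; so normal torsion-freeness makes every $I^m$ integrally closed and gives $\overline{A}^*(I) = A^*(I)$ outright, letting you skip the general containment theorem --- this is how the monomial literature usually phrases the step. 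Your caveat about $I = \m$ is genuine and not merely cosmetic: $\m$ is squarefree, normally torsion-free, and satisfies $\ell(\m) = \dim S$, so the proposition as literally stated does have this one exception, and the clean hypothesis should be $I \neq \m$ (equivalently $\m \notin \operatorname{Ass}(S/I)$); as you note, this is harmless in the paper's applications, where the ideals in question are generated in degree at least $2$.
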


Our next goal is to construct families of level monomial algebras failing SLP. Here we focus on the well known construction of "whiskering", but similar arguments can also be made for the "grafting" construction (see~\cites{whiskered2,faridiforest} for the definition). For the exact analogue statements on grafted complexes see~\cite{phd}.

\cref{p:ntfanalyticspread} implies the following general relation between SLP and symbolic powers:

\begin{theorem}[\textbf{SLP and symbolic powers are not compatible}]\label{t:slpsymbolic}
    Let $\Delta$ be a pure simplicial complex with at least as many facets as vertices. If $\F(\Delta)$ is normally torsion-free, $A(\Delta)$ fails the SLP in characteristic zero.
\end{theorem}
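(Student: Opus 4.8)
**

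The plan is to combine the analytic-spread computation coming from Hausel's theorem with the obstruction to normal torsion-freeness recorded in \cref{p:ntfanalyticspread}. The key observation is that the facet ideal $\F(\Delta)$ is exactly the top incidence ideal $I_\Delta(0,d) = \F(\Delta^{(d)})$, so its analytic spread is governed by whether the multiplication map $\times L^d : A(\Delta)_1 \to A(\Delta)_{d+1}$ has full rank, via \cref{lefschetzmm} and \cref{rankanalyticspread}.

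First I would set up the artinian level algebra. Since $\Delta$ is pure, \cref{p:purelevel} tells us $A(\Delta)$ is level, of socle degree $d+1$ (the dimension of $\Delta$ is $d$, so the top nonzero graded piece is $A(\Delta)_{d+1}$). Now I would argue by contraposition: suppose $A(\Delta)$ has the SLP in characteristic zero. Applying the SLP to the map $\times L^{d} : A(\Delta)_1 \to A(\Delta)_{d+1}$, where $L = x_1 + \dots + x_n$, this map must have full rank. The hypothesis that $\Delta$ has at least as many facets as vertices means $\dim A(\Delta)_{d+1} = f_d \geq f_0 = n = \dim A(\Delta)_1$, so full rank forces the map to be injective, i.e.\ of rank $n$.

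Next I would translate this rank statement into an analytic-spread statement. By \cref{lefschetzmm}, this multiplication map is (up to the nonzero constant $d!$) the incidence matrix of $H_\Delta(0,d)$, whose facet ideal is $\F(\Delta^{(d+1)}) = \F(\Delta)$; equivalently the map is represented by a constant multiple of $\log(\F(\Delta))$. Full rank $n$ of this map, together with \cref{rankanalyticspread}, gives $\ell(\F(\Delta)) = \rk \log(\F(\Delta)) = n = \dim S$. But then \cref{p:ntfanalyticspread} says that an ideal with $\ell(I) = \dim S$ cannot be normally torsion-free, contradicting the hypothesis that $\F(\Delta)$ is normally torsion-free. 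Hence $A(\Delta)$ fails the SLP.

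The main point requiring care is bookkeeping of the degree and the socle degree: I must confirm that the relevant map is $\times L^d$ from degree $1$ to degree $d+1$ and that $\dim A(\Delta)_1 = n$ equals the number of vertices, so that the \say{at least as many facets as vertices} hypothesis indeed converts the full-rank conclusion into injectivity and hence rank $n$. The rest is a clean chain of citations; there is no genuine analytic obstacle, since the heavy lifting (the interpretation of multiplication maps as incidence matrices and the analytic-spread criterion for normal torsion-freeness) is already supplied by \cref{lefschetzmm}, \cref{rankanalyticspread} and \cref{p:ntfanalyticspread}.
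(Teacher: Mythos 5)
Your proof is correct and follows essentially the same route as the paper's: identify the rank of $\times L^d \colon A(\Delta)_1 \to A(\Delta)_{d+1}$ with $\ell(\F(\Delta))$ via \cref{lefschetzmm} and \cref{rankanalyticspread}, use the facets-at-least-vertices hypothesis to convert full rank into rank $n = \dim S$, and conclude with \cref{p:ntfanalyticspread}. The only cosmetic issues are the opening reference to Hausel's theorem, which your argument never actually invokes (and is not needed here), and the stray index in $\F(\Delta^{(d+1)})$, which should read $\F(\Delta^{(d)}) = \F(\Delta)$.
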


\begin{proof}
    Let $d$ denote the dimension of $\Delta$. Since $\Delta$ is pure and has as many facets as vertices, $\F(\Delta)$ is a squarefree monomial ideal generated in the same degree. \cref{lefschetzmm} then implies the rank of the multiplication map $\times L^{d}: A(\Delta)_1 \to A(\Delta)_{d + 1}$, where $L = x_1 + \dots + x_n$ is $\ell(\F(\Delta))$. Since $\Delta$ has at least as many facets as vertices, the map $\times L^d: A(\Delta)_1 \to A(\Delta)_{d + 1}$ has full rank if and only if it is injective, and in particular $\rk \times L^d = \dim A(\Delta)_1 = \dim S$. The result then follows by \cref{p:ntfanalyticspread}.
\end{proof}

\begin{remark}\label{r:atleast}
    Note that \cref{t:slpsymbolic} only holds when $\Delta$ has at least as many facets as vertices. Pure simplicial forests of dimension $2$ are examples of simplicial complexes with less facets than vertices that are both normally torsion-free and satisfy the SLP in characteristic zero.
\end{remark}

\cref{t:slpsymbolic} allows us to construct families of simplicial complexes $\Delta$ such that $A(\Delta)$ fails the SLP.

\begin{definition}
    Let $\Delta$ be a simplicial complex and 
    $$
        \F(\Delta) = (x_{i_{1,1}}, \dots, x_{i_{1,r_1}}) \cap \dots \cap (x_{i_{s,1}}, \dots, x_{i_{s,r_s}}) \subset S = \K[x_1, \dots, x_n]
    $$
    the primary decomposition of its facet ideal. The ideal
    $$
        J(\Delta) = (x_{i_{1,1}}\dots x_{i_{1,r_1}}, \dots, x_{i_{s,1}}\dots x_{i_{s,r_s}}) \subset S
    $$
    is called the \textbf{cover ideal} of $\Delta$.
\end{definition}

\begin{proposition}[Corollary 4.38~\cite{MR2932582}]\label{p:dualntf}
        Let $H$ be a unimodular $r$-hypergraph and $\Delta$ the corresponding simplicial complex. Then $\F(\Delta)$ and $J(\Delta)$ are normally torsion-free.
\end{proposition}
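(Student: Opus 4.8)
The plan is to translate the two algebraic claims into a single combinatorial property of the underlying clutter and then exploit total unimodularity. Let $\mathcal{C}$ denote the clutter whose edges are the facets of $\Delta$ (equivalently, the edges of $H$), so that $\F(\Delta) = I(\mathcal{C})$ is its edge ideal. The minimal primes of $\F(\Delta)$ are generated by the variables indexed by the minimal vertex covers of $\mathcal{C}$, and these minimal covers are precisely the edges of the blocker clutter $b(\mathcal{C})$; hence the cover ideal $J(\Delta)$ equals $I(b(\mathcal{C}))$. It therefore suffices to prove that the edge ideals of both $\mathcal{C}$ and $b(\mathcal{C})$ are normally torsion-free.

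First I would invoke the standard dictionary between commutative algebra and integer programming: a squarefree monomial ideal presented as an edge ideal $I(\mathcal{C})$ is normally torsion-free if and only if $\mathcal{C}$ has the max-flow min-cut (MFMC) property, that is, the covering system $\{x \geq 0 : A x \geq \mathbf{1}\}$ with $A = M(H)$ is totally dual integral and defines an integral polyhedron. This converts the requirement $I^{(m)} = I^m$ for all $m$ into an integrality statement about minors and LP duals of the incidence matrix, which is exactly the setting in which the hypothesis ``$H$ unimodular'' is phrased.

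Next I would handle $\F(\Delta)$ directly. Since $H$ is unimodular, every minor of $A$ is $0, \pm 1$, so $A$ is totally unimodular; by the Hoffman--Kruskal theorem the polyhedron $\{x \geq 0 : A x \geq b\}$ is integral and its defining system is totally dual integral for every integral $b$. Taking $b = \mathbf{1}$ yields the MFMC property for $\mathcal{C}$, and hence that $\F(\Delta)$ is normally torsion-free.

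The main obstacle is the cover ideal, because passing to the blocker $b(\mathcal{C})$ is delicate: the MFMC property is not self-dual in general (whether the packing property implies MFMC is the open Conforti--Cornu\'ejols conjecture), and there is no reason for the incidence matrix of $b(\mathcal{C})$ to be totally unimodular even when $A$ is. The route I would take is through balancedness: a totally unimodular matrix with $0/1$ entries is balanced, and the Fulkerson--Hoffman--Oppenheim integrality results for balanced matrices yield the MFMC property simultaneously for the covering and packing sides, hence for both $\mathcal{C}$ and its blocker. Applying this to $b(\mathcal{C})$ shows that $J(\Delta) = I(b(\mathcal{C}))$ is normally torsion-free as well. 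I expect the bulk of the care to go into this last step, since it is here that the extra strength of unimodularity over mere idealness (integrality of the covering polyhedron, which by Lehman's theorem is already self-dual under blocking) is actually needed.
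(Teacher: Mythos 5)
The paper offers no argument of its own here: the proposition is quoted verbatim as Corollary 4.38 of the Morey--Villarreal survey \cite{MR2932582}, so your attempt has to be judged as a self-contained proof. Its first half is correct and standard: identifying $J(\Delta)$ with the edge ideal of the blocker $b(\mathcal{C})$, invoking the equivalence between normal torsion-freeness and the MFMC (Mengerian) property of the clutter, and deducing MFMC for $\mathcal{C}$ itself from total unimodularity of the incidence matrix via Hoffman--Kruskal/TDI disposes of $\F(\Delta)$.

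The cover-ideal half, however, has a genuine gap, and it sits exactly where you predicted the bulk of the care would go. The Fulkerson--Hoffman--Oppenheim theorems for a balanced matrix $A$ say that the covering system $\{x \geq 0 : Ax \geq \mathbf{1}\}$ and the packing system $\{x \geq 0 : Ax \leq \mathbf{1}\}$ are TDI; in both, the dual variables are indexed by the rows of $A$, i.e.\ by the edges of $\mathcal{C}$, so the \emph{packing side} is a statement about integral combinations of edges (the antiblocker side), not about the blocker. MFMC for $b(\mathcal{C})$ asserts that minimal vertex covers of $\mathcal{C}$ can be packed integrally subject to arbitrary integral vertex capacities, and neither FHO statement addresses packing of transversals. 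Nor can you salvage this by ``applying this to $b(\mathcal{C})$'', because balancedness and total unimodularity are not inherited by blockers: take $\mathcal{C} = \{\{1,2\},\{3,4\},\{5,6\}\}$, whose incidence matrix is totally unimodular; its blocker consists of the eight transversals $\{a,b,c\}$ with $a \in \{1,2\}$, $b \in \{3,4\}$, $c \in \{5,6\}$, and the rows $\{1,3,5\}$, $\{1,4,6\}$, $\{2,3,6\}$ restricted to columns $1,3,6$ form a $3 \times 3$ submatrix with exactly two ones in every row and column (determinant $-2$), so $b(\mathcal{C})$ is neither balanced nor TU --- even though its edge ideal is in fact normally torsion-free. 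What this half of the proposition actually needs is a theorem about packing transversals of balanced hypergraphs, e.g.\ the Berge--Las Vergnas equitable-colouring theorem (a balanced hypergraph admits $\delta$ pairwise disjoint transversals, $\delta$ the minimum edge size) together with a vertex-replication argument to handle arbitrary integral capacities, or else the literature citation the paper itself uses. As written, your final step restates the claim rather than proving it.
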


\begin{lemma}\label{l:dualntf}
    The cover ideals of bipartite graphs are normally torsion-free.
\end{lemma}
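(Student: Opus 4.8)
The plan is to recognize the cover ideal of a bipartite graph as an instance of the ideal $J(\Delta)$ appearing in \cref{p:dualntf}, and then to combine the unimodularity of bipartite graphs with that proposition. The point is that both cited results do essentially all of the work, so the task reduces to matching definitions.

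First I would fix a bipartite graph $G = (V, E)$ and regard it as a $2$-hypergraph $H = (V, E)$, with associated simplicial complex $\Delta = \tuple{e : e \in E}$. With this identification $\F(\Delta)$ is precisely the edge ideal of $G$. The associated primes of a squarefree monomial ideal are read off from its primary decomposition, and for the edge ideal of a graph these correspond exactly to the minimal vertex covers of $G$. Consequently the ideal $J(\Delta)$, defined through the primary decomposition of $\F(\Delta)$, is generated by the monomials $\prod_{v \in C} x_v$ over the minimal vertex covers $C$ of $G$; that is, $J(\Delta)$ is the classical cover ideal of $G$.

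Next, since $G$ is bipartite it contains no odd cycles, so by \cref{t:unimodular} the $2$-hypergraph $H$ is unimodular. Applying \cref{p:dualntf} to the unimodular hypergraph $H$ and its corresponding complex $\Delta$ yields that $J(\Delta)$ is normally torsion-free, which is exactly the assertion that the cover ideal of $G$ is normally torsion-free.

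I do not expect a genuine obstacle here: the only nontrivial point is confirming that the paper's $J(\Delta)$ coincides with the cover ideal of $G$ in the usual graph-theoretic sense, and this is immediate once the associated primes of the edge ideal are identified with the minimal vertex covers. Everything else is a direct invocation of \cref{t:unimodular} and \cref{p:dualntf}.
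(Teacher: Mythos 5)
Your proposal is correct and follows exactly the paper's own argument: bipartiteness gives no odd cycles, hence unimodularity via \cref{t:unimodular}, and then \cref{p:dualntf} yields that $J(\Delta)$ is normally torsion-free. The identification of $J(\Delta)$ with the classical cover ideal that you verify carefully is, in the paper's framework, true by definition (the paper defines the cover ideal as $J(\Delta)$ via the primary decomposition of $\F(\Delta)$), so no extra work is needed there.
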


\begin{proof}
    By \cref{t:unimodular}, we know the associated hypergraphs are unimodular and every edge has the same size. The result then follows directly by \cref{p:dualntf}. 
\end{proof}

\cref{t:unimodular,p:dualntf} are the tools we use to find families of complexes failing the SLP. To exemplify this approach we first do an example.

\begin{example}\label{e:coverpath}
    Let $G$ be the graph below and 
    $$
        \F(G) = (a b, b c, c d, a e, b f, c g, d h) \subset \K[a,\dots, h]
    $$
    be its facet ideal, where $\K$ is a field of characteristic zero.

\begin{center}
\tikzset{every picture/.style={line width=0.75pt}} 

\begin{tikzpicture}[x=0.75pt,y=0.75pt,yscale=-1,xscale=1]

\draw    (137.5,90) -- (138.5,160) ;
\draw    (138.5,160) -- (202.5,160) ;
\draw    (200.5,90) -- (202.5,160) ;
\draw    (137.5,90) -- (91.5,90) ;
\draw [shift={(91.5,90)}, rotate = 180] [color={rgb, 255:red, 0; green, 0; blue, 0 }  ][fill={rgb, 255:red, 0; green, 0; blue, 0 }  ][line width=0.75]      (0, 0) circle [x radius= 3.35, y radius= 3.35]   ;
\draw [shift={(137.5,90)}, rotate = 180] [color={rgb, 255:red, 0; green, 0; blue, 0 }  ][fill={rgb, 255:red, 0; green, 0; blue, 0 }  ][line width=0.75]      (0, 0) circle [x radius= 3.35, y radius= 3.35]   ;
\draw    (200.5,90) -- (240.5,90) ;
\draw [shift={(240.5,90)}, rotate = 0] [color={rgb, 255:red, 0; green, 0; blue, 0 }  ][fill={rgb, 255:red, 0; green, 0; blue, 0 }  ][line width=0.75]      (0, 0) circle [x radius= 3.35, y radius= 3.35]   ;
\draw [shift={(200.5,90)}, rotate = 0] [color={rgb, 255:red, 0; green, 0; blue, 0 }  ][fill={rgb, 255:red, 0; green, 0; blue, 0 }  ][line width=0.75]      (0, 0) circle [x radius= 3.35, y radius= 3.35]   ;
\draw    (138.5,160) -- (90.5,160) ;
\draw [shift={(90.5,160)}, rotate = 180] [color={rgb, 255:red, 0; green, 0; blue, 0 }  ][fill={rgb, 255:red, 0; green, 0; blue, 0 }  ][line width=0.75]      (0, 0) circle [x radius= 3.35, y radius= 3.35]   ;
\draw [shift={(138.5,160)}, rotate = 180] [color={rgb, 255:red, 0; green, 0; blue, 0 }  ][fill={rgb, 255:red, 0; green, 0; blue, 0 }  ][line width=0.75]      (0, 0) circle [x radius= 3.35, y radius= 3.35]   ;
\draw    (202.5,160) -- (241.5,160) ;
\draw [shift={(241.5,160)}, rotate = 0] [color={rgb, 255:red, 0; green, 0; blue, 0 }  ][fill={rgb, 255:red, 0; green, 0; blue, 0 }  ][line width=0.75]      (0, 0) circle [x radius= 3.35, y radius= 3.35]   ;
\draw [shift={(202.5,160)}, rotate = 0] [color={rgb, 255:red, 0; green, 0; blue, 0 }  ][fill={rgb, 255:red, 0; green, 0; blue, 0 }  ][line width=0.75]      (0, 0) circle [x radius= 3.35, y radius= 3.35]   ;

\draw (132,68) node [anchor=north west][inner sep=0.75pt]   [align=left] {$\displaystyle a$};
\draw (132,168) node [anchor=north west][inner sep=0.75pt]   [align=left] {$\displaystyle b$};
\draw (197,171) node [anchor=north west][inner sep=0.75pt]   [align=left] {$\displaystyle c$};
\draw (73,85) node [anchor=north west][inner sep=0.75pt]   [align=left] {$\displaystyle e$};
\draw (194,65) node [anchor=north west][inner sep=0.75pt]   [align=left] {$\displaystyle d$};
\draw (70,150) node [anchor=north west][inner sep=0.75pt]   [align=left] {$\displaystyle f$};
\draw (250,153) node [anchor=north west][inner sep=0.75pt]   [align=left] {$\displaystyle g$};
\draw (246,80) node [anchor=north west][inner sep=0.75pt]   [align=left] {$\displaystyle h$};
\draw (170,190) node [anchor=north west][inner sep=0.75pt]   [align=left] {$\displaystyle G$};
\end{tikzpicture}
\end{center}

Since $G$ is bipartite, we know it is unimodular as a hypergraph. Moreover, the cover ideal of $G$ is given by
$$
    J(G) = (dcba, hcba, gdba, fdca, hfca, edcb, hecb, gedb).
$$
By \cref{t:unimodular,l:dualntf} $J(G)$ is normally torsion-free. In particular,
$$
    \Delta = \tuple{dcba, hcba, gdba, fdca, hfca, edcb, hecb, gedb}
$$
is a pure simplicial complex with a normally torsion-free facet ideal that has as many facets as vertices. By \cref{t:slpsymbolic} $A(\Delta)$ fails the SLP in characteristic zero.
\end{example}

The example above uses the construction of \say{whiskering} a graph. The connections of this construction and the Lefschetz properties of algebras of the form $A(\Delta)$ were first studied in \cite{cooper2023weak}. Here, we show how this construction can be used to give us simplicial complexes that fail the SLP.

\begin{definition}
    Let $G$ be a graph with vertex set $V(G) = \{v_1, \dots, v_n\}$. The \textbf{whiskered graph} $w(G)$ has vertex set $V(w(G)) = V(G) \cup \{w_1, \dots, w_{n}\}$ and edge set $E(w(G)) = \{v_i w_i \st i \in [n]\} \cup E(G)$.
\end{definition}

The graph in \cref{e:coverpath} is $w(P_4)$, where $P_4$ is the path graph on $4$ vertices. The following lemma allows us to generalize \cref{e:coverpath}.

\begin{lemma}[Theorem 5~\cite{MR3104513}]\label{l:faceswhisker}
    If $G$ is a whiskered graph on $2n$ vertices, then the Stanley-Reisner complex of $\F(w(G))$ is a pure simplicial complex of dimension $n - 1$. Moreover, facets of the Stanley-Reisner complex of $\F(w(G))$ are in bijection with independent sets of $G$. 
    
    In particular, generators of $J(w(G))$ are in bijection with independent sets of $G$.
\end{lemma}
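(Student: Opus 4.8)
The plan is to translate the statement into the language of independence complexes and then read off the facets directly from the whisker structure. Write $V(G) = \{v_1, \dots, v_n\}$, so that $w(G)$ has vertex set $\{v_1, \dots, v_n, w_1, \dots, w_n\}$, i.e. $2n$ vertices. Since $w(G)$ is a graph, $\F(w(G))$ is its edge ideal, and by the definition of the Stanley--Reisner ideal its minimal non-faces are exactly the edges of $w(G)$; hence a subset of $V(w(G))$ is a face of the Stanley--Reisner complex of $\F(w(G))$ if and only if it is an independent set of $w(G)$. Thus this complex is the independence complex $\Ind(w(G))$, and its facets are precisely the maximal independent sets of $w(G)$.

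The key step, and the one I would isolate first, is that every maximal independent set $M$ of $w(G)$ contains exactly one vertex of each whisker pair $\{v_i, w_i\}$. It cannot contain both, as $v_i w_i$ is an edge; and it must contain at least one, since otherwise $w_i$ (whose only neighbour is $v_i \notin M$) could be added to $M$ while preserving independence, contradicting maximality. Consequently $|M| = n$ for every facet, so $\Ind(w(G))$ is pure of dimension $n - 1$, giving the first assertion.

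For the bijection I would use the restriction-to-core map $M \mapsto M \cap V(G)$. Its image is independent in $G$, and conversely each independent set $\sigma \subseteq V(G)$ extends to $M_\sigma := \sigma \cup \{w_i : v_i \notin \sigma\}$, which is independent in $w(G)$ (every included whisker has its unique neighbour absent) and has size $n$, hence is a facet. The exactly-one-per-pair property shows these two maps are mutually inverse, so facets of $\Ind(w(G))$ correspond bijectively to independent sets of $G$. Finally, the generators of $J(w(G))$ are the squarefree monomials supported on the minimal vertex covers of $w(G)$ (the associated primes of the edge ideal), and minimal vertex covers are exactly the complements of maximal independent sets; taking complements therefore matches generators of $J(w(G))$ with facets of $\Ind(w(G))$, and hence with independent sets of $G$.

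I do not expect a serious obstacle here: the content is essentially combinatorial bookkeeping enabled by the special structure of whiskers. The one point needing care is the exactly-one-per-pair claim, since it is what forces purity and makes the restriction and extension maps inverse to each other; everything else follows formally once that is established.
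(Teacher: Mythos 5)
Your proposal is correct, and it proves strictly more than the paper does on its own. For the first two assertions (purity of dimension $n-1$ and the bijection between facets and independent sets of $G$), the paper gives no argument at all: it simply cites Theorem 5 of \cite{MR3104513}. The only thing the paper actually proves is the final claim, namely that $J(w(G))$ is generated by the monomials $\prod_{i \notin F} x_i$ as $F$ ranges over the facets of the Stanley--Reisner complex of $\F(w(G))$ --- which is exactly the complement correspondence (facets $\leftrightarrow$ minimal vertex covers $\leftrightarrow$ cover-ideal generators) that you use for your last step. Your replacement for the citation is the standard direct argument: identify the Stanley--Reisner complex of $\F(w(G))$ with the independence complex of $w(G)$, show that every maximal independent set meets each whisker pair $\{v_i, w_i\}$ in exactly one vertex (so all facets have size $n$), and obtain the bijection from the mutually inverse restriction and extension maps. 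This is all correct, including the small point that an independent set of size $n$ must itself be maximal because every maximal independent set has size $n$. The trade-off is the expected one: the paper's citation is shorter and anchors the lemma in the literature it comes from, while your version is self-contained and makes transparent the mechanism by which whiskering forces purity. One wording note: the lemma's hypothesis ``$G$ is a whiskered graph on $2n$ vertices'' is sloppily phrased; your reading ($G$ has $n$ vertices, so $w(G)$ has $2n$) is the one the paper intends and uses later, e.g.\ in the corollary on whiskered bipartite graphs.
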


\begin{proof}
    The first part of the statement is exactly Theorem 5 from \cite{MR3104513}. The last part of the statement follows since $J(w(G))$ is generated by the following set 
    $$ 
        \Big{\{}\prod_{i \not\in F} x_i \st F \text{ is a facet of the Stanley-Reisner complex of $\F(w(G))$}\Big{\}}.
    $$
\end{proof}

A direct consequence of \cref{l:faceswhisker} is the fact that $J(w(G))$ is generated by monomials of degree $n$ for any graph $G$ on $n$ vertices. 

\begin{lemma}\label{l:maxedges}
    A bipartite graph on $n$ vertices has at most $\frac{n^2}{4}$ edges.
\end{lemma}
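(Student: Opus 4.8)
The plan is to exploit the defining structure of a bipartite graph directly: its vertex set splits into two independent parts, so every edge is forced to cross between them, and then to optimize the crossing count over all possible splits. This reduces the combinatorial bound to a one-variable optimization that is handled by the arithmetic–geometric mean inequality.

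Concretely, I would first fix a bipartition of the vertex set into $V_1 \sqcup V_2$ witnessing bipartiteness, and write $a = |V_1|$ and $b = |V_2|$, so that $a + b = n$. Since no edge can join two vertices of the same part, each edge has exactly one endpoint in $V_1$ and one in $V_2$. Hence the edge set injects into the set of pairs $V_1 \times V_2$, giving the bound
$$
    |E(G)| \leq ab.
$$
Equality is attained precisely by the complete bipartite graph $K_{a,b}$, but we only need the inequality.

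The second and final step is to bound $ab$ in terms of $n$ alone. By the AM–GM inequality applied to the nonnegative integers $a$ and $b$,
$$
    ab \leq \left(\frac{a + b}{2}\right)^2 = \frac{n^2}{4},
$$
which combined with the previous display yields $|E(G)| \leq \frac{n^2}{4}$, as desired.

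I do not anticipate any genuine obstacle here, as the argument is entirely elementary; the only point requiring a moment of care is making sure the bipartition is chosen up front (a bipartite graph admits such a partition by definition) so that the \emph{no-same-part-edge} property is available, and noting that the bound $ab \le n^2/4$ holds for all real $a,b$ with $a+b=n$ regardless of whether $n$ is even, so integrality of $a$ and $b$ plays no role in the estimate.
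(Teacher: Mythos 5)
Your proof is correct and complete: the paper states this lemma without proof, treating it as a classical fact, and your argument (fixing a bipartition $V_1 \sqcup V_2$ with $|V_1| = a$, $|V_2| = b$, bounding $|E(G)| \leq ab$, then applying AM--GM to get $ab \leq \left(\frac{a+b}{2}\right)^2 = \frac{n^2}{4}$) is exactly the standard argument the paper implicitly relies on. Your closing remark is also well taken: the bound holds for all real $a, b$ with $a + b = n$, so no parity or integrality considerations are needed.
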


The previous results in this section then imply the following.
 
\begin{proposition}\label{p:unimodularslpbad}
    If $\Delta$ is a pure simplicial complex on vertex set $[n]$ that is unimodular as a hypergraph, $\F(\Delta) \subset S$ is unmixed and $J(\Delta)$ is generated by at least $n$ elements. Then $A(\Gamma)$ fails the SLP in characteristic zero, where $\Gamma$ is the facet complex of $J(\Delta)$.
\end{proposition}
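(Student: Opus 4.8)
The plan is to reduce the statement to a direct application of \cref{t:slpsymbolic} to the complex $\Gamma$. By construction $\Gamma$ is the facet complex of $J(\Delta)$, so $\F(\Gamma) = J(\Delta)$, and it therefore suffices to verify the three hypotheses of \cref{t:slpsymbolic} for $\Gamma$: that $\Gamma$ is pure, that $\F(\Gamma) = J(\Delta)$ is normally torsion-free, and that $\Gamma$ has at least as many facets as vertices. Once all three are checked, \cref{t:slpsymbolic} immediately gives that $A(\Gamma)$ fails the SLP in characteristic zero.

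First I would establish normal torsion-freeness. Since $\Delta$ is pure, its associated hypergraph has all edges of the same size, so it is an $r$-hypergraph, and by hypothesis it is unimodular. Thus \cref{p:dualntf} applies verbatim and yields that $J(\Delta) = \F(\Gamma)$ is normally torsion-free, which is exactly the torsion-freeness hypothesis needed for \cref{t:slpsymbolic}.

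Next I would derive the purity of $\Gamma$ from the unmixedness hypothesis. Writing the primary decomposition $\F(\Delta) = \bigcap_i (x_{i,1}, \dots, x_{i,r_i})$, the generators of $J(\Delta)$ are the monomials $x_{i,1}\cdots x_{i,r_i}$, of degree $r_i$. Because $\F(\Delta)$ is unmixed all of its minimal primes have the same height, so all the $r_i$ coincide; hence $J(\Delta)$ is generated by distinct monomials of one fixed degree. Distinct monomials of equal degree form a minimal generating set, and they index the facets of $\Gamma$, so $\Gamma$ is pure and its facets biject with the minimal generators of $J(\Delta)$. For the facet–vertex count I would then note that the number of facets of $\Gamma$ equals the number of these generators, which is at least $n$ by hypothesis, while the vertices of $\Gamma$ are among $x_1,\dots,x_n$ and so number at most $n$; thus $\Gamma$ has at least as many facets as vertices.

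Since each step is a verification against a result already available in the excerpt, I do not expect a genuine obstacle. The only points requiring care are the two bookkeeping observations in the previous paragraph: that unmixedness of $\F(\Delta)$ forces $J(\Delta)$ to be generated in a single degree (so that $\Gamma$ is pure and the minimal generators are in bijection with its facets), and that the hypothesis on the number of generators of $J(\Delta)$ is compared against the at-most-$n$ vertices of $\Gamma$ in the correct direction. With these in place, \cref{t:slpsymbolic} concludes the proof.
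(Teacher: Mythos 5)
Your proposal is correct and follows essentially the same route as the paper's proof: apply \cref{p:dualntf} to get that $J(\Delta)=\F(\Gamma)$ is normally torsion-free, use unmixedness of $\F(\Delta)$ to get purity of $\Gamma$ and the generator count to get at least as many facets as vertices, then conclude by \cref{t:slpsymbolic}. Your write-up simply makes explicit the bookkeeping (equal degrees of generators, vertices of $\Gamma$ being at most $n$) that the paper leaves implicit.
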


\begin{proof}
    By \cref{p:dualntf}, we know both $\F(\Delta)$ and $J(\Delta)$ are normally-torsion free. Since $\F(\Delta)$ is unmixed, the facet complex $\Gamma$ of $J(\Delta)$ is pure, and has at least as many facets as vertices. The result then follows by \cref{t:slpsymbolic}.
\end{proof}

\begin{corollary}[\textbf{Facet complexes of cover ideals of whiskered bipartite graphs fail the SLP}]\label{c:bipartitewhisker} 
    Let $G$ be a bipartite graph on $n \geq 5$ vertices, let 
    $$
        J(w(G)) = (x_{i_{1,1}}\dots x_{i_{1,r_1}}, \dots, x_{i_{s,1}} \dots x_{i_{s,r_s}}) \subset \K[x_1, \dots, x_{2n}] 
    $$
    and
    $$
    \Delta = \tuple{\{i_{1,1}, \dots, i_{1, r_1}\}, \dots, \{i_{s,1}, \dots, i_{s,r_s}\}}.
    $$
    Then $A(\Delta)$ is a level monomial algebra that fails the SLP.    
\end{corollary}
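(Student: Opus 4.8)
The plan is to recognize the facet complex $\Delta$ of $J(w(G))$ as the complex $\Gamma$ produced by \cref{p:unimodularslpbad}, applied to the whiskered graph $w(G)$ regarded as a $1$-dimensional simplicial complex $W$ whose facets are the edges of $w(G)$. Concretely I would set $W = w(G)$, check that $W$ satisfies the three hypotheses of \cref{p:unimodularslpbad} (purity and unimodularity of $W$, unmixedness of $\F(W)$, and that $J(W) = J(w(G))$ has at least as many generators as $W$ has vertices), and then read off both conclusions: $A(\Delta)$ fails the SLP directly from \cref{p:unimodularslpbad}, while $A(\Delta)$ is level from \cref{p:purelevel} once I observe that $\Delta$ is pure.

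First I would dispose of the structural hypotheses. Since $G$ is bipartite, I fix a $2$-coloring of $V(G)$ and color each whisker vertex $w_i$ with the color opposite to $v_i$; every edge of $w(G)$ — both the edges inherited from $G$ and the whisker edges $v_iw_i$ — then joins vertices of different colors, so $w(G)$ is bipartite and hence unimodular as a $2$-uniform hypergraph by \cref{t:unimodular}. As a simplicial complex $W$ is pure of dimension $1$, since every vertex of $w(G)$ lies on an edge (each original vertex on its whisker, each whisker vertex on its unique whisker edge), leaving no isolated $0$-dimensional facets. Unmixedness of $\F(W)$ is equivalent to purity of its Stanley--Reisner complex, which is exactly the content of \cref{l:faceswhisker}.

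The heart of the argument, and the place where the bound $n \geq 5$ enters, is the generator count. By \cref{l:faceswhisker} the generators of $J(w(G))$ are in bijection with the independent sets of $G$, and $W$ has $2n$ vertices, so I must show that $G$ has at least $2n$ independent sets. I would bound this below by counting only the independent sets of size at most $2$: the empty set, the $n$ singletons, and the non-adjacent pairs, of which there are $\binom{n}{2} - |E(G)|$. Using \cref{l:maxedges} together with the integrality of $|E(G)|$ gives $|E(G)| \leq \lfloor n^2/4 \rfloor$, whence the number of independent sets is at least
$$
    1 + n + \binom{n}{2} - \left\lfloor \tfrac{n^2}{4} \right\rfloor.
$$
A short computation, splitting on the parity of $n$, shows this quantity is $\geq 2n$ precisely when $n \geq 5$ (the odd case $n = 2m+1$ contributes $m^2$ pairs and needs $m \geq 2$; the even case $n = 2m$ contributes $m(m-1)$ pairs and needs $m \geq 3$), with equality at $n = 5$. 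This verifies that $J(w(G))$ has at least $2n$ generators, so all hypotheses of \cref{p:unimodularslpbad} hold and $A(\Delta)$ fails the SLP in characteristic zero.

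Finally, for levelness I would note that by \cref{l:faceswhisker} every generator of $J(w(G))$ has degree $n$, so every facet of $\Delta$ has dimension $n - 1$; thus $\Delta$ is pure and \cref{p:purelevel} gives that $A(\Delta)$ is level. The only genuinely delicate point is the independent-set count, where the extremal edge bound of \cref{l:maxedges} combined with integrality is exactly what makes $n \geq 5$ the sharp threshold.
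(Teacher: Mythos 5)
Your proposal is correct and follows essentially the same route as the paper: both apply \cref{p:unimodularslpbad} to $w(G)$, use \cref{l:faceswhisker} to identify generators of $J(w(G))$ with independent sets of $G$, and obtain the $2n$ lower bound by counting the empty set, the $n$ singletons, and the non-adjacent pairs via \cref{l:maxedges} plus integrality. Your write-up is somewhat more careful on the side hypotheses (bipartiteness of $w(G)$, unmixedness via purity of the independence complex, and the explicit levelness argument through \cref{p:purelevel}), and your parity-split count is a slightly sharper version of the paper's bound of $n-1$ non-edges, but these are refinements of the same argument rather than a different one.
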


\begin{proof}
    We want to show that for $n \geq 5$, the Stanley-Reisner complex of $\F(G)$ has at least $2n$ faces. By \cref{l:maxedges}, the number of edges in the Stanley-Reisner complex of $\F(G)$ is at least 
    $$
        \binom{n}{2} - \frac{n^2}{4} = \frac{n^2 - 2n}{4}
    $$
    and since the number of edges must be an integer, we conclude the Stanley-Reisner complex of $\F(G)$ has at least $n - 1$ edges for $n \geq 5$. By \cref{l:faceswhisker}, the generators of $J(w(G))$ are in bijection with independent sets of $G$ (including the empty set), thus we have the following
    \begin{enumerate}
        \item The empty set is an independent set of $G$
        \item Every vertex is an independent set of $G$
        \item Every edge of the Stanley-Reisner complex is an independent set of $G$.
    \end{enumerate}
    Counting the sets above we see that we have at least $1 + n + n - 1 = 2n$ independent sets, and so $J(w(G))$ is generated by at least $2n$ monomials. 
    Finally, since $w(G)$ is a bipartite graph, we know $\F(w(G))$ is an unmixed ideal and $J(w(G)) = \F(\Delta)$ is a normally torsion-free ideal where $\Delta$ is a pure simplicial complex with at least as many facets as vertices. The result then follows by \cref{p:unimodularslpbad}.
\end{proof}

\section{Symbolic defect polynomials}\label{s:symbolicdefect}

In \cref{s:symbolic}, we studied ideals whose symbolic powers are exactly the ordinary powers, but as was mentioned before, this is not always the case.
In order to measure how different the symbolic powers of an ideal $I$ are from its ordinary powers, the authors in \cite{MR3906569} introduced the following numerical invariants of an ideal $I$:

\begin{definition}
    Let $I$ be an ideal. The \textbf{$m$-th symbolic defect} of $I$, denoted by $\sdefect(I, m)$ is
    $$
        \sdefect(I, m) = \mu(I^{(m)}/I^m)
    $$
    where $\mu(M)$ denotes the minimal number of generators of $M$.
\end{definition}
 
Most of the results on the symbolic powers of squarefree monomial ideals, take the approach of associating hypergraphs and graphs to squarefree monomial ideals. Here, we focus on the combinatorial properties of simplicial complexes that are connected to symbolic powers. 
We now briefly describe how the results from previous sections lead us to \say{symbolic defect polynomials} (see \cref{d:symbolicpol}). \cref{hausel} shows that in characteristic zero, maps of the form $\times L^i \colon A(\Delta)_1 \to A(\Delta)_{i + 1}$ are injective for $i \leq d - 1$ when $\Delta$ is a pure $d$-dimensional simplicial complex. \cref{lefschetzmm} then says the analytic spread of $\F(\Delta^{(i)}) \subset S$ is always equal to the dimension of $S$. 
Finally, \cref{p:ntfanalyticspread} implies facet ideals of skeletons of pure complexes are not normally torsion-free. We have shown the following statement.

\begin{proposition}\label{p:symbolicmotivation}
    For every $1 \leq i \leq d - 1$ there exists $j$ such that $F(\Delta^{(i)})^{(j)} \neq \F(\Delta^{(i)})^j$.     
\end{proposition}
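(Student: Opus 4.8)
The plan is to read the statement off as a direct synthesis of the three ingredients highlighted in the preceding paragraph, so the proof is really a matter of checking that their hypotheses line up. First I would record that for a pure $d$-dimensional complex $\Delta$ and $i < d$, the skeleton $\Delta^{(i)}$ is itself pure of dimension $i$: every face of $\Delta$ of dimension at most $i$ sits inside some $d$-face, hence inside an $i$-face, so the facets of $\Delta^{(i)}$ are exactly the $i$-faces of $\Delta$. Consequently $\F(\Delta^{(i)})$ is a squarefree monomial ideal, generated in the single degree $i+1$ by the monomials $x_F$ with $F$ an $i$-face.

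Next I would invoke \cref{c:maxanalyticspread}, which is the substantive input and already establishes $\ell(\F(\Delta^{(i)})) = n = \dim S$ for every $1 \leq i < d$. I would stress that this is precisely where Hausel's injectivity theorem (\cref{hausel}) does the work: purity makes $A(\Delta)$ level by \cref{p:purelevel}, so the maps $\times L^{i-1}\colon A(\Delta)_1 \to A(\Delta)_i$ are injective in characteristic zero, and through \cref{lefschetzmm} and \cref{rankanalyticspread} this injectivity transfers to the statement that $\log(\F(\Delta^{(i)}))$ has full rank $n$, i.e.\ that the analytic spread is maximal.

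Finally I would feed maximal analytic spread into \cref{p:ntfanalyticspread}: a squarefree monomial ideal $I$ with $\ell(I) = \dim S$ is never normally torsion-free. Applying this with $I = \F(\Delta^{(i)})$ shows that $\F(\Delta^{(i)})$ fails to be normally torsion-free, and unwinding the definition of normally torsion-free (that $I^{(m)} = I^m$ holds for all $m$) yields the existence of some $j$ with $\F(\Delta^{(i)})^{(j)} \neq \F(\Delta^{(i)})^j$, which is exactly the claim. I do not expect a genuine obstacle beyond bookkeeping here: all the analytic depth lives in \cref{c:maxanalyticspread} (and hence in \cref{hausel}), and the only points demanding care are confirming that the skeleton is pure so that \cref{c:maxanalyticspread} applies verbatim, and that $\F(\Delta^{(i)})$ is squarefree so that \cref{p:ntfanalyticspread} applies.
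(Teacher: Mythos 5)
Your proposal is correct and follows exactly the paper's own argument: Hausel's injectivity theorem (via \cref{c:maxanalyticspread}) gives $\ell(\F(\Delta^{(i)})) = n$, and \cref{p:ntfanalyticspread} then rules out normal torsion-freeness, which is precisely the claimed statement. The extra bookkeeping you flag (purity of the skeleton, squarefreeness and equigeneration of its facet ideal) is the same implicit verification the paper relies on.
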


This leads us to the study of sequences of the form
\begin{equation}\label{eq:1}
    \sdefect(\F(\Delta^{(1)}), j), \dots, \sdefect(\F(\Delta^{(d-1)}), j),   
\end{equation}
    where $\Delta$ is a pure $d$-dimensional complex. 
    In the original paper \cite{MR3906569}, the authors suggested the study of sequences $\sdefect(I, j)$ where $I$ is fixed and $j$ varies. Results from this perspective on the symbolic defect of (squarefree) monomial ideals can be found for example in~\cites{MR3986423,MR4556315,symbolicdefectedgeideals}. 
    As we will see, the sequence in \eqref{eq:1} contains information on the $f$-vector of $\Delta$. We begin by stating the following result from~\cite{MR3906569} on the symbolic defect of \textit{star configurations} in our setting.

\begin{theorem}\cite{MR3906569}\label{t:sdefectsimplex}
    Let $I_{n,l} \subset S = \K[x_1, \dots, x_n]$ be the ideal generated by every squarefree monomial of degree $l$ in $S$. In other words,
    $$
        I_{n,l} = \bigcap_{1 \leq i_1 < \dots < i_{n - l + 1} \leq n} (x_{i_1}, \dots, x_{i_{n - l + 1}}).
    $$
    Then $\sdefect(I_{n,l}, 2) = \binom{n}{l + 1}$.
\end{theorem}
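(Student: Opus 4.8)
The plan is to work entirely at the level of monomials, exploiting that $I := I_{n,l}$, its ordinary square $I^2$, and its symbolic square $I^{(2)}$ are all monomial ideals, and to produce an explicit minimal generating set of the module $I^{(2)}/I^2$. I will assume $2 \le l \le n$, which is the relevant range (for $l=1$ one has $I=\m$ and the formula does not apply, while the boundary is recovered since $\binom{n}{l+1}$ vanishes when $l=n$). The first step is a purely combinatorial description of the three ideals. Writing monomials as $x^{\mathbf a} = \prod_i x_i^{a_i}$ and setting $P_A = (x_i : i \in A)$, the ideal $I$ is minimally generated by the $\binom{n}{l}$ squarefree monomials of degree $l$, so $I^2$ is generated in degree $2l$; and since $x^{\mathbf a} \in P_A^2$ exactly when $\sum_{i\in A} a_i \ge 2$, the description $I^{(2)} = \bigcap_{|A| = n-l+1} P_A^2$ shows that $x^{\mathbf a} \in I^{(2)}$ if and only if the sum of the $n-l+1$ smallest entries of $\mathbf a$ is at least $2$.

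Next I would establish the key structural identity $I^{(2)} = I^2 + (x_\sigma : |\sigma| = l+1)$. The inclusion $\supseteq$ is immediate: $I^2 \subseteq I^{(2)}$ always holds, and a squarefree $x_\sigma$ with $|\sigma| = l+1$ has $n-l-1$ zero entries, so the sum of its $n-l+1$ smallest entries equals exactly $2$, placing it in $I^{(2)}$ by the criterion above. For $\subseteq$ I would take a monomial $x^{\mathbf a} \in I^{(2)}$ and split on $s := |\supp(x^{\mathbf a})|$: if $s \ge l+1$ then $x^{\mathbf a}$ is divisible by $x_\sigma$ for any $(l+1)$-subset $\sigma$ of its support; if $s \le l$, the criterion forces $s = l$ with every nonzero entry $\ge 2$, so $x^{\mathbf a}$ is divisible by $x_T^2 \in I^2$ with $T = \supp(x^{\mathbf a})$. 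This shows the squarefree degree-$(l+1)$ monomials generate $I^{(2)}/I^2$, giving the upper bound $\mu(I^{(2)}/I^2) \le \binom{n}{l+1}$.

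The final step, and the part I expect to require the most care, is to show these generators are minimal, so that equality holds. Since $I^{(2)}/I^2$ is a $\Z^n$-graded quotient of monomial ideals, its minimal generators are exactly the monomials $m \in I^{(2)} \setminus I^2$ for which $m/x_i \notin I^{(2)}\setminus I^2$ whenever $x_i \mid m$. I would verify first that each squarefree $x_\sigma$, $|\sigma| = l+1$, qualifies: it lies in $I^{(2)}\setminus I^2$ because its degree $l+1 < 2l$ keeps it out of $I^2$, and deleting any variable produces a squarefree monomial with support $l$, which fails the criterion and hence leaves $I^{(2)}$. I would then rule out every other candidate. If $m \in I^{(2)}\setminus I^2$ has some exponent $a_j \ge 2$, then $m/x_j$ has the same support (so remains in $I^{(2)}$) and cannot lie in $I^2$ --- here is the delicate point, where I rely on $I^2$ being an \emph{ideal}, so that $m/x_j \in I^2$ would force $m \in I^2$ --- hence $m$ is not minimal. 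If instead $m$ is squarefree, then $m \notin I^2$ forces $|\supp(m)| \le 2l-1$, because a squarefree monomial lies in $I^2$ precisely when its support contains two disjoint $l$-subsets, i.e. has size $\ge 2l$; so when $|\supp(m)| \ge l+2$, deleting any variable keeps the support in the range $[l+1, 2l-2]$ and therefore in $I^{(2)}\setminus I^2$, again defeating minimality. The only monomials that survive are the squarefree ones of degree exactly $l+1$, which yields $\mu(I^{(2)}/I^2) = \binom{n}{l+1}$.
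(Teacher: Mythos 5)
Your proof is correct, but it cannot be compared line-by-line with the paper's own argument, because the paper does not prove this theorem at all: it imports it from \cite{MR3906569}, where it arises as a special case of the computation of the second symbolic defect of star configurations. The comparison is therefore with that cited argument. Your key identity $I^{(2)} = I^2 + (x_\sigma : |\sigma| = l+1)$ is exactly the monomial specialization of the structural result used in \cite{MR3906569} (there, $I^{(2)}$ equals $I^2$ plus the ideal generated by the products of $l+1$ of the defining forms, proved for star configurations of forms meeting properly), and the identification of those extra generators as minimal generators modulo $I^2$ follows the same outline. What your route buys is elementarity and self-containedness: everything reduces to the exponent-vector criterion that a monomial lies in $I^{(2)}$ if and only if the sum of its $n-l+1$ smallest exponents is at least $2$, together with the standard fact that minimal generators of a $\Z^n$-graded quotient of monomial ideals can be detected monomial-by-monomial. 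What the cited route buys is generality beyond the monomial case. You were also right to make the hypothesis $l \ge 2$ explicit; the paper's statement leaves it implicit, and for $l=1$ the formula genuinely fails since $I = \m$ is prime.

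One inference deserves repair, although the claim it supports is true. In the non-squarefree case you write that $m/x_j$ ``has the same support (so remains in $I^{(2)}$)''; membership in $I^{(2)}$ is not determined by support alone (for $n=3$, $l=2$ one has $x_1^2x_2^2 \in I^{(2)}$ but $x_1x_2 \notin I^{(2)}$), so as stated this is not a valid deduction. Your own second step supplies the missing justification: a monomial of $I^{(2)}$ whose support has size at most $l$ lies in $I^2$, so any $m \in I^{(2)}\setminus I^2$ satisfies $|\supp(m)| \ge l+1$; since $a_j \ge 2$, the monomial $m/x_j$ has that same support of size $\ge l+1$, hence is divisible by some squarefree $x_\sigma$ with $|\sigma| = l+1$ and therefore lies in $I^{(2)}$. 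With that sentence inserted the argument is complete. Note also that the final elimination of all remaining candidates is not logically needed: once the $x_\sigma$ generate $I^{(2)}/I^2$ and each survives modulo $\m I^{(2)} + I^2$, their distinct multidegrees and graded Nakayama already force $\mu\bigl(I^{(2)}/I^2\bigr) = \binom{n}{l+1}$.
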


Note that the ideals $I_{n, l}$ defined above can be seen as $\F(\Delta(l-1))$, where $\Delta$ is the $n-1$ dimensional simplex. 
Since $\binom{n}{l + 1}$ is the number of $l$-faces of the $n-1$ dimensional simplex, \cref{t:sdefectsimplex} says the second symbolic defect of the facet ideals of skeletons of the simplex can be understood from the $f$-vector of the simplex. 
This observation, together with \cref{p:symbolicmotivation} motivates the definition of \textit{symbolic defect polynomials}.

\begin{definition}\label{d:symbolicpol}
    Let $\Delta$ be a $d$-dimensional simplicial complex. The \textbf{$m$-th symbolic defect polynomial} of $\Delta$ is
    $$
        \mu_m(\Delta, t) = t^2\sum_{i = 1}^{d - 1} \sdefect(\F(\Delta^{[i]}), m)t^i 
    $$
    where $\Delta^{[i]}$ is the simplicial complex whose facets are the $i$-faces of $\Delta$. Note that when $\Delta$ is pure, $\Delta^{[i]}$ is equal to the $i$-th skeleton $\Delta^{(i)}$ of $\Delta$.
\end{definition}

\cref{t:sdefectsimplex} implies the second symbolic polynomial of an $n-1$-dimensional simplex $\Delta$ is equal to 
$$
    \mu_2(\Delta, t) = (1 + t)^n - 1 - nt- \binom{n}{2}t^2 = t^2\sum_{i = 1}^{n - 1} \binom{n}{i + 2}t^i.
$$
\begin{corollary}
    Let $G$ be a star graph on $n + 1$ vertices, and $\Delta$ the Stanley-Reisner complex of $\F(G)$. Then
    $$
        \mu_2(\Delta, t) = t^2\sum_{i = 1}^{n - 2} \binom{n}{i + 2}t^i = t^2 \sum_{i  = 1}^{n - 2} f_{i + 1} t^i.
    $$
\end{corollary}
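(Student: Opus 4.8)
The plan is to unwind the definitions and recognize the ideals $\F(\Delta^{[i]})$ as the star configuration ideals $I_{n,l}$ from \cref{t:sdefectsimplex}, so that the already-established computation of their second symbolic defect applies verbatim.

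First I would describe $\Delta$ explicitly. Write $G$ as the star with center $x_0$ and leaves $x_1, \dots, x_n$, so that $\F(G) = (x_0x_1, \dots, x_0x_n)$. The minimal nonfaces of the Stanley--Reisner complex $\Delta$ are precisely the edges $\{0,i\}$, hence a subset $\sigma \subseteq \{0,1,\dots,n\}$ is a face of $\Delta$ if and only if it does not contain $0$ together with any $i \geq 1$. Consequently the facets of $\Delta$ are the full simplex on $\{1,\dots,n\}$ together with the isolated vertex $\{0\}$; in particular $\dim \Delta = n-1$ and $\Delta$ is \emph{not} pure.

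Next I would identify the relevant truncations. For every $i \geq 1$ the $i$-faces of $\Delta$ are exactly the $(i+1)$-subsets of $\{1,\dots,n\}$, since the isolated vertex $\{0\}$ contributes only in dimension $0$. Thus $\Delta^{[i]}$ is the $i$-skeleton of the simplex on $\{1,\dots,n\}$, and its facet ideal is generated by all squarefree monomials of degree $i+1$ in $\K[x_1,\dots,x_n]$; that is, $\F(\Delta^{[i]}) = I_{n,\,i+1}$ in the notation of \cref{t:sdefectsimplex}. Applying that theorem with $l = i+1$ gives $\sdefect(\F(\Delta^{[i]}), 2) = \binom{n}{i+2}$. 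Substituting into \cref{d:symbolicpol} with $d = n-1$, so that the sum runs over $1 \leq i \leq n-2$, yields the first claimed equality $\mu_2(\Delta, t) = t^2 \sum_{i=1}^{n-2} \binom{n}{i+2} t^i$. For the second equality I would record the $f$-vector: for $i \geq 1$ the $i$-faces of $\Delta$ are the $(i+1)$-subsets of $\{1,\dots,n\}$, so $f_{i+1} = \binom{n}{i+2}$ for every $i$ in the summation range, which gives the stated rewriting in terms of the $f$-vector.

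No serious obstacle is expected; the content is entirely in correctly reading off the faces of the non-pure complex $\Delta$ and matching up indices. The one point demanding genuine care is the index bookkeeping, since degree-$(i+1)$ generators, the binomial coefficient $\binom{n}{i+2}$, and the face count $f_{i+1}$ all occur simultaneously, together with the reminder that because $\Delta$ is not pure one must work with $\Delta^{[i]}$ rather than the skeleton $\Delta^{(i)}$ in \cref{d:symbolicpol}, even though for $i \geq 1$ the two coincide here.
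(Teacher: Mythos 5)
Your proof is correct and follows the same route as the paper: identify $\Delta$ as the full simplex on the $n$ leaves together with the isolated center vertex, observe that the isolated vertex contributes nothing to any $\Delta^{[i]}$ with $i \geq 1$, and apply \cref{t:sdefectsimplex} with $l = i+1$. The paper's proof is just a terser statement of exactly this argument, and your index bookkeeping ($d = n-1$, summation range $1 \leq i \leq n-2$, $f_{i+1} = \binom{n}{i+2}$) matches.
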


\begin{proof}
    The Stanley-Reisner complex of a star graph is a simplex with an isolated vertex. Since the isolated vertex does not change the symbolic defect polynomial, the result holds directly by \cref{t:sdefectsimplex}.
\end{proof}

\begin{example}\label{e:symbolicdefectpolynomial}
    Let $G = w(P_4)$ be the whiskered path graph from \cref{e:coverpath} and 
    $$
        \Delta = \tuple{efgh, afgh, begh, cefh, acfh, defg, adfg, bdeg}
    $$
    the Stanley-Reisner complex of $\F(w(P_4))$. Then the second and third symbolic polynomials of $\Delta$ are
    $$
        \mu_2(\Delta, t) = 22t^3 + 8t^4 \qand \mu_3(\Delta, t) = 184t^3 + 106t^4.
    $$
    Moreover, the $f$-vector of $\Delta$ is $(1,8,21,22,8)$. Note that the coefficients of $\mu_2(\Delta, t)$ are entries of the $f$-vector of $\Delta$.
\end{example}

The facet ideals of the skeletons of a simplicial complex $\Delta$ are invariant under deleting higher dimensional faces. In other words, if $\Delta$ contains the boundary of an $n$-dimensional simplex $\tau$, but not $\tau$ itself, then $\F(\Delta(n - 1)) = \F(\Delta'(n - 1))$, where the faces of $\Delta'$ are the faces of $\Delta$, and $\tau$. For this reason, we focus on simplicial complexes where the boundary of a simplex $\tau$ is a subcomplex of $\Delta$ if and only if $\tau$ is a face of $\Delta$. These complexes are called \textbf{flag complexes}, and they can be characterized as the Stanley-Reisner complexes of squarefree monomial ideals generated in degree $2$.

When $G$ is a graph, then $\sdefect(\F(G), 2)$ has a simple description in terms of the combinatorial structure of $G$. Moreover, this description gives us even more evidence on the connections between symbolic defect polynomials and $f$-vectors. Before stating the result, we set some notation from Graph theory.

\begin{definition}
    Let $G$ be a graph with vertex set $V = [n]$ and edge set $E(G)$. 
    Given $v \in V$, we denote by $N[v]$ the \textbf{closed neighborhood} of $v$, that is, $N[v] = \{u : u \in V, \{u, v\} \in E(G)\} \cup \{v\}$. Similarly, the closed neighborhood of a set $A \subset V$ is $N[A] = \cup_{v \in A} N[v]$. We write $G[A]$ for the graph with vertex set $A$ and edge set $E' = \{\{u, v\} : \{u, v\} \in E(G), u,v \in A\}$, which is called the \textbf{induced subgraph by $A$}. 
    A set of vertices $A \subset V(G)$ is said to be an \textbf{independent set} if there is no edge between elements of $A$.
    Given a monomial $m$, we denote by $\supp(m) = \{i \st x_i \mid m\}$ the \textbf{support} of $m$. Moreover, we write $G_m$ for the induced subgraph $G[\supp(m)]$.

    We denote by $K_n$ the complete graph on $n$ vertices.
\end{definition}

\begin{theorem}[\cite{kumarsymbolicdefect,symbolicdefectedgeideals}]\label{t:triangle}
    Let $G$ be a graph. Then $\F(G)^{(2)} = \F(G)^2 + (x_i x_j x_k \st G[\{i,j,k\}] = K_3)$, in particular, $\sdefect(\F(G), 2)$ is the number of triangles of $G$.
\end{theorem}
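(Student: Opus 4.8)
The plan is to prove the asserted ideal equality as a statement about monomials and then read off the symbolic defect. Since $\F(G)$ is a radical (squarefree) monomial ideal whose associated primes are generated by the minimal vertex covers of $G$, the description of symbolic powers recalled above gives $\F(G)^{(2)} = \bigcap_{C} P_C^2$, where $C$ ranges over the minimal vertex covers and $P_C = (x_i \st i \in C)$. As each $P_C^2$ is a monomial ideal, a monomial $x^a$ lies in $\F(G)^{(2)}$ if and only if $\sum_{i \in C} a_i \geq 2$ for every minimal vertex cover $C$; since any vertex cover contains a minimal one, this is equivalent to the same inequality for \emph{every} vertex cover. The inclusion $\F(G)^2 \subseteq \F(G)^{(2)}$ is automatic, and for a triangle monomial $x_i x_j x_k$ with $G[\{i,j,k\}] = K_3$, every vertex cover must contain at least two of $i,j,k$ (otherwise one edge of the triangle is uncovered), so the criterion is met. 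This settles the inclusion $\supseteq$.

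The substance is the reverse inclusion: every monomial $x^a \in \F(G)^{(2)}$ lies in $\F(G)^2 + T$, where $T = (x_ix_jx_k \st G[\{i,j,k\}] = K_3)$. First I would note that $x^a \in T$ exactly when $G[\supp(x^a)]$ contains a triangle, so I may assume the support graph $H := G[\supp(x^a)]$ is triangle-free and aim to show $x^a \in \F(G)^2$. Restricting to the support is harmless: vertices outside $\supp(x^a)$ carry weight $0$ and may be added to any cover for free, while edges meeting them cannot be used, so both the cover condition and the power condition take place inside $H$ with the positive weights $a_i$. In this language, membership in $\F(G)^{(2)}$ says the minimum weight of a vertex cover of $H$ is at least $2$, while $x^a \in \F(G)^2$ says one can pick two edges $e_1, e_2$ of $H$ (with repetition allowed) whose indicator vectors satisfy $\mathbf 1_{e_1} + \mathbf 1_{e_2} \leq a$. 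Thus the theorem is precisely the assertion that the gap between vertex cover and edge packing at level $2$ is witnessed only by triangles.

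The key lemma I would isolate is: \emph{if $H$ is triangle-free and every vertex cover of $H$ has weight at least $2$, then two edges can be packed under the capacities $a$.} I would argue by contrapositive. If no two edges can be packed, then in particular $H$ has no two vertex-disjoint edges, since a matching of size two would be packable (all support weights are $\geq 1$). By the classical fact that a graph whose edges pairwise intersect is either a triangle or a star, and since $H$ is triangle-free, $H$ must be a star with some center $c$. If $a_c \geq 2$ and $H$ has at least two edges, or if $a_c \geq 2$ and $a_x \geq 2$ for a single edge $\{c,x\}$, one could again pack two edges; hence $a_c = 1$, and then $\{c\}$ is a vertex cover of weight $1$, contradicting the hypothesis. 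This proves the lemma and hence the reverse inclusion. I expect this matching-versus-cover structural step to be the main obstacle, as it is exactly the place where the triangle-free hypothesis is essential.

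Finally, to compute $\sdefect(\F(G),2) = \mu(\F(G)^{(2)}/\F(G)^2)$ I would exploit the equality just proved: the quotient is generated by the images of the triangle cubics. Because every triangle cubic has degree $3$ while both $\F(G)^2$ and $\m T$ are generated in degree $\geq 4$, the degree-$3$ component of $\F(G)^2 + \m T$ is zero. Since $\F(G)^2 + \m\F(G)^{(2)} = \F(G)^2 + \m T$, the distinct (hence linearly independent) triangle monomials remain linearly independent modulo this submodule, so they form a minimal generating set of $\F(G)^{(2)}/\F(G)^2$. Their number equals the number of triangles of $G$, which yields the stated formula.
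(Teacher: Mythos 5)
The paper itself contains no proof of this theorem: it is quoted from \cite{kumarsymbolicdefect,symbolicdefectedgeideals}, so there is no internal argument to compare yours against, and it must be judged on its own terms. On those terms your strategy is sound and essentially complete: reducing to monomials, translating membership in $\F(G)^{(2)} = \bigcap_C P_C^2$ into the weighted vertex cover condition, restricting to the support, disposing of supports containing triangles, invoking the fact that a graph with no two disjoint edges is a triangle or a star, and finishing the generator count with the degree-$3$ versus degree-$4$ observation. All of these steps are valid, and the final Nakayama-type count of minimal generators is airtight.

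There is, however, one concrete slip in the star case. From unpackability you may only conclude: either $a_c = 1$, or the star is a single edge $\{c,x\}$ with $a_c \geq 2$ and $a_x = 1$. Your text jumps to ``hence $a_c = 1$,'' silently discarding the second alternative; in that leftover case the cover $\{c\}$ has weight at least $2$, so the contradiction you point to does not arise. The conclusion of your lemma still holds there, because $\{x\}$ is then a vertex cover of weight $1$ (equivalently: for a one-edge star, take the lighter endpoint as the center). You should also either treat the edgeless case (where the empty cover has weight $0$) or note it cannot occur, since $x^a \in \F(G)^{(2)} \subseteq \F(G)$ forces $\supp(x^a)$ to contain an edge. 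With those two sentences added, your proof is correct.
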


Since we have a description of the generators of $\F(G)^{(2)}$ for any graph $G$, we can completely describe the Hilbert function of $\F(G)^{(2)}/\F(G)^2$. Before stating the next corollary, we mention the Stanley-Reisner complex of a graph $G$ is called the \textbf{independence complex of $G$}, since its faces are independent sets of $G$. For this reason, we denote by $\Ind(G)$ the Stanley-Reisner complex of $\F(G)$.

\begin{corollary}\label{c:hilbert}
    Let $G$ be a graph and $\mathcal{T}$ the set of triangles of $G$. The Hilbert function of $M = \F(G)^{(2)}/\F(G)^2$ is
    $$
        H_M(z) = \sum_{T \in \mathcal{T}} H_{S/I_T}(z - 3)
    $$
    where $I_T = \F(G - N[T])$ and $H_M(i)$ denotes the Hilbert function of $M$ at $i$.
\end{corollary}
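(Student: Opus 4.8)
The plan is to exhibit an explicit monomial $\K$-basis of $M=\F(G)^{(2)}/\F(G)^2$ and to partition it according to the triangles of $G$. By \cref{t:triangle} we have $\F(G)^{(2)}=\F(G)^2+(x_ix_jx_k : G[\{i,j,k\}]=K_3)$, and since every ideal in sight is monomial, a $\K$-basis of $M$ is given by the residues of the monomials $m$ that are divisible by some triangle monomial $x_T$ but do not lie in $\F(G)^2$. I would first record the elementary observation that every monomial of $\F(G)^2$ has degree at least $4$, so the degree-$3$ triangle monomials genuinely survive in $M$ and the set just described is nonempty of the expected form.

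The key structural step is to show that each such surviving monomial is divisible by \emph{exactly one} triangle. I would prove the contrapositive: if $m$ is divisible by two distinct triangle monomials $x_T$ and $x_{T'}$, then $m\in\F(G)^2$. As $|T\cap T'|\le 2$, a short case analysis on $|T\cap T'|\in\{0,1,2\}$ produces in each case two vertex-disjoint edges of $G$ whose product divides $\mathrm{lcm}(x_T,x_{T'})$ and hence $m$; for instance when $T=\{i,j,k\}$ and $T'=\{i,j,l\}$ the disjoint edges $\{i,k\}$ and $\{j,l\}$ work, and the one- and zero-overlap cases are analogous. Thus a surviving monomial $m$ determines a unique triangle $T$, and I may write $m=x_T\cdot u$.

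Next I would pin down the factor $u$. Writing $m=x_Tu$ with $T=\{i,j,k\}$ the unique triangle, I claim that $m\notin\F(G)^2$ forces three conditions: (i) $\supp(u)\cap T=\emptyset$, since otherwise $x_i^2x_jx_k\mid m$, i.e. $(x_ix_j)(x_ix_k)\mid m$; (ii) $\supp(u)$ is an independent set of $G$, since an edge $\{a,b\}$ inside $\supp(u)$ is disjoint from $\{i,j\}$ and yields $(x_ix_j)(x_ax_b)\mid m$; and (iii) $\supp(u)\cap N[T]=\emptyset$, since a vertex $a\in\supp(u)$ adjacent to (say) $i$ gives $(x_ix_a)(x_jx_k)\mid m$. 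Conversely, for any triangle $T$ and any monomial $u$ whose support is an independent set disjoint from $N[T]$, the only edges of $G$ supported on $\supp(x_Tu)$ lie inside $T$, so no product of two edges can divide $x_Tu$ (such a product would require a repeated $T$-variable, which is absent by (i)); hence $x_Tu\notin\F(G)^2$ and $x_Tu$ is divisible only by the triangle $T$. This establishes a bijection between the degree-$z$ basis monomials of $M$ and the pairs $(T,u)$ with $T\in\mathcal T$ and $u$ a degree-$(z-3)$ monomial supported on an independent set of $G-N[T]$.

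Finally I would translate this bijection into the Hilbert-function identity. For a fixed triangle $T$, the monomials $u$ of degree $z-3$ supported on an independent set of $G$ disjoint from $N[T]$ are exactly the degree-$(z-3)$ monomials surviving in the quotient by $I_T=\F(G-N[T])$, whence their number is $H_{S/I_T}(z-3)$. Summing over the unique triangle attached to each basis element yields $H_M(z)=\sum_{T\in\mathcal T}H_{S/I_T}(z-3)$. I expect the main obstacle to be the uniqueness-of-triangle step together with the care needed to rule out that a repeated variable or an edge reaching into $N[T]$ slips $m$ into $\F(G)^2$; once the three support conditions (i)--(iii) are isolated, the bijection and the resulting Hilbert-function identity follow immediately.
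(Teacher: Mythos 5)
Your proposal is correct and takes essentially the same route as the paper's proof: both rest on \cref{t:triangle} together with the structural fact that a monomial surviving in $\F(G)^{(2)}/\F(G)^2$ factors as $x_T u$ for a \emph{unique} triangle $T$ with $\supp(u)$ an independent set avoiding $N[T]$, which then yields the counting identity term by term. The only difference is one of detail: the paper simply asserts this fact (``$G_h$ is a single triangle with extra isolated vertices'') and hides the disjointness of the union in a $\bigsqcup$, whereas you prove it explicitly via the case analysis on $|T\cap T'|$ and conditions (i)--(iii), filling in exactly the steps the paper leaves to the reader.
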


\begin{proof}
    By \cref{t:triangle}, we know $\F(G)^{(2)} = \F(G)^2 + (x_i x_j x_k \st \{i,j,k\} \in \mathcal{T})$. For $T = \{i,j,k\} \in \mathcal{T}$, set $x_T = x_i x_j x_k$. Note that if a monomial $h$ is nonzero in $\F(G)^{(2)}/\F(G)^2$, then $G_h$ is a single triangle $T$ with extra isolated vertices, in particular, $\supp(h) \subset V(G - N[T]) \cup T$. The following equalities hold 
    
    \begin{align*}
        H_M(z)  & = \dim_{\K} \Bigg{(}\frac{\F(G)^2 + (x_i x_j x_k \st \{i,j,k\} \in \mathcal{T})}{\F(G)^2}\Bigg{)}_z \\
                & = \Big{|}\bigcup_{T \in \mathcal{T}} \big{\{} m x_T \st \supp(m) \in \Ind(G - N[T]) \qand \deg mx_T = z \big{\}}\Big{|} \\
                & = \Big{|}\bigsqcup_{T \in \mathcal{T}} \big{\{} m \st \supp(m) \in \Ind(G - N[T]) \qand \deg m = z - 3 \big{\}}\Big{|} \\
                & = \sum_{T \in \mathcal{T}} H_{S/I_T}(z - 3),
    \end{align*}
    where $\sqcup$ denotes disjoint union.
\end{proof}

\begin{example}
    Let $G = \Delta^{(1)}$, where $\Delta$ is the Stanley-Reisner complex of $w(P_4)$ from \cref{e:stanleyfacet}. Then the set $\mathcal{T}$ from \cref{c:hilbert} is 
    $$
        \mathcal{T} = \{\{a,e,f\}, \{a,e,c\}, \{c,d,e\}, \{d,e,f\}, \{b,d,f\}\}.
    $$
    The graphs $G - N[T]$ for $T \in \mathcal{T}$ are the empty graph, except for $G - N[\{a,e,c\}]$, which is a single isolated vertex $b$. In particular, the Hilbert function of $M = \F(G)^{(2)}/\F(G)^2$ is 
    $$
        H_M(3) = |\{aef, aec, cde, def, bdf\}| = 5 \qand H_M(i) = |\{aec b^{i - 3}\}| = 1 \text{ for $i > 3$.} 
    $$
\end{example}

Given a pure $d$-dimensional simplicial complex $\Delta$, \cref{hausel,p:ntfanalyticspread} imply for every $i < d$, there exists $m$ such that $\F(\Delta^{(i)})^{(m)} \neq \F(\Delta^{(i)})^{m}$. Our final goal is to show that $m$ can always be taken to be $2$, and for the context of symbolic defect polynomials, $\Delta$ does not have to be pure.

\begin{theorem}\label{t:nozero}
    The coefficients of the second symbolic defect polynomial of a $d$-dimensional simplicial complex $\Delta$ form a sequence with no internal zeros. Moreover, if $\mu_2(\Delta, t) = c_2 t^3 + \dots + c_d t^{d + 1}$, then $c_i \geq f_i(\Delta)$ for $i > 2$ and $c_2 = f_2$..
\end{theorem}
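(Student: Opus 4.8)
The plan is to read off that the coefficient $c_k$ ($2 \le k \le d$) of $\mu_2(\Delta,t)$ is exactly $\sdefect(\F(\Delta^{[k-1]}),2)$, to bound each such symbolic defect below by $f_k(\Delta)$, and then to deduce the no-internal-zeros statement from sheer positivity. First I would reduce the computation of $\sdefect(J,2)=\mu(J^{(2)}/J^2)$, for a squarefree monomial ideal $J$, to a count of monomials. By Nakayama $\mu(J^{(2)}/J^2)=\dim_{\K}\bigl(J^{(2)}/(J^2+\m J^{(2)})\bigr)$, and since $J^2+\m J^{(2)}$ is a monomial ideal, a minimal monomial generator $g$ of $J^{(2)}$ survives in this quotient if and only if $g\notin J^2$. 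Hence it suffices to exhibit $f_{i+1}(\Delta)$ distinct minimal generators of $J^{(2)}$ not in $J^2$ for $J=\F(\Delta^{[i]})$, $1\le i\le d-1$.

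The heart of the argument is a symbolic-power computation through minimal vertex covers. Because $J=\F(\Delta^{[i]})$ is squarefree, $J^{(2)}=\bigcap_P P^2$ over the minimal primes $P=(x_j:j\in C)$, where $C$ runs over the minimal vertex covers of the hypergraph whose edges are the $i$-faces of $\Delta$; for a squarefree monomial $x_A$ this reads $x_A\in J^{(2)}\iff |C\cap A|\ge 2$ for every minimal cover $C$. For each $(i+1)$-face $\sigma$ of $\Delta$ I would show $x_\sigma$ is one of the desired generators. Membership $x_\sigma\in J^{(2)}$ holds since, if some minimal cover $C$ met $\sigma$ in at most one vertex $v$, the $i$-face $\sigma\setminus\{v\}$ (an edge of the hypergraph) would be uncovered, a contradiction. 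For minimality I must check $x_{\sigma\setminus\{v\}}\notin J^{(2)}$ for every $v\in\sigma$: writing $\tau=\sigma\setminus\{v\}$, any $i$-subset $\tau'\subsetneq\tau$ contains no edge (edges have $i+1$ vertices), so $\tau'$ is independent and extends to a maximal independent set $\hat J\supseteq\tau'$; as $\tau$ itself is an edge, $\hat J$ meets $\tau$ in exactly $i$ vertices, so its complement is a minimal cover $C$ with $|C\cap\tau|=1$, whence $x_\tau\notin P^2$. Finally $x_\sigma\notin J^2$ for degree reasons, since $\deg x_\sigma=i+2<2i+2$ for $i\ge 1$. Distinct faces give distinct monomials, yielding $\sdefect(\F(\Delta^{[i]}),2)\ge f_{i+1}(\Delta)$, i.e.\ $c_k\ge f_k$ for all $2\le k\le d$.

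For the sharp value $c_2=f_2$ I would instead invoke \cref{t:triangle}: $\sdefect(\F(\Delta^{[1]}),2)$ equals the number of triangles of the graph $\Delta^{[1]}$, and under the flag hypothesis every triangle of $\Delta^{[1]}$ is a $2$-face, so $c_2=f_2$. Since a $d$-dimensional complex contains a $d$-face and hence has $f_i\ge 1$ for all $0\le i\le d$, the inequalities $c_k\ge f_k\ge 1$ force every coefficient $c_2,\dots,c_d$ to be strictly positive, so the coefficient sequence has no zeros at all, in particular no internal zeros. The step I expect to be the main obstacle is the vertex-cover bookkeeping inside the symbolic square—above all the minimality verification, where one must produce, for each $\tau=\sigma\setminus\{v\}$, a genuine minimal cover meeting $\tau$ in a single vertex; by contrast the Nakayama reduction to counting generators, the degree comparison, and the final positivity argument are routine.
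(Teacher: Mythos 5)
Your proposal is correct, and its skeleton coincides with the paper's: both arguments exhibit, for each $(i+1)$-face $\sigma$ of $\Delta$, the squarefree monomial $x_\sigma$ as a minimal generator of $\F(\Delta^{[i]})^{(2)}$ lying outside $\F(\Delta^{[i]})^2$; both prove membership in the symbolic square by showing that every minimal prime contains at least two variables indexed by $\sigma$ (your minimal-vertex-cover phrasing and the paper's statement that every facet of the Stanley--Reisner complex $\Gamma_i$ of $\F(\Delta^{[i]})$ misses at least two vertices of $\sigma$ are dual formulations of the same fact); and both rule out membership in the ordinary square by the degree bound $i+2 < 2(i+1)$. The one genuine divergence is the minimality verification. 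The paper gets it almost for free from the containment $\F(\Delta^{[i]})^{(2)} \subseteq \m\,\F(\Delta^{[i]})$, quoted from \cite{MR3589840}: any element of $\m\,\F(\Delta^{[i]})^{(2)} \subseteq \m^2\F(\Delta^{[i]})$ has degree at least $i+3$, so the degree-$(i+2)$ element $x_\sigma$ must be a minimal generator. You instead argue directly: extending an $i$-subset of $\tau=\sigma\setminus\{v\}$ to a maximal independent set whose complement is a minimal cover meeting $\tau$ in exactly one vertex, you show no proper divisor of $x_\sigma$ lies in the symbolic square. Your route costs more combinatorial bookkeeping but is self-contained, and it also makes explicit the Nakayama-type reduction (minimal monomial generators of $J^{(2)}$ outside $J^2$ count $\sdefect(J,2)$) that the paper leaves implicit. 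A further point in your favour: for $c_2=f_2$ you correctly note that \cref{t:triangle} counts the triangles of the graph $\Delta^{[1]}$, which equals $f_2$ only under the flagness hypothesis appearing in the introduction's version of the theorem; the proof of \cref{t:nozero} in the body of the paper silently elides this hypothesis.
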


\begin{proof}
    We will show that $\F(\Delta^{[i]})^{(2)} \neq \F(\Delta^{[i]})^2$ for every $i < d$, where $\Delta^{[i]}$ is the simplicial complex whose facets are the $i$-faces of $\Delta$. 
    Since $i < d$, we know for every $i$, the ideal $\F(\Delta^{[i]})$ is the facet ideal of a simplicial complex that contains the boundary of an $i + 1$-simplex, otherwise the dimension of $\Delta$ would be less than $d$. 
    Let $\{j_1, \dots, j_{i + 2}\}$ be the vertices of such a simplex and $\Gamma_i$ the Stanley-Reisner complex of $\F(\Delta^{[i]})$. We know the facets of $\Gamma_i$ correspond to the complements of the associated primes of $\F(\Delta^{[i]})$. Note that since the facet complex of $\F(\Delta^{[i]})$ contains the boundary of the simplex $\{j_1, \dots, j_{i + 2}\}$, every subset of size $i + 1$ of this set of indices is a generator of $\F(\Delta^{[i]})$. In particular, since generators of $\F(\Delta^{[i]})$ correspond to minimal non-faces of $\Gamma_i$, we conclude every facet of $\Gamma_i$ contains at most $i$ elements of $\{j_1, \dots, j_{i + 2}\}$, and thus for every facet $F$ of $\Gamma_i$, there exists $k_1, k_2 \in \{j_1, \dots, j_{i + 2}\} \cap ([n] \setminus F)$ so $m = x_{j_1} \dots x_{j_{i + 2}} \in (x_l : l \in [n] \setminus F)^2$ for every facet $F$, and in particular, $m \in \F(\Delta^{[i]})^{(2)}$.
    Since $\deg m = i + 2 < 2(i + 1)$, we conclude $m \not \in \F(\Delta_i)^2$, and thus $\mu(\F(\Delta_i)^{(2)}/\F(\Delta_i)^2) > 0$.
    
    For the last part of the statement, the $i = 2$ case follows from \cref{t:triangle}. For the $i > 2$ case, note that $\F(\Delta^{[i]})$ is generated by monomials of degree $i + 1$, and we have found elements of degree $i + 2$ that are nonzero in $\F(\Delta^{[i]})^{(2)}/\F(\Delta^{[i]})^2$. 
    Since $\F(\Delta^{[i]})^{(2)} \subset (x_1, \dots, x_n) \F(\Delta^{[i]})$ (see \cite[Proposition 3.8]{MR3589840}), we conclude these elements which correspond to $i+1$-faces of $\Delta$ must be generators of $\F(\Delta^{[i]})^{(2)}$, hence $\sdefect(\F(\Delta^{[i]}), 2) = c_i \geq f_i > 0$.
\end{proof}

\section{(Mixed) Eulerian numbers, (mixed) multiplicities and pure simplicial complexes}\label{s:mvmm}

We now focus our attention to the study of multiplicities of squarefree monomial ideals. We refer the reader to \cite{MR2320648,MR1812818} for more details on mixed multiplicities of ideals and mixed volumes of polytopes.  Given a sequence of ideals $I_1, \dots, I_s \subset S = \K[x_1, \dots, x_n]$ and $\m$ the maximal ideal $(x_1, \dots, x_n)$, the multigraded Hilbert function of the algebra
$$
    R(\m | I_1, \dots, I_s) = \bigoplus_{\textbf{u} \in \N^{s + 1}} \frac{\m^{u_0}I_1^{u_1} \dots I_s^{u_s}}{\m^{u_0 + 1}I_1^{u_1} \dots I_s^{u_s}}
$$
is a polynomial $P$ for $u_0, \dots, u_s \gg 0$. By the results in \cite{MR1812818}, the polynomial $P(u_0, \dots, u_{s})$ can be written as
$$
    P(u_0, \dots, u_{s}) = \sum_{a_0 + \dots + a_s = n - 1} \frac{e_{a_0, \dots, a_s}(\m | I_1, \dots, I_s)}{a_0! \dots a_s!} u_0^{a_0} \dots u_s^{a_s} + \text{ lower degree terms}.
$$
The numbers $e_{\textbf{a}}(\m | I_1, \dots, I_s) = e_{a_0, \dots, a_s}(\m | I_1, \dots, I_s)$ are called the \textbf{mixed multiplicities} of $\m, I_1, \dots, I_s$. The following results are the key for the results in this section.

\begin{theorem}[Theorem 4.4 \cite{MR4157582}]\label{t:positivity}
    Let $S = \K[x_1, \dots, x_n]$, $\m = (x_1, \dots, x_n)$ and $I_1, \dots, I_s$ be arbitrary ideals of $S$ such that $I_i$ is generated by homogeneous elements of same degree $d_i > 0$. Let ${\bf a} = (a_0, \dots, a_s) \in \N^{s + 1}$ such that $a_0 + \dots + a_s = n - 1$. Then $e_{{\bf a}}(\m | I_1, \dots, I_s) > 0$ if and only if for every $J = \{j_1, \dots, j_t\} \subset \{1, \dots s\}$ the inequality
    $$
        a_{j_1} + \dots + a_{j_t} \leq \ell(I_{j_1}\dots I_{j_t}) - 1
    $$
    holds, where $\ell(I_{j_1}\dots I_{j_t})$ is the analytic spread of the ideal $I_{j_1}\dots I_{j_t}$.
\end{theorem}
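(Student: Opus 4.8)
The plan is to work directly with the multigraded Hilbert polynomial $P(u_0, \dots, u_s)$ of the algebra $R(\m | I_1, \dots, I_s)$, since by definition the numbers $e_{\mathbf{a}}(\m | I_1, \dots, I_s)$ are exactly the (normalized) coefficients of its top-degree form. Thus $e_{\mathbf{a}} > 0$ is equivalent to the monomial $u_0^{a_0}\cdots u_s^{a_s}$ occurring with nonzero coefficient in the leading form of $P$, and the whole statement becomes a combinatorial description of the support of that leading form. Throughout I would use the classical fact that all mixed multiplicities are nonnegative, so that no cancellation can occur when different monomials are collected together; this is what lets one read off degree bounds from positivity.

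For the necessity ("only if") direction, I would fix $J = \{j_1, \dots, j_t\} \subseteq \{1, \dots, s\}$ and set $I_J = I_{j_1}\cdots I_{j_t}$. Specializing $P$ by setting $u_k = 0$ for $k \notin J \cup \{0\}$ and then restricting the remaining $J$-variables to the diagonal $u_{j} = u$ produces the two-variable Hilbert polynomial of $R(\m | I_J)$, because on the diagonal $\prod_{j \in J} I_j^{u} = I_J^{u}$. Since the mixed multiplicities are nonnegative, if $e_{\mathbf{a}} > 0$ then this diagonal polynomial has degree at least $\sum_{j \in J} a_j$ in $u$. The key classical input is that the fiber cone of $I_J$ has Krull dimension $\ell(I_J)$, so the maximal $I_J$-degree supporting a nonzero mixed multiplicity against $\m$ is $\ell(I_J) - 1$ (the fiber-cone dimension drops the polynomial degree by one). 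Hence $\sum_{j \in J} a_j \leq \ell(I_J) - 1$, as required.

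For the sufficiency ("if") direction, I would construct a system of general elements realizing $\mathbf{a}$ via (weak) filter-regular, or superficial, sequences adapted to the multigraded setting: pick $a_i$ general elements of the degree-$d_i$ component of $I_i$ for each $i \geq 1$ and $a_0$ general linear forms of $\m$, then reduce one coordinate at a time using the standard formula $e_{\mathbf{a}}(\m | I_1, \dots, I_s) = e_{\mathbf{a} - \mathbf{e}_i}(\m | I_1, \dots, I_s;\, S/(x))$ valid when $x \in I_i$ is filter-regular. The analytic-spread inequalities are precisely the hypotheses guaranteeing that at each stage a general element of the relevant ideal can still be chosen filter-regular, i.e. avoids the obstructing associated primes, since $\ell(I_J)$ is exactly the maximal number of analytically independent elements extractable from $I_J$ (McAdam's characterization). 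When all inequalities hold the reduction terminates at $\mathbf{a} = \mathbf{0}$ in a one-dimensional residual ring of positive length, yielding $e_{\mathbf{a}} > 0$.

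I expect the main obstacle to be the sufficiency direction, and specifically the bookkeeping needed to show that the inequalities for \emph{all} subsets $J$ simultaneously permit a single coherent choice of filter-regular system. The subtlety is that reducing by a general element of $I_i$ alters the analytic spreads $\ell(I_J)$ for every $J$ containing $i$, so one must verify that the entire family of inequalities is preserved, in its correctly shifted form, after each reduction step. Controlling this interplay between the analytic spread of products, the behavior of superficial elements, and the precise ``$-1$'' coming from the fiber-cone dimension is where the real work lies; the necessity direction, by contrast, is a comparatively direct degree estimate.
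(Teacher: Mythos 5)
You should know at the outset that the paper does not prove this statement at all: it is quoted verbatim as Theorem 4.4 of \cite{MR4157582}, so your proposal can only be measured against the argument given there, and on its own terms it has genuine gaps in both directions.

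The necessity direction breaks at the step ``if $e_{\mathbf{a}}>0$ then the specialized diagonal polynomial has degree at least $\sum_{j\in J}a_j$ in $u$.'' If $a_k>0$ for some $k\notin J\cup\{0\}$, then setting $u_k=0$ annihilates the monomial $u_0^{a_0}\cdots u_s^{a_s}$, so the positivity of $e_{\mathbf{a}}$ imposes no lower bound whatsoever on the specialized polynomial; nonnegativity of the surviving coefficients prevents cancellation but cannot create degree. Your argument is sound only when $\mathbf{a}$ is supported on $J\cup\{0\}$, and even then it silently uses two nontrivial inputs: (i) that evaluating the Hilbert polynomial at $u_k=0$ computes the mixed multiplicities of the subfamily --- not automatic, since the polynomial agrees with the Hilbert function only when \emph{all} coordinates are large, and this requires the known lemma of Trung and Trung--Verma on deleting zero entries; and (ii) the bivariate statement $e_{(a_0,\beta)}(\mathfrak{m}\,|\,I_J)>0\Rightarrow\beta\leq\ell(I_J)-1$, which is Trung's theorem, i.e.\ the $s=1$ instance of the very result being proved (citable, but it should be flagged as such). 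To handle general $\mathbf{a}$ you would need a transfer property (positivity of $e_{\mathbf{a}}$ implies positivity of the vector obtained by moving all mass outside $J$ onto the $0$-th coordinate); that property is an easy consequence of the theorem but proving it directly is essentially the sufficiency direction, so as written the ``comparatively direct'' half is circular or incomplete.

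For sufficiency you have correctly located the difficulty but not resolved it: the assertion that the inequalities for all subsets $J$ permit, at every stage, a choice of filter-regular element whose quotient preserves the shifted inequalities is the entire content of the theorem, and no mechanism for it is offered. Moreover the reduction formula is not $e_{\mathbf{a}}=e_{\mathbf{a}-\mathbf{e}_i}(\ldots;S/(x))$ on the nose: for a general $x\in I_i$ one must pass to a saturation $S/((x):I_i^{\infty})$, or otherwise discard the components on which $I_i$ becomes irrelevant, and tracking how the analytic spreads $\ell(I_J)$ of all products behave under that operation is exactly where superficial-element approaches (Trung, Trung--Verma) historically obtained only partial criteria. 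The proof in \cite{MR4157582} takes a different route that bypasses this bookkeeping: it first establishes a positivity criterion for the multidegrees of an arbitrary multiprojective scheme $X$ in terms of the dimensions of its coordinate projections $\Pi_J(X)$ (necessity by cutting with general hyperplanes pulled back from the factors in $J$, sufficiency via the polymatroid structure of the support of the multidegree), and then identifies, for the scheme attached to $R(\mathfrak{m}\,|\,I_1,\ldots,I_s)$, the dimension of $\Pi_J$ with $\ell(I_{j_1}\cdots I_{j_t})-1$. If you want to rescue your outline, the realistic path is to prove that geometric criterion first; Hilbert-polynomial manipulations plus nonnegativity alone will not yield either direction in full generality.
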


\begin{lemma}[Corollary 3.12 \cite{MR4413366}]\label{l:spreadproduct}
    Let $I, J$ be nonzero ideals of $S = \K[x_1, \dots, x_n]$. Then $\ell(IJ) \geq \max \{\ell(I), \ell(J)\}$.
\end{lemma}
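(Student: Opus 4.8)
The statement is symmetric in $I$ and $J$, so it suffices to prove $\ell(IJ) \ge \ell(I)$. Recall that the analytic spread is the Krull dimension of the special fiber ring $\mathcal{A}(I) := S[It]/\m S[It] = \bigoplus_{n \ge 0} I^n/\m I^n$, and likewise $\ell(IJ) = \dim \mathcal{A}(IJ)$ with $\mathcal{A}(IJ) = \bigoplus_{n\ge 0} I^nJ^n/\m I^nJ^n$; both are finitely generated graded $\K$-algebras since $S$ is Noetherian. My plan is to produce a graded $\K$-algebra homomorphism $\mathcal{A}(I) \to \mathcal{A}(IJ)$ and deduce the dimension inequality from it. Since $S$ is a domain and $J \ne 0$, fix a nonzero $g \in J$; then $g^n \in J^n$, so multiplication by $g^n$ carries $I^n$ into $I^nJ^n$ and $\m I^n$ into $\m I^nJ^n$. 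Hence $\phi([at^n]) := [g^n a\, t^n]$ is a well-defined graded $\K$-linear map $\mathcal{A}(I) \to \mathcal{A}(IJ)$, and the identity $g^n a \cdot g^m b = g^{n+m}(ab)$ shows it is a ring homomorphism.

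Once $\phi$ is known to be injective, the inequality $\dim\mathcal{A}(I) \le \dim \mathcal{A}(IJ)$ follows formally. Let $\mathfrak p$ be a minimal prime of $\mathcal{A}(I)$ with $\dim \mathcal{A}(I)/\mathfrak p = \dim \mathcal{A}(I)$; localizing the injection $\phi$ at the multiplicative set $\mathcal{A}(I)\setminus\mathfrak p$ keeps it injective and the source nonzero, so the target is nonzero and $\mathfrak p$ is the contraction of some prime $\mathfrak q$ of $\mathcal{A}(IJ)$. Then $\mathcal{A}(I)/\mathfrak p \hookrightarrow \mathcal{A}(IJ)/\mathfrak q$ is an injection of finitely generated $\K$-domains, whence $\dim \mathcal{A}(I)/\mathfrak p = \operatorname{trdeg}_\K \mathcal{A}(I)/\mathfrak p \le \operatorname{trdeg}_\K \mathcal{A}(IJ)/\mathfrak q \le \dim \mathcal{A}(IJ)$, giving the claim. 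Thus the entire weight of the lemma rests on the injectivity of $\phi$, i.e. the implication $g^n a \in \m I^nJ^n \Rightarrow a \in \m I^n$, and I expect this to be the main obstacle. It is not formal, since tensoring the injection $I^n \xrightarrow{\cdot g^n} I^nJ^n$ with $\K = S/\m$ need not preserve injectivity; the obstruction is the connecting map $\operatorname{Tor}_1^S(\operatorname{coker}(\cdot g^n), \K) \to I^n \otimes_S \K$, and controlling it is precisely the content of the cited \cite{MR4413366}.

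In the situation actually used in this paper all ideals in sight are monomial and generated in a single degree, and there the obstruction evaporates. Suppose $I = (x^{u_1}, \dots, x^{u_p})$ and $J = (x^{v_1}, \dots, x^{v_q})$ with every $u_i$ of coordinate sum $d$ and every $v_j$ of coordinate sum $e$. Taking $g = x^{v_1}$ and a minimal monomial generator $x^w$ of $I^n$, the monomial $g^n x^w$ lies in $I^nJ^n$ and has degree $n(d+e)$, which is the least degree of any nonzero element of the equigenerated ideal $I^nJ^n$; hence $g^n x^w \notin \m I^nJ^n$, so it is a minimal generator, and $\phi$ sends distinct minimal generators of $I^n$ to distinct minimal generators of $I^nJ^n$, proving injectivity. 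Most transparently, one can bypass $\phi$ altogether: by \cref{rankanalyticspread} one has $\ell(I) = \dim_{\mathbb Q}\langle u_1, \dots, u_p\rangle$ and $\ell(IJ) = \dim_{\mathbb Q}\langle u_i + v_j\rangle$, and a direct computation gives
$$
\langle u_i + v_j : i, j\rangle = \langle u_i - u_1\rangle + \langle v_j - v_1\rangle + \mathbb{Q}(u_1 + v_1).
$$
Since $u_1 + v_1$ has coordinate sum $d+e \neq 0$ while $\langle u_i - u_1\rangle + \langle v_j - v_1\rangle$ consists of vectors of coordinate sum $0$, adding the last factor raises the dimension by exactly $1$, so $\ell(IJ) = \dim\big(\langle u_i - u_1\rangle + \langle v_j - v_1\rangle\big) + 1 \ge \dim\langle u_i - u_1\rangle + 1 = \ell(I)$, and symmetrically $\ell(IJ) \ge \ell(J)$. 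This settles the equigenerated monomial case in every characteristic, while the general statement is \cite{MR4413366}.
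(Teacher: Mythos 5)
The paper does not prove this lemma at all: it is imported verbatim as Corollary 3.12 of \cite{MR4413366}, so there is no internal proof to compare yours against; the only question is whether your argument stands on its own. As a proof of the statement in the generality written (arbitrary nonzero ideals $I, J$), it does not, and you say so yourself: everything reduces to injectivity of the fiber-cone map $\phi([at^n]) = [g^n a\, t^n]$, i.e.\ to the implication $g^n a \in \m I^n J^n \Rightarrow a \in \m I^n$, and this is not formal --- it is exactly the mathematical content of the cited result, so deferring it back to \cite{MR4413366} leaves the general lemma unproved. The surrounding scaffolding (symmetry reduction, localization at a minimal prime, the transcendence-degree comparison of finitely generated $\K$-domains) is correct but routine; the gap is precisely where you located it.

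What you do prove, completely and correctly, is the lemma for monomial ideals generated in single degrees, and that happens to be the only case this paper ever uses: \cref{t:posmm} applies \cref{l:spreadproduct} only to products of the equigenerated squarefree ideals $\F(\Delta^{(i)})$. Your log-matrix argument is the cleanest route: by \cref{rankanalyticspread}, $\ell(I)$ and $\ell(IJ)$ are ranks of exponent matrices; the identity $\langle u_i + v_j\rangle = \langle u_i - u_1\rangle + \langle v_j - v_1\rangle + \mathbb{Q}(u_1 + v_1)$ is verified by the two one-line inclusions, and since $u_1 + v_1$ has coordinate sum $d + e > 0$ while the other two summands consist of coordinate-sum-zero vectors, the count $\ell(IJ) = \dim\bigl(\langle u_i - u_1\rangle + \langle v_j - v_1\rangle\bigr) + 1 \geq \dim\langle u_i - u_1\rangle + 1 = \ell(I)$ goes through. (One point worth making explicit: the products $x^{u_i + v_j}$ need not minimally generate $IJ$, but any redundant product has the same degree $d+e$ as the minimal generators and hence coincides with one of them, so the row span, and therefore the rank, is unaffected.) Your degree argument for injectivity of $\phi$ in the equigenerated monomial case is also fine. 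This elementary argument is genuinely different from the machinery behind the cited Corollary 3.12, and if the lemma were restated for equigenerated monomial ideals it would make the paper self-contained at this point at the cost of one paragraph --- a trade-off worth considering.
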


We can then show the following regarding positivity of mixed multiplicities of facet ideals of skeleta.

\begin{theorem}[\bf{Positivity of mixed multiplicities of skeleta of pure complexes}]\label{t:posmm}
    Let $S = \K[x_1, \dots, x_n]$ and $\Delta$ a pure $d$-dimensional simplicial complex on vertex set $[n]$, and 
    $$
        \F(\Delta^{(0)}), \dots, \F(\Delta^{(d - 1)})
    $$
    the facet ideals of the $0 \leq i \leq d - 1$ skeletons of $\Delta$, where $\m = \F(\Delta^{(0)})$. Then 
    $$
        e_{{\bf a}}(\m | \F(\Delta^{(1)}), \dots, \F(\Delta^{(d - 1)})) > 0
    $$
    for every ${\bf a} \in \N^d$ such that $a_0 + \dots + a_{d - 1} = n - 1$. 
\end{theorem}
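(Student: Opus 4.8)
The plan is to apply the positivity criterion of \cref{t:positivity} directly, with $\m$ in the role of the distinguished ideal and $\F(\Delta^{(1)}), \dots, \F(\Delta^{(d-1)})$ in the roles of $I_1, \dots, I_{d-1}$ (so that $s = d-1$ and the exponent vector $\mathbf{a} = (a_0, \dots, a_{d-1})$ has its $0$-th slot reserved for $\m$). First I would check the hypotheses: each $\F(\Delta^{(j)})$ is generated by the squarefree monomials corresponding to the $j$-faces of $\Delta$, hence is generated in the single positive degree $j + 1$, while $\m$ is generated in degree $1$. Thus \cref{t:positivity} applies, and it remains only to verify its combinatorial inequalities.

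According to \cref{t:positivity}, positivity of $e_{\mathbf{a}}(\m \mid \F(\Delta^{(1)}), \dots, \F(\Delta^{(d-1)}))$ is equivalent to the assertion that for every nonempty subset $J = \{j_1, \dots, j_t\} \subseteq \{1, \dots, d-1\}$ one has
$$
    a_{j_1} + \dots + a_{j_t} \leq \ell\big(\F(\Delta^{(j_1)}) \cdots \F(\Delta^{(j_t)})\big) - 1.
$$
The heart of the argument is to evaluate the analytic spread on the right. Since each index satisfies $1 \leq j_k \leq d - 1 < d$, \cref{c:maxanalyticspread} gives $\ell(\F(\Delta^{(j_k)})) = n$ for every $k$. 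Applying \cref{l:spreadproduct} inductively to the product then yields
$$
    \ell\big(\F(\Delta^{(j_1)}) \cdots \F(\Delta^{(j_t)})\big) \geq \max_{1 \leq k \leq t} \ell(\F(\Delta^{(j_k)})) = n.
$$
Combining this with the trivial bound $\ell(I) \leq \dim S = n$, valid for any ideal $I \subset S$, I conclude that the analytic spread of the product equals $n$.

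Finally I would close the argument by observing that, because $a_0 + a_1 + \dots + a_{d-1} = n - 1$ with all $a_i \geq 0$, every partial sum obeys $a_{j_1} + \dots + a_{j_t} \leq n - 1 = \ell\big(\F(\Delta^{(j_1)}) \cdots \F(\Delta^{(j_t)})\big) - 1$, so the inequality demanded by \cref{t:positivity} holds for every $J$, giving the claimed positivity. I do not expect a serious obstacle: the substantive input is exactly the maximality of analytic spread in \cref{c:maxanalyticspread} (which itself rests on Hausel's injectivity theorem), and once that is in hand the verification is essentially bookkeeping. The only points deserving care are confirming the single-degree generation hypothesis of \cref{t:positivity} and matching the index conventions so that $\m$ occupies the $0$-th position while the criterion ranges only over subsets of $\{1, \dots, d-1\}$.
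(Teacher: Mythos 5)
Your proposal is correct and follows essentially the same route as the paper's own proof: apply the positivity criterion of \cref{t:positivity}, use \cref{c:maxanalyticspread} to get $\ell(\F(\Delta^{(i)})) = n$, and use \cref{l:spreadproduct} to conclude the product ideals also have analytic spread $n$, so every partial sum $a_{j_1} + \dots + a_{j_t} \leq n-1$ satisfies the required inequality. The only (welcome) difference is that you make explicit the upper bound $\ell(I) \leq \dim S = n$ needed to turn the inequality from \cref{l:spreadproduct} into an equality, a step the paper leaves implicit.
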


\begin{proof}
    Since $\Delta$ is pure, we know by \cref{c:maxanalyticspread} that $\ell(\F(\Delta^{(i)})) = n$ for every $1 \leq i \leq d - 1$. By \cref{l:spreadproduct} we conclude for every subset $J = \{j_1, \dots, j_t\} \subset \{1, \dots, d - 1\}$ that 
    $$
        \ell(\F(\Delta^{(j_1)})\dots\F(\Delta^{(j_t)})) = n.
    $$
    In particular, the inequality from \cref{t:positivity} becomes
    $$
        a_{j_1} + \dots + a_{j_t} \leq n - 1
    $$
    which is always true since $a_0 + \dots + a_{d - 1} = n - 1$.
\end{proof}

\begin{remark}
    In the case where $\Delta$ is a simplex, \cref{t:posmm} follows directly by a classical result due to Laplace \cite{laplace}, and the equivalences between mixed multiplicities and mixed volumes from \cite{MR2320648}. This connection was first noticed in~\cite{MR3802324}, where the authors computed mixed and $j$-multiplicities of ideals generated by every squarefree monomials in a given degree in terms of (mixed) Eulerian numbers.
\end{remark}

The number of permutations of $n$ elements that have exactly $k$ descents is known as the \textbf{Eulerian number} $A(n, k)$ and it has been extensively studied, leading to several generalizations (see for example \cite{MR2487491,MR4621503,katz2023matroidal}). A classical result due to Laplace~\cite{laplace} implies the Eulerian numbers can also be described as the volume of the \textbf{hypersimplex} $\Delta_{n, k}$ which is defined as the convex hull of the following set of points
$$
    \{e_{i_1} + \dots + e_{i_k} \st \{i_1, \dots, i_k\} \subset [n]\} \subset \R^n,
$$
in other words, the convex hull of every vector with $n - k$ zero entries, and $k$ ones.
In \cite{MR2487491}, Postnikov began the study of the mixed volumes of hypersimplices $\Delta_{n, 1}, \dots, \Delta_{n, n - 1}$, which are now called the \textbf{mixed Eulerian numbers}. One of the main results of \cite{MR2487491} is a series of $9$ combinatorial properties that these numbers satisfy, the first one being the fact that every mixed Eulerian number is positive. 

Note that by taking indicator vectors of $k - 1$-faces of the $n - 1$-simplex (i.e. the face $\{1,2,3\}$ corresponds to the vector $(1,1,1,0,0,\dots, 0)$), the vertices of $\Delta_{n, k}$ can be seen as the $k - 1$-faces of a $n - 1$ dimensional simplex. From this perspective, \cref{t:posmm} is a generalization of the fact that mixed Eulerian numbers are positive. This leads us to the following definition

\begin{definition}[\textbf{Simplicial mixed Eulerian numbers}]
    Let $\Delta$ be a $d$-dimensional simplicial complex on vertex set $[n]$. Let $\F(\Delta^{(i)})$ be the facet ideal of the $i$-th skeleton of $\Delta$ for $1 \leq i < d$, and $\m = \F(\Delta^{(0)})$.

    Given ${\bf c} = (c_0, \dots, c_{d - 1}) \in \N^d$ such that $c_0 + \dots + c_{d - 1} = n - 1$, the \textbf{simplicial mixed Eulerian number} is defined as 
    $$
        A_{c_{0}, \dots, c_{d - 1}}(\Delta) := e_{{\bf c}}(\m | \F(\Delta(1)), \dots, \F(\Delta(d - 1)))
    $$
\end{definition}

\begin{remark}
    From the results in \cite{MR2320648} simplicial mixed Eulerian numbers can also be defined in terms of mixed volumes of polytopes whose vertices correspond to faces of $\Delta$.    
\end{remark}

Also in \cite{MR2487491}, Postnikov defined the notion of generalized permutohedra. Recently in~\cite[Proposition 2.6]{MR4064768}, the authors characterized generalized permutohedra as polytopes where every edge is in the direction $e_i - e_j$ for $i \neq j$. We now give an example that the polytope whose vertices correspond to the $i$-faces of a $d$-dimensional pure simplicial complex is not necessarily a generalized permutohedron. The following example was found using SageMath~\cite{sagemath}.

\begin{example}\label{e:generalizedpermutohedron}
    Let $\Delta = \tuple{\{1,2,3\}, \{1,3,4\}, \{1,4,5\}, \{1,4,5\}, \{1,5,6\}, \{1,6,7\}, \{1,2,7\}}$ and 
    
\begin{center}

    \tikzset{every picture/.style={line width=0.75pt}} 

    \begin{tikzpicture}[x=0.75pt,y=0.75pt,yscale=-1,xscale=1]
    
    \draw  [fill={rgb, 255:red, 155; green, 155; blue, 155 }  ,fill opacity=1 ] (195.33,100.17) -- (171.75,141.01) -- (124.58,141.01) -- (101,100.17) -- (124.58,59.32) -- (171.75,59.32) -- cycle ;
    \draw    (124.58,59.32) -- (148.17,100.17) ;
    \draw    (171.75,59.32) -- (148.17,100.17) ;
    \draw    (195.33,100.17) -- (148.17,100.17) ;
    \draw    (148.17,100.17) -- (171.75,141.01) ;
    \draw    (148.17,100.17) -- (124.58,141.01) ;
    \draw    (148.17,100.17) -- (101,100.17) ;
    
    \draw (139,173) node [anchor=north west][inner sep=0.75pt]   [align=left] {$\displaystyle \Delta $};
    \draw (156,82) node [anchor=north west][inner sep=0.75pt]   [align=left] {$\displaystyle 1$};
    \draw (172,40) node [anchor=north west][inner sep=0.75pt]   [align=left] {$\displaystyle 2$};
    \draw (197,80) node [anchor=north west][inner sep=0.75pt]   [align=left] {$\displaystyle 3$};
    \draw (173.75,144.01) node [anchor=north west][inner sep=0.75pt]   [align=left] {$\displaystyle 4$};
    \draw (114,144) node [anchor=north west][inner sep=0.75pt]   [align=left] {$\displaystyle 5$};
    \draw (90,102) node [anchor=north west][inner sep=0.75pt]   [align=left] {$\displaystyle 6$};
    \draw (115,40) node [anchor=north west][inner sep=0.75pt]   [align=left] {$\displaystyle 7$};

    \end{tikzpicture}
\end{center}
    $$
    E = \{e_i + e_j \in \R^7 \st \{i, j\} \text{ is an edge of $\Delta$}\}, \quad \text{ where $e_i$ denotes the standard canonical vector in $\R^7$}.
    $$
    Then the polytope $P$ that is the convex hull of $E$ is not a generalized permutohedron since 
    $$
        e_4 + e_5 = (0,0,0,1,1,0,0), \quad e_2 + e_7 = (0,1,0,0,0,0,1)
    $$
    forms an edge of $P$ and the direction of this edge is $e_4 + e_5 - e_2 - e_7$ which is not of the form $e_i - e_j$ for any $i, j$. 
\end{example}

\begin{remark}
    In~\cite{matroidpolytope}, the authors show that the convex hull of a subset of vertices of the hypersimplex is a generalized permutohedron if and only if it is the matroid basis polytope of a matroid. This characterization would also imply the polytope from~\cref{e:generalizedpermutohedron} is not a generalized permutohedron.
\end{remark}
\section{Further questions}\label{s:questions}

\cref{t:sdefectsimplex,t:triangle,t:nozero} along with \cref{e:symbolicdefectpolynomial} show that the coefficients of the second symbolic defect of $\Delta$ are sometimes equal to the $f$-vector of $\Delta$, with equality beginning at the number of triangles of $\Delta$. To see that this is not always true, take for example the graph $G$ below

\begin{center}

\tikzset{every picture/.style={line width=0.75pt}} 

\begin{tikzpicture}[x=0.75pt,y=0.75pt,yscale=-1,xscale=1]

\draw   (348.72,174.31) -- (324.37,249.21) -- (245.61,249.2) -- (221.28,174.28) -- (285.01,128) -- cycle ;
\draw    (221.28,174.28) -- (324.37,249.21) ;
\draw    (285.01,128) -- (245.61,249.2) ;
\draw [shift={(245.61,249.2)}, rotate = 108.01] [color={rgb, 255:red, 0; green, 0; blue, 0 }  ][fill={rgb, 255:red, 0; green, 0; blue, 0 }  ][line width=0.75]      (0, 0) circle [x radius= 3.35, y radius= 3.35]   ;
\draw [shift={(285.01,128)}, rotate = 108.01] [color={rgb, 255:red, 0; green, 0; blue, 0 }  ][fill={rgb, 255:red, 0; green, 0; blue, 0 }  ][line width=0.75]      (0, 0) circle [x radius= 3.35, y radius= 3.35]   ;
\draw    (285.01,128) -- (324.37,249.21) ;
\draw [shift={(324.37,249.21)}, rotate = 72.01] [color={rgb, 255:red, 0; green, 0; blue, 0 }  ][fill={rgb, 255:red, 0; green, 0; blue, 0 }  ][line width=0.75]      (0, 0) circle [x radius= 3.35, y radius= 3.35]   ;
\draw [shift={(285.01,128)}, rotate = 72.01] [color={rgb, 255:red, 0; green, 0; blue, 0 }  ][fill={rgb, 255:red, 0; green, 0; blue, 0 }  ][line width=0.75]      (0, 0) circle [x radius= 3.35, y radius= 3.35]   ;
\draw    (348.72,174.31) -- (245.61,249.2) ;
\draw    (221.28,174.28) -- (348.72,174.31) ;
\draw [shift={(348.72,174.31)}, rotate = 0.01] [color={rgb, 255:red, 0; green, 0; blue, 0 }  ][fill={rgb, 255:red, 0; green, 0; blue, 0 }  ][line width=0.75]      (0, 0) circle [x radius= 3.35, y radius= 3.35]   ;
\draw [shift={(221.28,174.28)}, rotate = 0.01] [color={rgb, 255:red, 0; green, 0; blue, 0 }  ][fill={rgb, 255:red, 0; green, 0; blue, 0 }  ][line width=0.75]      (0, 0) circle [x radius= 3.35, y radius= 3.35]   ;
\draw    (250.5,65) -- (285.01,128) ;
\draw [shift={(250.5,65)}, rotate = 61.29] [color={rgb, 255:red, 0; green, 0; blue, 0 }  ][fill={rgb, 255:red, 0; green, 0; blue, 0 }  ][line width=0.75]      (0, 0) circle [x radius= 3.35, y radius= 3.35]   ;
\draw    (322.5,65) -- (285.01,128) ;
\draw [shift={(322.5,65)}, rotate = 120.75] [color={rgb, 255:red, 0; green, 0; blue, 0 }  ][fill={rgb, 255:red, 0; green, 0; blue, 0 }  ][line width=0.75]      (0, 0) circle [x radius= 3.35, y radius= 3.35]   ;
\draw    (250.5,65) -- (322.5,65) ;
\draw [shift={(250.5,65)}, rotate = 0] [color={rgb, 255:red, 0; green, 0; blue, 0 }  ][fill={rgb, 255:red, 0; green, 0; blue, 0 }  ][line width=0.75]      (0, 0) circle [x radius= 3.35, y radius= 3.35]   ;
\draw    (417.5,174) -- (348.72,174.31) ;
\draw [shift={(417.5,174)}, rotate = 179.74] [color={rgb, 255:red, 0; green, 0; blue, 0 }  ][fill={rgb, 255:red, 0; green, 0; blue, 0 }  ][line width=0.75]      (0, 0) circle [x radius= 3.35, y radius= 3.35]   ;
\draw    (401.5,249) -- (324.37,249.21) ;
\draw [shift={(401.5,249)}, rotate = 179.84] [color={rgb, 255:red, 0; green, 0; blue, 0 }  ][fill={rgb, 255:red, 0; green, 0; blue, 0 }  ][line width=0.75]      (0, 0) circle [x radius= 3.35, y radius= 3.35]   ;
\draw    (174.5,249) -- (245.61,249.2) ;
\draw [shift={(174.5,249)}, rotate = 0.16] [color={rgb, 255:red, 0; green, 0; blue, 0 }  ][fill={rgb, 255:red, 0; green, 0; blue, 0 }  ][line width=0.75]      (0, 0) circle [x radius= 3.35, y radius= 3.35]   ;
\draw    (154.5,175) -- (221.28,174.28) ;
\draw [shift={(154.5,175)}, rotate = 359.39] [color={rgb, 255:red, 0; green, 0; blue, 0 }  ][fill={rgb, 255:red, 0; green, 0; blue, 0 }  ][line width=0.75]      (0, 0) circle [x radius= 3.35, y radius= 3.35]   ;

\end{tikzpicture}
\end{center}

The second symbolic defect polynomial of $\Ind(G)$ is given by $\mu_2(\Delta, t) = 58t^3 + 57t^4 + 60 t^5$, while its $f$-vector is given by $(1, 11, 38, 58, 41, 11)$. 

A natural question that arises is when does equality hold. Based on computational evidence and the examples above we ask the following question:

\begin{question}\label{q:forest}
    Let $G$ be a forest, $\Ind(G)$ its independence complex, and $(f_{-1}, f_0, f_1, \dots, f_d)$ the $f$-vector of $\Ind(G)$. Does the following equality always hold?
    $$
        \mu_2(\Ind(G), t) = f_2 t^3 + \dots + f_{d} t^{d + 1}
    $$
    In other words, if $f(G, t)$ is the independence polynomial of a forest $G$, does the following always hold?
    $$
        f(G, t) = 1 + |V(G)|t + \Big{(}\binom{n}{2} - |E(G)|\Big{)}t^2 + \mu_2(\Delta, t) 
    $$
\end{question}
 
A sequence $a_1, \dots, a_d$ of integers is said to be \textbf{unimodal} if there exists $1 \leq k \leq d$ such that $a_1 \leq \dots \leq a_k \geq \dots \geq a_d$. Although the previous example shows the sequence of coefficients of the second symbolic defect polynomial of a flag complex is not always unimodal, extensive computations lead us to the following question. 

\begin{question}\label{q:unimodalforest}
    When do the coefficients of the $m$-th symbolic defect polynomial of a complex $\Delta$ form a unimodal sequence with no internal zeros?

    More specifically, do the coefficients of the second symbolic defect polynomial of the independence complex of a forest form a unimodal sequence with no internal zeros?
\end{question}

Note that a positive answer to \cref{q:forest} and the second part of \cref{q:unimodalforest} together imply the conjecture by Alavi, Malde, Schwenk and Erd\"os~\cite{MR0944684} on the unimodality of the coefficients of the independence polynomial of forests.

We say a sequence of polynomials $\{\mu_m(\Delta, t)\}_m$ is \textbf{asymptotically quasi-unimodal} if there exists a $k$ such that the coefficients of $\mu_l(\Delta, t)$ form a unimodal sequence for every $l \gg 0$ such that $k \mid l$. In view of the results from \cite{MR3986423,oltsik} we ask the following question.

\begin{question}
    Let $\Delta$ be a simplicial complex. Is the sequence of symbolic defect polynomials $\{\mu_m(\Delta, t)\}_m$ asymptotically quasi-unimodal?
\end{question}

In \cref{t:forestslp} we showed $2$-dimensional simplicial trees satisfy the SLP in every odd characteristic. \cref{p:incidenceforest,hausel} can be used to show that many multiplication maps by a general linear form have full rank for arbitrary pure simplicial forests. We then ask the following.

\begin{question}
    Let $\Delta$ be a pure simplicial forest. Does $A(\Delta)$ satisfy the SLP in characteristic zero?
\end{question}
 
\cref{t:posmm}, the results in \cite{MR2487491} and the discussion in \cref{s:mvmm} lead us to the following question:

\begin{question}
    Let $\Delta$ be a pure $d$-dimensional simplicial complex. 
    
    \begin{enumerate}
        \item Is there a combinatorial interpretation to the sum
        $$
            \sum A_{c_0,\dots, c_{d - 1}}(\Delta) ?
        $$
        \item Let $A(i, \Delta) = A_{c_0,\dots, c_{d - 1}}(\Delta)$ where $c_i = n - 1$ and $c_j = 0$ for every $j \neq i$. Is the sequence
        $$
            A(0, \Delta), \dots, A(d - 1, \Delta)
        $$
        unimodal?
    \end{enumerate} 
\end{question}

\noindent
{\bf Acknowledgments.} I would like to thank Zaqueu Ramos for insightful conversations on the linear type property; Arvind Kumar, Susan Cooper and Susan Morey for pointing me to references on symbolic powers and my supervisor Sara Faridi for very useful discussions which improved the presentation of the paper. Computations for this project were made using the NautyGraphs~\cite{NautyGraphsSource}, SymbolicPowers~\cite{SymbolicPowersSource,SymbolicPowersArticle}, ReesAlgebra~\cite{ReesAlgebraSource,ReesAlgebraArticle} and SimplicialComplexes~\cite{SimplicialComplexesSource} packages from Macaulay2 \cite{M2}, and SageMath~\cite{sagemath}.

\bibliographystyle{plain}

\begin{bibdiv}
    \begin{biblist}
    
    \bib{MR0944684}{incollection}{
          author={Alavi, Yousef},
          author={Malde, Paresh~J.},
          author={Schwenk, Allen~J.},
          author={Erd\"{o}s, Paul},
           title={The vertex independence sequence of a graph is not constrained},
            date={1987},
          volume={58},
           pages={15\ndash 23},
            note={Eighteenth Southeastern International Conference on
      Combinatorics, Graph Theory, and Computing (Boca Raton, Fla., 1987)},
    }
    
    \bib{alisara}{incollection}{
          author={Alilooee, Ali},
          author={Faridi, Sara},
           title={When is a squarefree monomial ideal of linear type?},
            date={2015},
       booktitle={Commutative algebra and noncommutative algebraic geometry.},
          series={Math. Sci. Res. Inst. Publ.},
       publisher={Cambridge Univ. Press, New York},
           pages={1\ndash 18},
    }
    
    \bib{MR3802324}{article}{
          author={Alilooee, Ali},
          author={Soprunov, Ivan},
          author={Validashti, Javid},
           title={Generalized multiplicities of edge ideals},
            date={2018},
            ISSN={0925-9899,1572-9192},
         journal={J. Algebraic Combin.},
          volume={47},
          number={3},
           pages={441\ndash 472},
             url={https://doi.org/10.1007/s10801-017-0781-3},
    }
    
    \bib{MR4064768}{article}{
          author={Ardila, Federico},
          author={Castillo, Federico},
          author={Eur, Christopher},
          author={Postnikov, Alexander},
           title={Coxeter submodular functions and deformations of {C}oxeter
      permutahedra},
            date={2020},
            ISSN={0001-8708,1090-2082},
         journal={Adv. Math.},
          volume={365},
           pages={107039, 36},
             url={https://doi.org/10.1016/j.aim.2020.107039},
    }
    
    \bib{MR1013569}{book}{
          author={Berge, Claude},
           title={Hypergraphs},
          series={North-Holland Mathematical Library},
       publisher={North-Holland Publishing Co., Amsterdam},
            date={1989},
          volume={45},
            ISBN={0-444-87489-5},
            note={Combinatorics of finite sets, Translated from the French},
    }
    
    \bib{MR3104513}{article}{
          author={Biermann, Jennifer},
          author={Van~Tuyl, Adam},
           title={Balanced vertex decomposable simplicial complexes and their
      {$h$}-vectors},
            date={2013},
            ISSN={1077-8926},
         journal={Electron. J. Combin.},
          volume={20},
          number={3},
           pages={Paper 15, 12},
             url={https://doi.org/10.37236/2552},
    }
    
    \bib{MR4413366}{article}{
          author={Bivi\`a-Ausina, Carles},
          author={Monta\~{n}o, Jonathan},
           title={Analytic spread and integral closure of integrally decomposable
      modules},
            date={2022},
            ISSN={0027-7630,2152-6842},
         journal={Nagoya Math. J.},
          volume={245},
           pages={166\ndash 191},
             url={https://doi.org/10.1017/nmj.2020.35},
    }
    
    \bib{boijthesis}{article}{
          author={Boij, Mats},
           title={Artin level algebras.},
            date={1996},
         journal={Doctoral Dissertation, Royal Institute of Technology (KTH)},
    }
    
    \bib{MR4157582}{article}{
          author={Castillo, Federico},
          author={Cid-Ruiz, Yairon},
          author={Li, Binglin},
          author={Monta\~{n}o, Jonathan},
          author={Zhang, Naizhen},
           title={When are multidegrees positive?},
            date={2020},
            ISSN={0001-8708,1090-2082},
         journal={Adv. Math.},
          volume={374},
           pages={107382, 34},
             url={https://doi.org/10.1016/j.aim.2020.107382},
    }
    
    \bib{MR3589840}{article}{
          author={Cooper, Susan~M.},
          author={Embree, Robert J.~D.},
          author={H\`a, Huy~T\`ai},
          author={Hoefel, Andrew~H.},
           title={Symbolic powers of monomial ideals},
            date={2017},
            ISSN={0013-0915,1464-3839},
         journal={Proc. Edinb. Math. Soc. (2)},
          volume={60},
          number={1},
           pages={39\ndash 55},
             url={https://doi.org/10.1017/S0013091516000110},
    }
    
    \bib{whiskered2}{unpublished}{
          author={Cooper, {Susan M.}},
          author={Faridi, Sara},
          author={Holleben, Thiago},
          author={Nicklasson, Lisa},
          author={{Van Tuyl}, Adam},
           title={Pseudomanifolds, generalized whiskerings and the weak {L}efschetz
      property},
            note={Work in progress},
    }
    
    \bib{cooper2023weak}{article}{
          author={Cooper, {Susan M.}},
          author={Faridi, Sara},
          author={Holleben, Thiago},
          author={Nicklasson, Lisa},
          author={{Van Tuyl}, Adam},
           title={The weak {L}efschetz property of whiskered graphs},
            date={2023},
         journal={arXiv preprint arXiv:2306.04393},
    }
    
    \bib{symbolicdefectedgeideals}{unpublished}{
          author={Cooper, {Susan M.}},
          author={Reimer, Tessa},
           title={The symbolic defect sequence of edge ideals},
            note={Work in progress},
    }
    
    \bib{daonair}{article}{
          author={Dao, Hailong},
          author={Nair, Ritika},
           title={On the {L}efschetz property for quotients by monomial ideals
      containing squares of variables},
            date={2024},
            ISSN={0092-7872,1532-4125},
         journal={Comm. Algebra},
          volume={52},
          number={3},
           pages={1260\ndash 1270},
             url={https://doi.org/10.1080/00927872.2023.2260012},
    }
    
    \bib{MR3986423}{article}{
          author={Drabkin, Benjamin},
          author={Guerrieri, Lorenzo},
           title={Asymptotic invariants of ideals with {N}oetherian symbolic {R}ees
      algebra and applications to cover ideals},
            date={2020},
            ISSN={0022-4049,1873-1376},
         journal={J. Pure Appl. Algebra},
          volume={224},
          number={1},
           pages={300\ndash 319},
             url={https://doi.org/10.1016/j.jpaa.2019.05.008},
    }
    
    \bib{ReesAlgebraSource}{misc}{
          author={Eisenbud, David},
          author={Taylor, Amelia},
          author={Popescu, Sorin},
          author={Stillman, Michael~E.},
           title={{ReesAlgebra: A \emph{Macaulay2} package. Version~2.3}},
             how={A \emph{Macaulay2} package available at
      \url{https://github.com/Macaulay2/M2/tree/master/M2/Macaulay2/packages}},
    }
    
    \bib{ReesAlgebraArticle}{article}{
          author={Eisenbud, David},
          author={Taylor, Amelia},
          author={Popescu, Sorin},
          author={Stillman, Michael~E.},
           title={{The ReesAlgebra package in \emph{Macaulay2}}},
            date={2018},
         journal={The Journal of Software for Algebra and Geometry},
          volume={8},
    }
    
    \bib{faridifacet}{article}{
          author={Faridi, Sara},
           title={The facet ideal of a simplicial complex},
            date={2002},
            ISSN={0025-2611,1432-1785},
         journal={Manuscripta Math.},
          volume={109},
          number={2},
           pages={159\ndash 174},
             url={https://doi.org/10.1007/s00229-002-0293-9},
    }
    
    \bib{faridiforest}{article}{
          author={Faridi, Sara},
           title={Cohen-{M}acaulay properties of square-free monomial ideals},
            date={2005},
            ISSN={0097-3165,1096-0899},
         journal={J. Combin. Theory Ser. A},
          volume={109},
          number={2},
           pages={299\ndash 329},
             url={https://doi.org/10.1016/j.jcta.2004.09.005},
    }
    
    \bib{MR3906569}{article}{
          author={Galetto, Federico},
          author={Geramita, Anthony~V.},
          author={Shin, Yong-Su},
          author={Van~Tuyl, Adam},
           title={The symbolic defect of an ideal},
            date={2019},
            ISSN={0022-4049,1873-1376},
         journal={J. Pure Appl. Algebra},
          volume={223},
          number={6},
           pages={2709\ndash 2731},
             url={https://doi.org/10.1016/j.jpaa.2018.11.019},
    }
    
    \bib{matroidpolytope}{article}{
          author={Gel\cprime~fand, I.~M.},
          author={Goresky, R.~M.},
          author={MacPherson, R.~D.},
          author={Serganova, V.~V.},
           title={Combinatorial geometries, convex polyhedra, and {S}chubert
      cells},
            date={1987},
            ISSN={0001-8708},
         journal={Adv. in Math.},
          volume={63},
          number={3},
           pages={301\ndash 316},
             url={https://doi.org/10.1016/0001-8708(87)90059-4},
    }
    
    \bib{M2}{misc}{
          author={Grayson, Daniel~R.},
          author={Stillman, Michael~E.},
           title={Macaulay2, a software system for research in algebraic geometry},
             how={Available at \url{http://www.math.uiuc.edu/Macaulay2/}},
    }
    
    \bib{SymbolicPowersSource}{misc}{
          author={Grifo, Eloisa},
           title={{SymbolicPowers: A \emph{Macaulay2} package. Version~2.0}},
             how={A \emph{Macaulay2} package available at
      \url{https://github.com/Macaulay2/M2/tree/master/M2/Macaulay2/packages}},
    }
    
    \bib{SymbolicPowersArticle}{article}{
          author={Grifo, Eloisa},
           title={{Calculations involving symbolic powers}},
            date={2019},
         journal={The Journal of Software for Algebra and Geometry},
          volume={9},
    }
    
    \bib{hausel}{article}{
          author={Hausel, Tam\'{a}s},
           title={Quaternionic geometry of matroids},
            date={2005},
            ISSN={1895-1074,1644-3616},
         journal={Cent. Eur. J. Math.},
          volume={3},
          number={1},
           pages={26\ndash 38},
             url={https://doi.org/10.2478/BF02475653},
    }
    
    \bib{lefschetzmm}{article}{
          author={Holleben, Thiago},
           title={{T}he weak {L}efschetz property and mixed multiplicities of
      monomial ideals},
            date={2023},
         journal={arxiv preprint arXiv:2306.13274},
    }
    
    \bib{phd}{unpublished}{
          author={Holleben, Thiago},
           title={The algebra and combinatorics of monomial ideals},
            date={In preparation},
            note={Thesis (Ph.D.)--Dalhousie University},
    }
    
    \bib{NautyGraphsSource}{misc}{
          author={II, David~Cook},
           title={{NautyGraphs: interface to nauty (Graphs fork).
      Version~1.4.3.1}},
             how={A \emph{Macaulay2} package available at
      \url{https://github.com/Macaulay2/M2/tree/master/M2/Macaulay2/packages}},
    }
    
    \bib{katz2023matroidal}{article}{
          author={Katz, Eric},
          author={Kutler, Max},
           title={Matroidal mixed eulerian numbers},
            date={2023},
         journal={arXiv preprint arXiv:2305.19095},
    }
    
    \bib{kling2023strong}{article}{
          author={Kling, Filip~Jonsson},
           title={The strong {L}efschetz property for quadratic reverse
      lexicographic ideals},
            date={2023},
         journal={arXiv preprint arXiv:2310.15611},
    }
    
    \bib{kumarsymbolicdefect}{article}{
          author={Kumar, Arvind},
          author={Kumar, Rajiv},
           title={Regularity comparison of symbolic powers, integral closure of
      powers and powers of edge ideals},
            date={2021},
         journal={arXiv preprint arXiv:2108.08609},
    }
    
    \bib{laplace}{article}{
          author={Laplace, M.~De},
           title={Oeuvres complètes},
            date={1886},
         journal={réédité par Gauthier-Villars},
          volume={7},
    }
    
    \bib{MR4556315}{article}{
          author={Mandal, Mousumi},
          author={Pradhan, Dipak~Kumar},
           title={Symbolic defects of edge ideals of unicyclic graphs},
            date={2023},
            ISSN={0219-4988,1793-6829},
         journal={J. Algebra Appl.},
          volume={22},
          number={5},
           pages={Paper No. 2350099, 30},
             url={https://doi.org/10.1142/S0219498823500998},
    }
    
    \bib{rankanalyticspread}{article}{
          author={Mart\'{\i}nez-Bernal, Jos\'{e}},
          author={Morey, Susan},
          author={Villarreal, {Rafael H.}},
           title={Associated primes of powers of edge ideals},
            date={2012},
            ISSN={0010-0757},
         journal={Collect. Math.},
          volume={63},
          number={3},
           pages={361\ndash 374},
      url={https://doi-org.ezproxy.library.dal.ca/10.1007/s13348-011-0045-9},
    }
    
    \bib{tour}{article}{
          author={Migliore, Juan},
          author={Nagel, Uwe},
           title={Survey article: a tour of the weak and strong {L}efschetz
      properties},
            date={2013},
            ISSN={1939-0807,1939-2346},
         journal={J. Commut. Algebra},
          volume={5},
          number={3},
           pages={329\ndash 358},
             url={https://doi.org/10.1216/JCA-2013-5-3-329},
    }
    
    \bib{mns}{article}{
          author={Migliore, Juan},
          author={Nagel, Uwe},
          author={Schenck, Hal},
           title={The weak {L}efschetz property for quotients by quadratic
      monomials},
            date={2020},
            ISSN={0025-5521},
         journal={Math. Scand.},
          volume={126},
          number={1},
           pages={41\ndash 60},
      url={https://doi-org.ezproxy.library.dal.ca/10.7146/math.scand.a-116681},
    }
    
    \bib{wlpmon}{article}{
          author={Migliore, {Juan C.}},
          author={Mir\'{o}-Roig, {Rosa M.}},
          author={Nagel, Uwe},
           title={Monomial ideals, almost complete intersections and the weak
      {L}efschetz property},
            date={2011},
            ISSN={0002-9947},
         journal={Trans. Amer. Math. Soc.},
          volume={363},
          number={1},
           pages={229\ndash 257},
      url={https://doi-org.ezproxy.library.dal.ca/10.1090/S0002-9947-2010-05127-X},
    }
    
    \bib{MR2932582}{incollection}{
          author={Morey, Susan},
          author={Villarreal, Rafael~H.},
           title={Edge ideals: algebraic and combinatorial properties},
            date={2012},
       booktitle={Progress in commutative algebra 1},
       publisher={de Gruyter, Berlin},
           pages={85\ndash 126},
    }
    
    \bib{MR4621503}{article}{
          author={Nadeau, Philippe},
          author={Tewari, Vasu},
           title={Remixed {E}ulerian numbers},
            date={2023},
            ISSN={2050-5094},
         journal={Forum Math. Sigma},
          volume={11},
           pages={Paper No. e65, 26},
             url={https://doi.org/10.1017/fms.2023.57},
    }
    
    \bib{nguyen2023weak}{article}{
          author={Nguyen, Hop~D.},
          author={Tran, Quang~Hoa},
           title={The weak {L}efschetz property of artinian algebras associated to
      paths and cycles},
            date={2023},
         journal={arxiv preprint arXiv:2310.14368},
    }
    
    \bib{oltsik}{article}{
          author={Oltsik, Benjamin~R.},
           title={Symbolic defect of monomial ideals},
            date={2023},
         journal={arXiv preprint arXiv:2310.12280},
    }
    
    \bib{MR2487491}{article}{
          author={Postnikov, Alexander},
           title={Permutohedra, associahedra, and beyond},
            date={2009},
            ISSN={1073-7928,1687-0247},
         journal={Int. Math. Res. Not. IMRN},
          number={6},
           pages={1026\ndash 1106},
             url={https://doi.org/10.1093/imrn/rnn153},
    }
    
    \bib{SimplicialComplexesSource}{misc}{
          author={Smith, Gregory~G.},
          author={Hersey, Ben},
          author={Zotine, Alexandre},
           title={{SimplicialComplexes: exploring abstract simplicial complexes
      within commutative algebra. Version~2.0}},
             how={A \emph{Macaulay2} package available at
      \url{https://github.com/Macaulay2/M2/tree/master/M2/Macaulay2/packages}},
    }
    
    \bib{sagemath}{manual}{
          author={{The Sage Developers}},
           title={{S}agemath, the {S}age {M}athematics {S}oftware {S}ystem
      ({V}ersion 10.2)},
            date={2023-12-03},
            note={{\tt https://www.sagemath.org}},
    }
    
    \bib{MR1812818}{article}{
          author={Trung, Ng\^{o}~Vi\^{e}t},
           title={Positivity of mixed multiplicities},
            date={2001},
            ISSN={0025-5831,1432-1807},
         journal={Math. Ann.},
          volume={319},
          number={1},
           pages={33\ndash 63},
             url={https://doi.org/10.1007/PL00004429},
    }
    
    \bib{MR2320648}{article}{
          author={Trung, Ngo~Viet},
          author={Verma, Jugal},
           title={Mixed multiplicities of ideals versus mixed volumes of
      polytopes},
            date={2007},
            ISSN={0002-9947,1088-6850},
         journal={Trans. Amer. Math. Soc.},
          volume={359},
          number={10},
           pages={4711\ndash 4727},
             url={https://doi.org/10.1090/S0002-9947-07-04054-8},
    }
    
\end{biblist}
\end{bibdiv}

\end{document}